\newtheorem{theorem}{Theorem}[section]
\newtheorem{lemma}{Lemma}[section]
\newtheorem{remark}{Remark}[section]
\renewcommand{\theequation}{\arabic{section}.\arabic{equation}}
\renewcommand{\thelemma}{\arabic{section}.\arabic{lemma}}
\newcommand\R{\mathbb{R}}
\newcommand\N{\mathbb{N}}
\newcommand\Z{\mathbb{Z}}
\newcommand{\tnmat}{t^{n}_{\ell}(E)}
\newcommand{\lmax}{L}
\newcommand{\sumn}{\sum_{n'=1}^{N}}
\newcommand{\sumnn}{\sum_{n'\ne n}}
\newcommand{\gamn}{\Gamma_{n}}
\newcommand{\omen}{\Omega_{n}}
\newcommand{\omeout}{\Omega^{\rm I}}
\newcommand{\laml}{E_{\lmax}}
\newcommand{\trial}{\mathcal{U}_{\lmax}(E_{\lmax})}
\newcommand{\test}{\mathcal{V}_{\lmax}(E_{\lmax})}
\newcommand{\mollf}{\mathcal{M}}
\newcommand{\ltwo}{L^2(\mathbb{R}^3)}
\newcommand{\hone}{H^1(\mathbb{R}^3)}
\newcommand{\hdel}{H^{\delta}(\mathbb{R}^3)}
\newcommand{\hout}{H^1(\omeout)}
\newcommand{\hn}{H^1(\omen)}
\newcommand{\ujbar}{\bar{u}_L}
\newcommand{\wlz}{\mathcal{W}_L(z)}
\newcommand{\ulz}{\mathcal{U}_L(z)}
\newcommand{\vlz}{\mathcal{V}_L(z)}
\newcommand{\rhat}{\bm{\hat{r}}}
\newcommand{\ylm}{Y_{\ell m}(\hat{\bm{r}})}
\newcommand{\sumlm}{\sum_{\ell m}}
\newcommand{\sumlL}{\sum_{\ell\le L}}
\newcommand{\tl}{t_{\ell}}
\newcommand{\struc}{g_{\ell m, \ell' m'}^{nn'}}
\newcommand{\ul}{u_{\lmax}}
\newcommand{\dd}{{\rm d}}
\newcommand{\vr}{\bm{r}}
\title{Numerical Analysis of the Multiple Scattering Theory for Electronic Structure Calculations\thanks{
X. Li and H. Chen's work were partially supported by National Key R\&D Program of China 2019YFA0709600, 2019YFA0709601. 
H. Chen's work was partially supported by the Natural Science Foundation of China under grant 11971066.
X. Gao's work was partially supported by the Science Challenge Project under Grant TZ2018002.
}
}
\author{Xiaoxu Li
\thanks{
{\tt xiaoxuli@bnu.edu.cn},
College of Education for the Future, Beijing Normal University.
},
~Huajie Chen
\thanks{
{\tt chen.huajie@bnu.edu.cn},
School of Mathematical Sciences, Beijing Normal University.
}
~and Xingyu Gao
\thanks{
{\tt gao\_xingyu@iapcm.ac.cn},
Laboratory of Computational Physics, Institute of Applied Physics and Computational Mathematics.
}
}
\date{}
\begin{document}
\maketitle

\begin{abstract}
The multiple scattering theory (MST) is one of the most widely used methods in electronic structure calculations.
It features a perfect separation between the atomic configurations and site potentials, and hence provides an efficient way to simulate defected and disordered systems.
This work studies the MST methods from a numerical point of view and shows the convergence with respect to the truncation of the angular momentum summations, which is a fundamental approximation parameter for all MST methods.
We provide both rigorous analysis and numerical experiments to illustrate the efficiency of the MST methods within the angular momentum representations. 
\end{abstract}


\section{Introduction}
\label{sec:intro} 
\setcounter{equation}{0}

Multiple scattering theory (MST) was initiated to study the propagation of heat and electricity through inhomogeneous media \cite{rayleigh92}. It allows a unified treatment of many partial differential equations of modern physics and engineering, and has been applied to
both classical and quantum physics, in particular, the electronic structure calculations of solid states.
Along with the framework of density functional theory (DFT) \cite{hohenberg64, kohn65}, MST has been successfully used to simulate the electronic structure of many systems, including bulk materials \cite{asato99}, surfaces \cite{galanakis02}, impurities \cite{nonas98} and alloys \cite{modinos00}.

The idea of the MST methods is to divide the system into non-overlapping subregions, solve each of the smaller parts separately by quantum scattering theory, and then assembles the partial solutions of the pieces into an overall solution. 
A key feature of the MST method is that it affords a complete separation of the potential aspects from the structure aspects.
Therefore, this approach is extremely potential to develop parallel algorithms for large and complex systems with substitutional defects, for example, the alloys \cite{gonis00,thiess12}.
Another feature of the MST method is that it is a Green's function method.
Instead of solving the eigenvalue problems in standard electronic structure calculations, it can calculate the single-particle Green's function directly. 
In the disordered systems, the statistical average of the Green's function is the right object used to calculate the averaged physical properties, whereas the average of the wave function is not well-defined and cannot be directly related to the physical observables \cite{ebert11,ruban08}.

The idea of MST to calculate the electronic structures was first known as the KKR method \cite{kohn54,korringa47}, formulated by the so-called KKR secular equation, which can determine the eigenvalues and eigenstates of a many-particle system
(see Section \ref{sec:mst}).
Due to the huge computational cost, solving the original KKR equation is not widely used in practice nowadays.
Nevertheless, it still founds the framework of the modern MST methods, by giving the so-called scattering path matrix, which is a key quantity to construct the Green's function in almost all MST methods.
We will only focus on the KKR method in this paper, but our theory (the error estimate of the angular-momentum truncations) is valid for all other practical MST methods.

The MST methods are always formulated and implemented within the angular-momentum representations. 
Therefore the accuracy of the MST solutions depends on the truncation of the angular-momentum summations. 
The convergence and error analysis of the solutions with respect to this truncation are thus crucial, which determine the reliability and efficiency of the MST simulation results.
To our best knowledge, there is no existing work on rigorous mathematical analysis for the MST methods. 
We refer to \cite[Section 5.10]{bulter91} for a non-rigorous discussion.
%
The difficulties of analysis for the MST methods lie in two aspects.
First, unlike the traditional discretization methods for electronic structure calculations (such as plane waves \cite{cances12}, finite elements \cite{chen13}, atomic orbitals \cite{chen15b,herring40} and augmented plane waves \cite{chen15a,singh06}), the MST methods are more of a perturbation method rather than a variational method, for which most of the standard approaches in numerical analysis fail. 
Second, the basic KKR secular equation is a highly nonlinear problem, where the coefficients in the equation have very complex dependence on the the energy to be solved.

The purpose of this paper is to derive an {\em a priori} error estimate of the MST solutions (i.e. the solutions of KKR secular equation) with respect to the angular-momentum truncations. 
We only consider linear Schr\"{o}dinger-type eigenvalue problems in this paper, whereas the nonlinear problems deserve to be investigated in a separate work.
The main result is the following superalgebraic convergence rate of the eigenpairs under certain assumptions (see Section \ref{sec:num-anal} for details)
\begin{eqnarray}
|E_{\lmax}-E| + \|u_{\lmax}-u\| \le C_{s} \lmax^{1-s} \qquad\forall~s>1
\end{eqnarray}
where $E$ and $u$ are the energies (eigenvalues) and wave functions (eigenfunctions), $\lmax$ is the angular-momentum cut-off, and the constant $C_{s}$ only depends on the system and $s$.
The convergence result also works for other MST formulations (that are equivalent to the secular equation in math, but different in practice \cite{gonis00}).

\vskip 0.4cm
\noindent{\bf Outline.}
The remainder of this paper is organized as follows. 
In Section \ref{sec:mst}, we give a brief overview of the multiple scattering theory. In particular, we consider the muffin-tin potentials and derive the KKR secular equation for linear Schr\"{o}dinger-type eigenvalue problems within the angular-momentum representations.
In Section \ref{sec:num-anal}, we provide a convergence analysis and an {\em a priori} error estimate of MST approximations (the solutions of the secular equations) with respect to the angular-momentum truncations.
In Section \ref{sec:numerics}, we present some numerical experiments to support the theory.
In Section \ref{sec:conclusions}, we give some concluding remarks and future perspectives.
For the sake of clarity of the presentations, the proofs and some supplementary details are put in the appendices.

\vskip 0.4cm
\noindent{\bf Notations.}
Throughout the paper, we use the symbol $C$ to denote a generic positive constant that may change from one line to the next. 
In the error estimates, $C$ will always remain independent on the discretization parameters or the choice of test functions. The dependence of $C$ on model parameters will normally be clear from the context or stated explicitly.

Our analysis requires a number of notations for domain decompositions and special functions, and we list them in the following table with brief summaries. 
We also list the symbols that are frequently used in the paper.

\begin{longtable}[htbp!]{llll}
\toprule
\bf{General notations} & & \\
\midrule
$\bm{r},~r,~\rhat$ & $\bm{r}\in\R^3$ is the spatial coordinate, $r=|\bm{r}|$ and $\rhat=\bm{r}/r$ & \\
$\bm{r}_{n}$ & $\bm{r}_{n}=\bm{r}-\bm{R}_n$ with $\bm{R}_{n}$ the position of the $n$-th atom \\
$\ell,~m$ & angular-momenta and magnetic quantum numbers & \\
$L$ & angular-momentum truncation & \\
$z,~\sqrt{z}$ & $z$ is an energy parameter,~ $\sqrt{z} := i\sqrt{|z|}$ if $z<0$ & \\
$E,~u(\bm{r})$ & bound state energy and wave function& \\
$E_{L},~u_{L}(\bm{r})$ & approximated bound state energy and wave function& \\[1ex]
$\sumlm,~\sumlL $ & 
$\sumlm := \sum_{\ell=0}^{\infty} \sum_{m=-\ell}^{\ell},~~
\sumlL := \sum_{\ell=0}^{L} \sum_{m=-\ell}^{\ell}$ &\\[1ex]
\midrule
\bf{Domain-related} & \hskip -2.5cm (see Figure \ref{fig-domain}) & \\
\midrule
$\Omega^{\rm{I}}$ & interstitial region & \\
$\bm{R}_{n}$ & position of the $n$-th nucleus & \\
$\Omega_{n},~R_{n},~\Gamma_{n}$ & atomic sphere centered at $\bm{R}_{n}$, its radius, and its surface & \\
$\Omega^{\rm{N}}_{n}$ & a neighbouring region around $\Omega_n$ (see \eqref{ring_domain}) & \\
$B_{n},~r^{\rm b}_n$ & a ring surrounding $\Omega_n$ and its width (see page \pageref{Bn}) & \\
\midrule
\bf{Function-related} & & \\
\midrule
$\ylm$ & spherical harmonic functions &\\
$j_{\ell}(\bm{r}),~n_{\ell}(\bm{r}),~h_{\ell}(\bm{r})$ & spherical Bessel, Neumann, and Hankel functions & \\
$J_{\ell m}(\bm{r};z),~N_{\ell m}(\bm{r};z),~H_{\ell m}(\bm{r};z)$ & $j_{\ell}(\sqrt{z}\bm{r}),~n_{\ell}(\sqrt{z}\bm{r}),~h_{\ell}(\sqrt{z}\bm{r})$ times $\ylm$ (see \eqref{abbr-hankel})\\
$\chi_{\ell}(r;z)$ & solution of the radial Schr\"{o}dinger equation (see \eqref{radial-schrodinger}) & \\
$\zeta^n_{\ell m}(\bm{r}_{n};z)$ & wave function in $\Omega_n$ with energy parameter $z$ (see \eqref{radial-component}) & \\
$V(\bm{r})$ & potential of the system & \\
$V_{0}(r)$ & a spherically symmetric potential centred at origin & \\
\midrule
\bf{MST-related} & & \\
\midrule
$\tl(z)$ & $t$-matrix (see \eqref{t-matrix})\\
$\struc(z)$ & structure constants (see \eqref{structure-constant}) & \\
$\ulz$,~$\vlz$ & functional spaces (see \eqref{trial-space}, \eqref{test-space}) & \\
$\phi_{\ell m,\lmax}^{n}(\bm{r};z)$, $\varphi_{\ell m,\lmax}^{n}(\bm{r};z)$ & basis functions of $\ulz$ and $\vlz$ (see \eqref{trial-basis}, \eqref{test-basis}) & \\
$\mollf$ & a map from $\ulz$ to $\vlz$ (see \eqref{mollify})& \\
$\eta_n(r)$ & a cut-off function on $B_n$ (see \eqref{cut-off-eq})& \\
\bottomrule
\end{longtable}
%
%
\begin{figure}[htb]
	\begin{center}
	\begin{tikzpicture}[scale=0.8]
	\draw [loosely dashed, line width=2pt, fill=gray!20] (1,1) circle(3.7);
	\draw [line width=2pt, red!70, fill=white] (1,1) circle (1);
	\filldraw (1,1) circle (1.5pt);
	\node at (1.4,1) {$\bm{R}_{1}$};
	\node at (0.1,1) {$\Gamma_{1}$};
	\node at (1,0.4) {$\Omega_{1}$};
	\node [scale=1.5] at (0.8,-1.5) {$\Omega^{N}_{1}$};
		
	\draw [densely dotted, line width=2pt, fill=gray!50] (4,3) circle(2);
	\draw [line width=2pt, red!70, fill=white] (4,3) circle (1.3);
	\filldraw (4,3) circle (1.5pt);
	\node at (4.4,3) {$\bm{R}_{2}$};
	\node at (2.8,3) {$\Gamma_{2}$};
	\node at (4,2.4) {$\Omega_{2}$};
	\node at (4,1.3) {$B_{2}$};
		
	\draw [densely dotted, line width=2pt, fill=gray!50] (-4,4) circle(2.7);
	\draw [line width=2pt, red!70, fill=white] (-4,4) circle (2);
	\filldraw (-4,4) circle (1.5pt);
	\node at (-3.6,4) {$\bm{R}_{3}$};
	\node at (-5.9,4) {$\Gamma_{3}$};
	\node at (-4,3.4) {$\Omega_{3}$};
	\filldraw (-4,4) circle (1.5pt);
	\node at (-4,1.6) {$B_{3}$};
		
	\node [scale=1.5] at (-4,0) {$\Omega^{\rm{I}}$};
	\end{tikzpicture}
	\caption{Decomposition of the space into the atomic spheres and interstitial region.}
	\label{fig-domain}
	\end{center}
\end{figure}
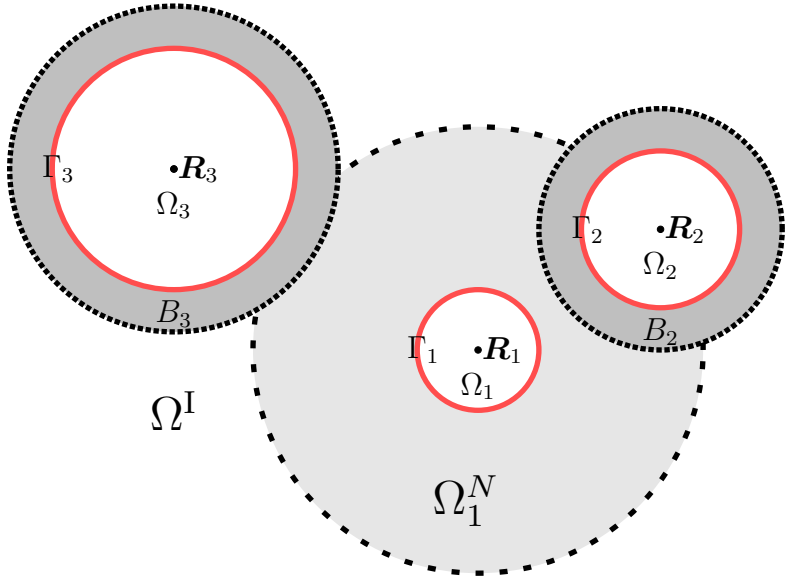


\section{Multiple scattering theory}
\label{sec:mst}
\setcounter{equation}{0}

In this section, we will apply multiple scattering theory to a Schr\"{o}dinger-type equation and derive the KKR secular equation. 
Consider the following linear eigenvalue problem
\begin{eqnarray}
\label{schrodinger}
\Big(-\Delta + V({\bm r})\Big) u({\bm r}) = E u({\bm r}) \qquad \bm{r}\in\mathbb{R}^3,
\end{eqnarray}
where $V$ is the potential of the system,
$u$ is subject to the boundary condition $|u({\bm r})|\rightarrow 0$ as $|\vr|\rightarrow \infty$ (such that it is a bound state), and $E$ is the corresponding energy.
In the following, we will focus on the case when $V$ is the so-called muffin-tin potentials, which are confined in non-overlapping spheres $\Omega_n~(n=1,\cdots,N)$ centered at atomic sites $\bm{R}_n$ with radius $R_n$.
%
More precisely, the potential takes the form
\begin{eqnarray}
\label{mt-potential}
V({\bm r}) = \sum_{n=1}^{N} v_{n}\big(|{\bm r}-{\bm R}_n|\big) \qquad \bm{r}\in\mathbb{R}^3,
\end{eqnarray}
with $v_n(r) \in C^{\infty}(0,\infty)$ and $v_n(r)=0$ when $r> R_n$. 
Here the potential has been assumed to be spherically symmetric within each atomic sphere only for simplicity of presentations. 
The analysis in this paper can be easily extended to general potentials that are non-spherically symmetric in the atomic spheres (see \ref{append-asym}).

The confinement that the potential is supported in atomic spheres is necessary for our analysis, which  greatly simplifies the expressions of free electron propagator in the angular-momentum expansions. 
From a practical point of view, the MST algorithms can be generalized to arbitrary potentials by replacing the atomic spheres $\Omega_n$ with the so-called space-filling cells (e.g. the Voronoi cell decompositions) \cite{gonis00}, for which the mathematical formulations and numerical analysis are far more complicated. 
We will study the general cases (without interstitial region) in a separate work.
We remark that the muffin-tin potential approximations not only allow dramatic simplifications for theoretical analysis and numerical algorithms, but also make very successful applications to many physical systems like metals and alloys \cite{martin05,morruzi78} where the use of the theory has continued until the present.

In our analysis, we will need to assume some regularity of the potential: 
$V\in L^{2}(\mathbb{R}^3)$ and $V|_{\Omega_n}\in C^{\infty}((0,R_n)\times\mathbb{S}^2)$.
%
This is a mild assumption that can be satisfied in most of the physical problems of practical interests.
For example, if the muffin-tin potential has singular Coulomb potential around the atomic sites, say $v_n(r)\approx C r^{-1}$ near the origin, then the total potential $V$ satisfies this condition.
It then follows from \cite[p.119]{reed72} that $-\Delta+V$ has continuous spectrum on $[0,+\infty)$ and discrete spectrum below zero.
We emphasize that we only consider the discrete spectrum $E<0$ in this paper, which corresponds to the bound states of the system \eqref{schrodinger}. 

The approach of multiple scattering for solving \eqref{schrodinger} is usually formulated by two steps: 
(i) determining the relationship between the free electron gas and the system perturbed by a single atomic potential, and (ii) deriving a secular equation that connects the amplitudes of incoming and outgoing waves at multiple atomic sites.

\subsection{Single-site scattering}
\label{subsec:single-scatter}

Consider the following single-site scattering problem with a fixed energy parameter $z<0$
\begin{eqnarray}
\label{single-schrodinger}
\Big(-\Delta + V_{0}(r)-z\Big) \psi({\bm r})= 0 \qquad \bm{r}\in\mathbb{R}^3,
\end{eqnarray}
where the potential is spherically symmetric and can be written by a radial function $V_{0}$ satisfying $V_{0}(r)=0$ for $r\geq R_0$.
Note that no (far-field) boundary condition is imposed on this equation, since we only concern the relationship between the incoming and outgoing waves by viewing the equation as a single potential scattering procedure.
The solution $\psi$ depends on parameter $z$, but we suppress this dependency throughout this paper for simplicity of presentations.

The solution $\psi$ can be written in the angular-momentum representation
\begin{eqnarray}
\label{radial-representation}
\psi({\bm r})=\sumlm c_{\ell m} \chi_{\ell}(r;z) \ylm,
\end{eqnarray}
where $Y_{\ell m}$ is the spherical harmonic functions \cite{helgaker00}
and the radial part $\chi_{\ell}$ satisfies the following radial Schr\"{o}dinger equation at energy parameter $z$
\begin{subequations}
	\label{radial-schrodinger}
	\begin{empheq}[left=\empheqlbrace]{align}
	\label{radial-schrodinger-a}
	\displaystyle\left(\frac{\partial^2}{\partial r^2} + \frac{2}{r} \frac{\partial }{\partial r} -  \frac{\ell(\ell+1)}{r^2} -V_{0}(r) +z \right) \chi_{\ell}(r;z)=0 
	& \qquad  0<r<R_0 ,
	\\[1ex]
	\label{radial-schrodinger-b}
	\displaystyle\left(\frac{\partial^2}{\partial r^2} + \frac{2}{r} \frac{\partial }{\partial r} -  \frac{\ell(\ell+1)}{r^2} +z \right) \chi_{\ell}(r;z)=0
	& \qquad r>R_0 .
	\end{empheq}
\end{subequations}

We first study \eqref{radial-schrodinger-b} outside the atomic sphere with vanishing potential, which is the well-known spherical Bessel differential equation \cite{gil07}. 
For given $\ell\in\mathbb{N}$ and $z<0$, the solutions could be the spherical Bessel functions $j_{\ell}(\sqrt{z}r)$, the spherical Newmann functions $n_{\ell}(\sqrt{z}r)$, or the spherical Hankel functions $h_{\ell}(\sqrt{z}r)=j_{\ell}(\sqrt{z}r)+i n_{\ell}(\sqrt{z}r)$.
The differences between these three special functions lie in their asymptotic behavior at the origin and infinity.
For $z<0$, the spherical Bessel functions $j_{\ell}(\sqrt{z}r)$ are regular at the origin while the other two diverge, and the spherical Hankel functions $h_{\ell}(\sqrt{z}r)$ are bounded at infinity while the other two are not (see, e.g. \cite{gil07,gonis00}).
Since any two of the above functions can make a complete basis set for the solutions of \eqref{radial-schrodinger-b}, the radial solutions outside the atomic sphere can be represented by a linear combination of the spherical Bessel and Hankel functions
\begin{eqnarray}\nonumber
\label{radial-general}
\label{chi_l}
\chi_{\ell}(r;z) = a_{\ell}(z) j_{\ell}(\sqrt{z}r) + b_{\ell}(z) h_{\ell}(\sqrt{z}r) \qquad r\geq R_0 .
\end{eqnarray}
From a physical point of view, $\chi_{\ell}$ can be viewed as a superposition of an incoming wave and a scattering wave, which correspond to the two parts with Bessel and Hankel functions, respectively.

The goal of single-site scattering is to determine the relationship between these two parts, more precisely, the ratio between the coefficients $b_{\ell}$ and $a_{\ell}$
\begin{eqnarray}
\label{t-matrix}
t_{\ell}(z) := \frac{b_{\ell}(z)}{a_{\ell}(z)} 
\qquad \rm{for}~\ell\in\mathbb{N}.
\end{eqnarray}
This gives the so-called $t$-matrix (it is called a ``matrix" since it is indexed by the angular momentum $\ell$ for a given atomic site, and there are off-diagonal terms for non-spherical potential, see \ref{append-asym}) in MST method and is used as a fundamental building block of the whole theory. 

From the numerical intuition, the $t$-matrix can be determined by matching the values and derivatives of the radial solution $\chi_{\ell}$ at $r=R_0$.
Since multiplying $\chi_{\ell}$ by an arbitrary constant does not change the solution of \eqref{radial-schrodinger}, it is only required to match the ``logarithm derivatives" $\big(\log(\chi_{\ell})\big)'$ at $r=R_0$: 
\begin{eqnarray}
\label{log-match}
\frac{\chi_{\ell}(r;z)}{\chi'_{\ell}(r;z)} \bigg|_{r=R^-_0} = \frac{a_{\ell}(z) j_{\ell}(\sqrt{z}r) + b_{\ell}(z) h_{\ell}(\sqrt{z}r)}{\sqrt{z} a_{\ell}(z) j'_{\ell}(\sqrt{z}r) + \sqrt{z} b_{\ell}(z) h'_{\ell}(\sqrt{z}r)}\bigg|_{r=R^+_0}.
\end{eqnarray}
Then a direct calculation with the definition \eqref{t-matrix} leads to
\begin{eqnarray}
\label{log-deriv}
t_{\ell}(z)=\frac{\sqrt{z} \chi_{\ell}(R_0;z) j'_{\ell}(\sqrt{z}R_0) -  \chi'_{\ell}(R_0;z) j_{\ell}(\sqrt{z}R_0)}
{\chi'_{\ell}(R_0;z) h_{\ell}(\sqrt{z}R_0) - \sqrt{z} \chi_{\ell}(R_0;z) h'_{\ell}(\sqrt{z}R_0)} ,
\end{eqnarray}
where we have denoted by $\chi'_{\ell}(R_0;z):=\frac{\partial}{\partial r}\chi_{\ell}(r;z)\big|_{r=R^-_{0}}$ the left-hand derivative of $\chi_{\ell}$ at $r=R_0$.
Note that the undetermined multiplicative constant involved in $\chi_{\ell}$ can be canceled in \eqref{log-deriv}, which leads to a unique $t_{\ell}(z)$.

In the above matching procedure, a so-called ``asymptotic problem" \label{asymptotic_problem} (see e.g. \cite{chen15a,singh06}) with $\chi_{\ell}(R_0;z)=0$ must be avoided such that the conditions \eqref{log-match} and \eqref{log-deriv} can make sense.
We will specify an explicit assumption to avoid this ``asymptotic problem" later in Section \ref{sec:cutL}.

An alternative way to derive the $t$-matrix is to use the so-called Lippmann-Schwinger equation \cite{gonis00,lippmann50,martin05}, which is the most widely-used approach in the quantum scattering theory.
The Lippmann-Schwinger equation gives the relationship between incoming and outgoing waves via the free electron Green's function, 
and writes the $t$-matrix as
\begin{eqnarray}
\label{t_l}
t_{\ell}(z)=-i\sqrt{z} \int_{0}^{R_0}j_{\ell}(\sqrt{z}r) V_{0}(r) \chi_{\ell}(r;z) r^2 \dd r ,
\end{eqnarray}
where $\chi_{\ell}$ is the solution of \eqref{radial-schrodinger-a} with some normalization condition.
For the sake of completeness, we give a detailed explanation of \eqref{t_l} in \ref{append-free-electron-Green},
and provide a rigorous analysis to show that the two expressions \eqref{log-deriv} and \eqref{t_l} for $t$-matrix are equivalent.
The integration form \eqref{t_l} from the Lippmann-Schwinger equation can be more easily generalized to non-spherical potentials (see \ref{append-asym}) and even arbitrary space-filling potentials (see \cite{gonis00}), which is therefore more widely used in practice.
However, our analysis relies on the understanding of the matching condition \eqref{log-deriv}.

\subsection{Multiple-site scattering }
\label{subsec:multi-scatter}

The MST methods consider the eigenfunctions of \eqref{schrodinger} in the interstitial region as a wave scattered by the muffin-tin potential.
For each atomic site, the incoming wave is expressed as the scattering waves from all other sites, while the resulting outgoing wave satisfies the single-site scattering relationship in the previous section.
We will then briefly review how to derive the basic secular equation within this MST framework.

We first denote by
\begin{eqnarray}
\label{abbr-hankel}
H_{\ell m}(\bm{r};z)=h_{\ell}\big(\sqrt{z}r\big) \ylm \quad
{\rm and} \quad J_{\ell m}(\bm{r};z)=j_{\ell}\big(\sqrt{z}r\big) \ylm 
\end{eqnarray}
for $\ell\in\N$ and $|m|\leq \ell$.
These functions give a complete basis set for the free-electron system with energy parameter $z$.
Then the eigenfunction $u$ of \eqref{schrodinger} (with corresponding eigenvalue $E$) in the interstitial region can be written as a sum of scattering waves from all scattering center ${\bm R}_{n}$,
\begin{eqnarray}
\label{multi-wavefun}
u(\bm r) =\sum_{n=1}^N \sumlm b^{n}_{\ell m} H_{\ell m}(\bm{r}_{n};E) \qquad \bm{r}\in \Omega^{\rm{I}}
\end{eqnarray}
with the coefficients $b^{n}_{\ell m}$ and local coordinates ${\bm r}_{n}:={\bm r} - {\bm R}_{n}$.
We only use the Hankel functions for the expansion \eqref{multi-wavefun} since $h_{\ell}(\sqrt{E}r)$ is bounded at infinity for $E<0$ while $j_{\ell}(\sqrt{E}r)$ is not, so that the far-field boundary condition of \eqref{schrodinger} can be satisfied.

Alternatively, we can represent the eigenfunction as a superposition of incoming and outgoing waves around a specific atomic sphere.
More precisely, for a given site $\bm R_{n}$, we write
\begin{eqnarray}
\label{single-wavefun}
u(\bm r) =  \sumlm
\big(a^{n}_{\ell m} J_{\ell m}(\bm{r}_{n};E) + b^{n}_{\ell m} H_{\ell m}(\bm{r}_{n};E)\big) \qquad \bm{r}\in\Omega^{\rm{N}}_{n}
\end{eqnarray}
with $\Omega^{\rm{N}}_{n}\subset\Omega^{\rm I}$ the around $n$-th atomic sphere (see Figure \ref{fig-domain})
\begin{eqnarray}
\label{ring_domain}
\Omega^{\rm{N}}_{n} = \Big\{ {\bm r}\in\Omega^{\rm I} ~\Big|~R_{n}<|{\bm r}_{n}|< \min_{n'} \{|\bm{R}_{n}-\bm{R}_{n'}| \} \Big\}.
\end{eqnarray}

We then have a so-called ``expansion theorem" that can bridge the two expressions \eqref{multi-wavefun} and \eqref{single-wavefun} for the eigenfunction $u({\bm r})$ in the interstitial region.
For $n'\neq n$, the Hankel functions centered at atomic site $\bm{R}_{n}$ can be expanded by the Bessel functions centered at $\bm{R}_{n'}$
\begin{eqnarray}
\label{expansion-h-j}
H_{\ell m}(\bm{r}_n;E)=\sum_{\ell' m'} \struc(E) J_{\ell' m'}(\bm{r}_{n'};E) \qquad \vr\in\Omega^{\rm{N}}_{n'},
\end{eqnarray}
where the coefficients $\struc$ are the ``structure constants" depending only on the energy parameter $E$ and the displacement between two atomic sites $\bm{R}_{n'}-\bm{R}_{n}$
\begin{eqnarray}
\label{structure-constant}
\struc(E)=4\pi \sum_{\ell '' m''} i^{l-l'-l''} C_{\ell m, \ell' m', \ell''m''} H_{\ell'' m''}(\bm{R}_{n'}-\bm{R}_{n};E) \qquad n\ne n'
\end{eqnarray}
with $\displaystyle C_{\ell m, \ell' m', \ell''m''} = \int_{S^2} Y_{\ell m}(\hat{\bm{r}}) Y_{\ell' m'}(\hat{\bm{r}}) Y_{\ell''m''}(\hat{\bm{r}}) \dd\hat{\bm{r}}$ being the Gaunt coefficients \cite{mavropoulos06}.
By convention, the structure constant is set to be $0$ for $n=n'$.
Note that the expansion \eqref{expansion-h-j} holds only in the region $\Omega^{\rm{N}}_{n'}$, which is the reason we introduce this ``neighbouring domain".
We refer to \cite[Appendix D]{gonis00} for detailed discussions of the expansion \eqref{expansion-h-j}.

Combining \eqref{multi-wavefun}, \eqref{single-wavefun}, \eqref{expansion-h-j} and the symmetry of $\struc(E)$ (i.e. $\struc(E)=g_{\ell' m', \ell m}^{n'n}(E)$), we have that for any atomic site $\pmb{R}_n$,
\begin{eqnarray}
\label{coef-in-sc}
a^{n}_{\ell m}=\sumnn \sum_{\ell'm'} \struc(E) b^{n'}_{\ell'm'}.
\end{eqnarray}
Moreover, for each atomic site $\pmb{R}_n$, the ratio between the coefficients $a^{n}_{\ell m}$ and $b^{n}_{\ell m}$ has been given by the $t$-matrix \eqref{t-matrix} of single-site scattering in the previous section
\begin{eqnarray}
\label{multi-incoming-scattering}
b^{n}_{\ell m} = \tnmat a^{n}_{\ell m}.
\end{eqnarray} 
Taking into accounts \eqref{coef-in-sc} and \eqref{multi-incoming-scattering}, we have the following equations
\begin{multline}
\label{secular-eq}
\qquad \sumn \sum_{\ell'm'} \Big(\delta_{\ell \ell'} \delta_{m m'} \delta_{nn'} - t^{n'}_{\ell'}(E) \struc(E) \Big) a^{n'}_{\ell'm'} = 0
\\
{\rm for}~ n = 1,\cdots,N,~\ell=0,1,\cdots,~{\rm and}~-\ell\leq m\leq\ell . \qquad
\end{multline} 
In order to have non-trivial solutions of the system \eqref{secular-eq}, we require $E$ to take the values such that the determinant of the coefficient matrix vanishes.
This gives us the eigenvalues $E$ of the problem \eqref{schrodinger}.
Note that although \eqref{secular-eq} is derived for spherically symmetric potentials in this section, it can be easily generalized to non-spherical potentials within the muffin-tin form (see \ref{append-asym}).

\subsection{The angular-momentum cut-off}
\label{sec:cutL}

Since \eqref{secular-eq} is in principle an infinite dimensional problem, one needs to truncate $\ell$ to obtain a finite dimensional approximation in practical calculations.
If the angular quantum number $\ell$ is truncated by some cut-off $L$, i.e. $\ell\leq L$, then the solutions of \eqref{secular-eq} is approximated by $E_L$ satisfying
\begin{eqnarray}
\label{kkr}
{\rm Det} \big( S(E_L) \big) = 0,
\end{eqnarray}
where the matrix $S(E_L)\in \mathbb{C}^{(N(L+1)^2)\times (N(L+1)^2)}$ is ``truncated" from \eqref{secular-eq} with matrix elements
\begin{eqnarray}\nonumber
S_{n\ell m,n'\ell'm'}(E_L) = \delta_{\ell \ell'} \delta_{m m'} \delta_{nn'} - t^{n'}_{\ell'}(E_{L}) \struc(E_{L}).  
\end{eqnarray}
This is the so-called KKR secular equation that lays the foundation of MST methods.
It is similar to an eigenvalue problem, in the sense that the ``approximate eigenvalue" $E_L$ yields a singular matrix $S(E_L)$.
However, \eqref{kkr} is very difficult to solve since the dependence of the matrix on $E_L$ is highly nonlinear.
One might resort to some ``root-trace" algorithm (which is used in our numerical experiments, see Section \ref{sec:numerics} for details) to find the solution.

With the approximate solution $E_L$, the corresponding wave function inside each atomic sphere $\Omega_n$ is given by 
\begin{eqnarray}
\label{radial-component}
\zeta^n_{\ell m}(\bm{r}_{n};E_L) = \chi^n_{\ell}(r_{n};E_L) Y_{\ell m}(\bm{\hat{r}}_n) \qquad |\vr_n|<R_n,
\end{eqnarray}
where $\chi_{\ell}^n$ is the solution of \eqref{radial-schrodinger-a} satisfying the boundary condition
\begin{eqnarray*}
\chi^{n}_{\ell}(R_n;E_{\lmax}) = j_{\ell}(\sqrt{E_{\lmax}}R_n) + t^{n}_{\ell}(E_{\lmax}) h_{\ell}(\sqrt{E_{\lmax}}R_n) .
\end{eqnarray*}
We then write the wave function as
\begin{eqnarray}
\label{kkr-wavefun}
u_{\lmax}(\bm r;E_{\lmax}) = \left\{
\begin{array}{ll}
\displaystyle \sum_{\ell\le \lmax} a^{n}_{\ell m}(E_{\lmax}) \zeta^{n}_{\ell m}(\bm{r}_{n};E_{\lmax})
&\quad \bm{r}\in \omen~(n=1,\cdots, N), 
\\[1ex]
\displaystyle \sum_{n=1}^N \sum_{\ell \le \lmax} b^{n}_{\ell m}(E_{\lmax}) H_{\ell m}(\bm{r}_{n};E_{\lmax}) 
&\quad \bm{r}\in \Omega^{\rm I},
\end{array}
\right.
\end{eqnarray}
where the coefficients $\pmb{a}:=\{a^{n}_{\ell m}(E_{\lmax})\}$ satisfies the equations
\begin{eqnarray}
\label{eq:alm}
S(E_L) \pmb{a} = 0
\end{eqnarray}
with some normalization condition (for example, we can simply take $\|\pmb{a}\|_{\ell^2}=1$),
and $b^{n}_{\ell m}(E_L) = t^{n}_{\ell}(E_{L}) a^{n}_{\ell m}(E_L)$.

Throughout our analysis, we will make the following assumption
\begin{eqnarray}
\label{assumption}
{\rm Det} \big( P(E_L) \big) \ne 0,
\end{eqnarray}
where the matrix $P(E_L)\in \mathbb{C}^{(N(L+1)^2)\times(N(L+1)^2)}$ has elements
\begin{eqnarray}
\label{assump-inv}
P_{n\ell m,n'\ell'm' }(E_L) =  \delta_{nn'} \delta_{\ell\ell'} \delta_{mm'} + \frac{j_{\ell'}(\sqrt{E_L}R_{n'})}{h_{\ell'}(\sqrt{E_L}R_{n'})} \struc(E_L).
\end{eqnarray}
This condition can make sure that the ``asymptotic problem" $\chi^{n}_{\ell}(R_n;z)= 0~(n=1,\cdots,N, \ell\le L$, see page \pageref{asymptotic_problem}) does not happen at energy parameter $z=E_L$.
Note that if $\chi^{n}_{\ell}(R_n;E_L)=0$, then \eqref{log-deriv} implies $t^n_{\ell}(E_L) = -j_{\ell}(\sqrt{E_{\lmax}}R_n)/h_{\ell}(\sqrt{E_{\lmax}}R_n)$, which makes 
$S(E_L)=P(E_L)$ and leads to a contradiction between the KKR secular equation \eqref{kkr} and the assumption \eqref{assumption}.

The angular-momentum truncation $L$ is the only numerical discretization parameter we have introduced so far, which is also the fundamental parameter for almost all MST methods.
The goal of this work is to estimate the error from the truncation of angular-momentum sums. 
It has been observed in practical simulations that the truncation at relatively small number, say $L=2$, $L=3$, or $L=4$, can already give satisfactory results in many cases of physical interest, such as that of metallic systems on close-packed lattices \cite{gonis00}.
The fast convergence with respect to $L$ derived in the following section can provide a theoretical justification for this phenomenon.


\section{Numerical analysis of the MST approximations}
\label{sec:num-anal} 
\setcounter{equation}{0}

In this section, we will present an error estimate for the solutions of \eqref{kkr} with respect to the angular-momentum cut-off $\lmax$. 
The analysis consists of three steps.
First, we characterize the (approximate) solution space of the KKR secular equation, in particular, we give the weak continuity condition of \eqref{kkr-wavefun} through atomic spherical surfaces.
Then we convert the KKR secular equation into an equivalent variational form, whose solutions are exactly the same as those of KKR secular equation. 
Finally we provide a convergence analysis for the equivalent variational form and obtain a spectral convergence rate.

We consider the following weak form of eigenvalue problem \eqref{schrodinger}: 
Find $E\in \mathbb{R}$ and $u\in \hone$ such that $\|u\|_{\ltwo}=1$ and 
\begin{eqnarray}\label{model-weak-eq}
\label{model-eq}
a(u,v)=E(u,v)\qquad\forall~v\in \hone,
\end{eqnarray}
where the bilinear form $a(\cdot, \cdot): \hone\times \hone\rightarrow\mathbb{C}$ is defined by
\begin{eqnarray*}
a(u,v) := \int_{\mathbb{R}^3} \nabla u\cdot \nabla v^* + V u v^*
\end{eqnarray*}
with asterisk denoting the conjugate.
This bilinear form is coercive on $H^{1}(\mathbb{R}^3)$, i.e., there exist positive constants $\alpha$ and $\beta$ such that 
\begin{eqnarray}
\label{a-coercive}
	a(v,v) \ge \alpha \|v\|^{2}_{H^1(\mathbb{R}^3)} - \beta \|v\|^{2}_{L^2(\mathbb{R}^3)} \qquad \forall~v\in H^1(\mathbb{R}^3).
\end{eqnarray}
For simplicity, we will assume $\beta=0$ afterwards since a shifted bilinear form $a^{\beta}(u,v):=a(u,v) + \beta(u,v)$ makes no essential change to the eigenvalue problem.

\subsection{Approximate wave function space}
\label{subsec:weak-continuity}

We define the following space related to the muffin-tin decomposition
\begin{eqnarray*}
H^{\delta}(\mathbb{R}^3) = \left\{ v\in L^2(\mathbb{R}^3) ~\Big| ~v|_{\Omega^{\rm I}}\in H^{1}(\Omega^{\rm I}), ~v|_{\omen}\in H^{1}(\omen)~~ \forall~1\le n\le N \right\} ,
\end{eqnarray*}
with a broken Sobolev norm $\|v\|_{\delta} = \|v\|_{H^{1}(\Omega^{\rm I})} + \sum_{n=1}^{N} \|v\|_{H^{1}(\omen)}$,
We further define a bilinear form 
$a_{\delta}(\cdot,\cdot):  H^{\delta}(\mathbb{R}^3) \times  H^{\delta}(\mathbb{R}^3) \rightarrow \mathbb{C}$ by
\begin{eqnarray}\nonumber
	a_{\delta}(u,v) := \int_{\Omega^{\rm I}} \nabla u \cdot\nabla v^* + \sum_{n=1}^{N} \int_{\omen} \nabla u \cdot\nabla  v^* + \int_{\mathbb{R}^3} V u v^*.
\end{eqnarray}
Inherited from the coercivity of $a(\cdot,\cdot)$ in \eqref{a-coercive}, we have that there exist $\alpha_{\delta}>0$ such that 
\begin{eqnarray}\label{coerciveness}\nonumber
	a_{\delta}(v,v) \ge \alpha_{\delta} \|v\|^{2}_{\delta} \qquad \forall~v\in H^{\delta}(\mathbb{R}^3).
\end{eqnarray}

We introduce the notion of ``weak continuity" for the functions in $H^{\delta}(\mathbb{R}^3)$ that are continuous through the atomic spherical surfaces $\Gamma_n~(n=1,\cdots,N)$ for low angular-momentum components.
For $v\in H^{\delta}(\mathbb{R}^3)$, we denote by $[v]_{\Gamma_n} := v^{+}|_{\gamn}-v^{-}|_{\gamn}$ the jump through surface $\gamn$, 
where $v^{\pm}|_{\gamn}$ denote the traces of $v$ taken from inside and outside the sphere $\omen$, respectively.
For a given $L\ge 0$, we say that $v$ is weakly continuous through $\gamn$ if
\begin{eqnarray}
\label{mortar-value}
\int_{\gamn} [v]_{\gamn}  Y_{\ell m}(\hat{\bm{r}}) \dd\hat{\bm{r}}=0 
\qquad \forall ~0\le \ell\le L, ~|m| \le \ell .
\end{eqnarray}

We then consider the following finite dimensional space with given angular-momentum truncation $L$ and energy parameter $z$
\begin{multline}
\label{solution-space}
\wlz = \Big\{ \psi\in H^{\delta}(\mathbb{R}^3) ~\Big|~
\psi|_{\Omega^{\rm I}} = \sum_{n=1}^N\sum_{\ell\leq\lmax} \tilde{c}_{n\ell m} H_{\ell m}(\bm r_{n};z), ~
\\[1ex]
\psi|_{\omen} = \sum_{n=1}^{N}\sum_{\ell\leq\lmax} \bar{c}_{n\ell m} \zeta^{n}_{\ell m}(\bm r_{n};z) , ~
\psi~{\rm and}~\frac{\partial \psi}{\partial r_n} ~\text{satisfy \eqref{mortar-value} } {\rm for}~  n=1,\cdots,N
\Big\} , \qquad
\end{multline}
where $\tilde{c}_{n\ell m}$ and $\bar{c}_{n\ell m}$ are coefficients, $\zeta^{n}_{\ell m}$ is given by \eqref{radial-component},
and $\displaystyle \partial / \partial r_n$ denotes the derivative along the radial direction with respect to the $n$-th atomic site $\bm{R}_{n}$.
It is obvious that the functions in $\wlz$ satisfy the following strong form in the interstitial region and atomic spheres, respectively
\begin{eqnarray}
\label{bvp-schrodinger}
(-\Delta + V({\bf r})-z) u_L({\bm r};z)=0 
\displaystyle \qquad \bm{r}\in \mathbb{R}^{3} \backslash \left(\bigcup_{n=1}^N\gamn\right) .
\end{eqnarray}
The solutions $u_L$ inside and outside the spheres are then patched together through $\gamn$ by the weak continuity conditions \eqref{mortar-value}.

Note that the space $\wlz$ is trivial (only contains zero function) for most of the energy parameter $z$, and non-trivial only for the solution $z=E_L$ of the KKR secular equation \eqref{kkr}.
This is justified in the following lemma, whose proof is given in \ref{append-weak-continuity}.

\begin{lemma}
\label{lem-weak-continuous}
$E_{L}$ solves \eqref{kkr} if and only if $\mathcal{W}_L(E_L) \backslash \{0\} \ne\emptyset$.
Moreover, if $E_{L}$ solves \eqref{kkr}, then
$\mathcal{W}_L(E_L) = \big\{u\in H^{\delta}(\R^3)~\big|~u\text{ can be written as \eqref{kkr-wavefun}}\big\}$.
\end{lemma}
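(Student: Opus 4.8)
The plan is to establish the "if and only if" by showing that the coefficient data of a function in $\wlz$ is governed precisely by the linear system \eqref{eq:alm} with matrix $S(E_L)$, and then to read off both the nontriviality characterization and the explicit description of $\wlz$. First I would take a generic $\psi \in \wlz$ and write out its two pieces: inside $\omen$ it is $\sum_{\ell\le L}\bar{c}_{n\ell m}\zeta^n_{\ell m}(\bm{r}_n;E_L)$, and in $\Omega^{\rm I}$ it is $\sum_{n}\sum_{\ell\le L}\tilde{c}_{n\ell m}H_{\ell m}(\bm{r}_n;E_L)$. The key device is the expansion theorem \eqref{expansion-h-j}: restricting the interstitial piece to the neighbouring region $\omen$ (more precisely to the annular part $\Omega^{\rm N}_n$ near $\Gamma_n$), the Hankel functions centred at the other sites $n'\ne n$ get re-expanded in Bessel functions centred at $\bm{R}_n$, so that the interstitial solution acquires, near $\Gamma_n$, exactly the form $\sum_{\ell\le L}\big(a^n_{\ell m}J_{\ell m}(\bm{r}_n;E_L)+b^n_{\ell m}H_{\ell m}(\bm{r}_n;E_L)\big)$ with $b^n_{\ell m}=\tilde{c}_{n\ell m}$ and $a^n_{\ell m}=\sum_{n'\ne n}\sum_{\ell'm'}\struc(E_L)\,b^{n'}_{\ell'm'}$, i.e. \eqref{coef-in-sc}. (A small subtlety: the truncation to $\ell\le L$ means the re-expansion produces components of all orders, so one uses the orthogonality of the $Y_{\ell m}$ on $\Gamma_n$ to pick out only $\ell\le L$; here the fact that the weak-continuity conditions \eqref{mortar-value} are imposed only for $\ell\le L$ is exactly what is needed.)

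Next I would impose the weak-continuity conditions \eqref{mortar-value} on $\psi$ and on $\partial\psi/\partial r_n$ across each $\Gamma_n$. Using orthogonality of spherical harmonics, \eqref{mortar-value} for $\ell\le L$ becomes, for each $(n,\ell,m)$ with $\ell\le L$, a pair of scalar equations matching the value and the radial derivative at $r_n=R_n$ of the inside piece $\bar{c}_{n\ell m}\chi^n_\ell(R_n;E_L)$ against the outside piece $a^n_{\ell m}j_\ell(\sqrt{E_L}R_n)+b^n_{\ell m}h_\ell(\sqrt{E_L}R_n)$ (and similarly for derivatives). Recalling the normalization $\chi^n_\ell(R_n;E_L)=j_\ell(\sqrt{E_L}R_n)+t^n_\ell(E_L)h_\ell(\sqrt{E_L}R_n)$ chosen in \eqref{radial-component} and the definition \eqref{log-deriv} of the $t$-matrix as the object matching logarithmic derivatives, these two matching conditions are consistent and force $\bar{c}_{n\ell m}=a^n_{\ell m}$ together with $b^n_{\ell m}=t^n_\ell(E_L)a^n_{\ell m}$, which is \eqref{multi-incoming-scattering}. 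Here the assumption \eqref{assumption}, equivalently the avoidance of the asymptotic problem $\chi^n_\ell(R_n;E_L)=0$, is invoked so that the matching is nondegenerate and determines the coefficients uniquely. Combining this with \eqref{coef-in-sc} gives precisely $S(E_L)\bm{a}=0$, i.e. \eqref{eq:alm}, with $\bm{a}=\{a^n_{\ell m}\}$; conversely any solution $\bm{a}$ of $S(E_L)\bm{a}=0$ yields, via $b^n_{\ell m}=t^n_\ell(E_L)a^n_{\ell m}$ and \eqref{radial-component}, a function of the form \eqref{kkr-wavefun} lying in $\wlz$. This sets up a linear isomorphism between $\wlz$ and $\ker S(E_L)$.

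Given this isomorphism, both assertions follow. The space $\wlz$ contains a nonzero element iff $\ker S(E_L)\ne\{0\}$ iff ${\rm Det}(S(E_L))=0$, which is exactly \eqref{kkr}; this is the first statement. For the second statement, when $E_L$ solves \eqref{kkr} the isomorphism shows every $\psi\in\wlz$ is of the form \eqref{kkr-wavefun} (with $\pmb a\in\ker S(E_L)$), and conversely every function written as in \eqref{kkr-wavefun} with $\pmb a$ solving \eqref{eq:alm} lies in $H^\delta(\R^3)$ and satisfies the weak continuity conditions, hence lies in $\wlz$; so the two sets coincide.

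\textbf{Main obstacle.} I expect the delicate point to be the bookkeeping in the re-expansion step: making rigorous the claim that restricting the truncated interstitial sum to the annulus $\Omega^{\rm N}_n$ and testing against $Y_{\ell m}$ with $\ell\le L$ produces exactly the incoming coefficients $a^n_{\ell m}$ of \eqref{coef-in-sc}, without spurious contributions from the higher-$\ell$ tails generated by \eqref{expansion-h-j}, and confirming that the weak (rather than pointwise) continuity in \eqref{mortar-value} is strong enough to pin down the finitely many coefficients $\{\bar c_{n\ell m},\tilde c_{n\ell m}\}_{\ell\le L}$. This is where the choice to impose \eqref{mortar-value} only up to order $L$, the region $\Omega^{\rm N}_n$ on which \eqref{expansion-h-j} is valid, and the nondegeneracy assumption \eqref{assumption} all have to fit together; everything else is routine orthogonality and linear algebra.
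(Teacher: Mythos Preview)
Your plan is essentially the same as the paper's: set up a linear bijection between $\wlz$ and $\ker S(E_L)$ by expanding the interstitial piece near each $\Gamma_n$ via \eqref{expansion-h-j}, then match values and radial derivatives against the interior solution for each $\ell\le L$.

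One correction: you do \emph{not} need assumption \eqref{assumption} here, and the paper's proof does not invoke it for this lemma. After the re-expansion, the weak-continuity conditions for value and derivative read, for each $(n,\ell,m)$ with $\ell\le L$,
\[
\Big(A^n_{\ell m}-a^n_{\ell m}\Big)j_\ell(\sqrt{E_L}R_n)+\Big(t^n_\ell A^n_{\ell m}-b^n_{\ell m}\Big)h_\ell(\sqrt{E_L}R_n)=0
\]
and the analogous equation with $j'_\ell,h'_\ell$ (here $A^n_{\ell m}$ is your $\bar c_{n\ell m}$ and $a^n_{\ell m}=\sum_{n'\ne n}\sum_{\ell'm'}\struc b^{n'}_{\ell'm'}$). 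The paper concludes $A^n_{\ell m}=a^n_{\ell m}$ and $b^n_{\ell m}=t^n_\ell A^n_{\ell m}$ directly from the Wronskian relation
\[
\begin{vmatrix} j_\ell(x) & h_\ell(x)\\ j'_\ell(x) & h'_\ell(x)\end{vmatrix}=\frac{i}{x^2}\ne 0,
\]
which makes the $2\times 2$ system nondegenerate regardless of whether $\chi^n_\ell(R_n;E_L)$ vanishes. So the matching step is already rigid without \eqref{assumption}; your reliance on it would prove a weaker statement than the lemma asserts. The ``main obstacle'' you anticipate (higher-$\ell$ tails from \eqref{expansion-h-j}) is handled exactly as you suggest: the jump lives entirely in the $\ell>L$ components, and orthogonality of the $Y_{\ell m}$ on $\Gamma_n$ kills them when testing with $\ell\le L$.
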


\subsection{Equivalent variational form}
\label{subsec:galerkin-form}

In this subsection, we will construct an eigenvalue problem in a variational form, whose eigenvalues can be shown to be the same as the solutions of the KKR secular equation \eqref{kkr}.

For a given energy parameter $z$, we define the trial function space
\begin{eqnarray}
\label{trial-space}
\ulz = \left\{u\in H^{\delta}(\mathbb{R}^3) ~\Big|~ u = \sum_{n=1}^N\sum_{\ell\leq\lmax} c_{n\ell m} \phi_{\ell m,\lmax}^{n}(\bm{r};z) \right\}, 
\end{eqnarray}
where the basis function $\phi_{\ell m,\lmax}^{n}$ is given by
\begin{eqnarray}
\label{trial-basis}
\phi_{\ell m,\lmax}^{n}(\bm{r};z) = \left\{
\begin{array}{ll}
H_{\ell m}({\bm r_n};z) \quad & {\rm in}~\Omega^{\rm I} ,
\\[1ex]
h_{\ell}(\sqrt{z}R_n) \zeta_{\ell m}^{n}(\bm{r}_n;z) \quad &{\rm in} ~\omen ,
\\[1ex]
\displaystyle \sum_{\ell'\le L} \Big(\struc(z) j_{\ell'}(\sqrt{z}R_{n'}) \Big) \zeta_{\ell'm'}^{n'} (\bm{r}_{n'};z) \quad 
&{\rm in}~ \Omega_{n'} \quad \forall~ n'\ne n .
\end{array}\right.
\end{eqnarray}
The basis function $\phi_{\ell m,\lmax}^{n}$ is constructed as \eqref{trial-basis} such that it (a) is the solution of radial Schr\"{o}dinger equation at energy $z$ times spherical harmonic function $Y_{\ell m}$ in the $n$-th atomic sphere, (b) extends to the interstitial region continuously by Hankel functions $H_{\ell m}$ as the scattered wave, (c) and flows into other atomic spheres in a weakly continuous way while satisfying the Schr\"{o}dinger equation with energy $z$ at the same time.
Therefore, $\wlz$ is a subset of $\ulz$.
Conversely, $\ulz$ contains more functions than $\wlz$, say $\wlz\subsetneq \ulz$,
since the slope of $\phi_{\ell m,\lmax}^{n}$ may not be weakly continuous through the atomic spherical surfaces.

We also need the following test function space
\begin{eqnarray}
\label{test-space}
\vlz = \left\{u\in H^{\delta}(\mathbb{R}^3) ~\Big|~ u = \sum_{n=1}^N\sum_{\ell\leq\lmax} c_{n\ell m} \varphi_{\ell m,\lmax}^{n}(\bm{r};z) \right\} ,
\end{eqnarray}
where the basis function $\varphi_{\ell m,\lmax}^{n}$ is defined by
\begin{eqnarray}
\label{test-basis}
\varphi_{\ell m,\lmax}^{n}(\bm{r};z) 
= \left\{
\begin{array}{ll}
\displaystyle \phi_{\ell m,\lmax}^{n}(\bm{r};z) - \eta_{n'}(|\bm{r}_{n'}|) \sum_{\ell'>\lmax} \struc(z) J_{\ell'm'}(\bm{r}_{n'};z)  
\qquad  & {\rm in} ~ B_{n'}~(n'\ne n) ,
\\[1ex]
\phi_{\ell m,\lmax}^{n}(\bm{r};z) & {\rm otherwise}
\end{array}\right.
~~
\end{eqnarray}
with $B_n=\{{\bm r}~\big|~ R_{n}<|{\bm r}_{n}|<R_{n}+r_{n}^{\rm b} \}$ \label{Bn} being a ring (of width $r_{n}^{\rm b}$) around the $n$-th atomic sphere (see Figure \ref{fig-domain}),
and $\eta_n$ being a cut-off functions satisfying $\eta_n\in C^{\infty}[0,+\infty)$, $0\le\eta_n\le 1$ and
\begin{eqnarray}\label{cut-off-eq}
\eta_n(r)=\left\{
\begin{array}{ll}
1 & \quad r\le R_n
\\[1ex]
0 & \quad r\ge R_n+r_{n}^{\rm b}
\end{array}
\right.
\quad{\rm for}~n=1,\cdots,N .
\end{eqnarray}
The widths $r_n^{\rm b}$ should be small enough such that the regions $B_n~(n=1,\cdots,N)$ do not overlap.
The basis function $\varphi_{\ell m,\lmax}^{n}$ can be viewed as a slight mollification of $\phi_{\ell m,\lmax}^{n}$ (only on the regions $B_{n'}$ with $n'\neq n$) by the cut-off functions, such that it is continuous (not just weakly continuous) through the atomic spherical surfaces $\Gamma_{n'}$.
Note that $\varphi_{\ell m,\lmax}^{n}$ does not satisfy the Schr\"{o}dinger equation in $B_n$ any more.

We then define a map $\mollf: \ulz\rightarrow \vlz$ such that
\begin{eqnarray}\label{mollify}
\mollf \phi_{\ell m,\lmax}^{n}(\bm{r};z) = \varphi_{\ell m,\lmax}^{n}(\bm{r};z) \qquad  \forall~ 1\le n \le N, ~0\le \ell\le \lmax, ~|m|\le \ell.
\end{eqnarray}
It can be seen from the definition that the role of $\mollf$ is to mollify the functions in $\ulz$ from weakly continuous to continuous through the atomic spherical surfaces.
Note that the mapping $\mollf$ actually depends on the energy parameter $z$ and the angular-momentum truncation $L$.
For simplicity of presentations, we will suppress this dependence whenever it is clear from the context.

We then construct the following variational form of an eigenvalue problem: 
Find $\laml\in \mathbb{R}$ and $\ul\in \trial$ such that $(u_L, \mollf u_L)=1$ and
\begin{eqnarray}
\label{Galerkin-form}
a_{L}(\ul,v) = \laml (\ul, \mollf v) \qquad \forall~ v\in \trial ,
\end{eqnarray}
where the bilinear form $a_{L}(\cdot,\cdot):~\ulz \times \ulz \rightarrow \mathbb{C}$ is defined by
\begin{eqnarray}\nonumber
\label{pg}
a_{L}(u,v) := a_{\delta}(u,\mollf v).
\end{eqnarray}
We mention that \eqref{Galerkin-form} can also be viewed as a Petrov-Galerkin scheme \cite{demkowicz10}:
Find $\laml\in \mathbb{R}$ and $\ul\in \trial$ such that $(u_L, \mollf u_L)=1$ and
\begin{eqnarray}\nonumber
a_{\delta}(\ul,v) = \laml (\ul, v) \qquad \forall ~v\in \test ,
\end{eqnarray}
where the trial function space $\ulz$ and the test function space $\vlz$ are (slightly) different.

Note that \eqref{Galerkin-form} is a nonlinear eigenvalue problem since both the bilinear form $a_L(\cdot,\cdot)$ and the trial function space $\trial$ depend on the eigenvalue $E_L$ under consideration.
Therefore the solution may be very difficult to find from a practical point of view.
However, we clarify that \eqref{Galerkin-form} is introduced only for theoretical analysis rather than practical calculations.

We will first show some basic properties of the bilinear form $a_{L}(\cdot,\cdot)$ in the following lemma, which implies that $a_L$ gives an inner product on $\ulz$, and is coercive and continuous with respect to the broken Sobolev norm $\|\cdot\|_{\delta}$.
The proof is given in \ref{appen-bilinear}.

\begin{lemma}
\label{lem-equ}
{\rm (a)} $a_{L}(\cdot,\cdot)$ is symmetric, i.e.,
\begin{eqnarray}\label{norm-sym}
	a_{L}(u,v)=a_{L}(v,u)^* \qquad \forall~ u,v\in \ulz.
\end{eqnarray}
{\rm (b)} If $L$ is sufficiently large, then there exist positive constants $C_1,~C_2,~C_3$ and $C_4$ that do not depend on $\lmax$, such that for any $u \in \ulz$,
\begin{align}
\label{norm-equ}
& {\rm (b1)} \qquad C_1 \|u\|^2_{\delta} \le a_{L}(u,u) \le C_2 \|u\|^2_{\delta}, 
\\[1ex] 
\label{M-eq}
& {\rm (b2)} \qquad \|\mollf u\|_{\ltwo} \le C_3 \|u\|_{\ltwo}
\qquad{\rm and}\qquad
\|\mollf u\|_{\delta} \le C_4 \|u\|_{\delta} .
\end{align}
\end{lemma}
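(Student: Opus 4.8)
The plan is to reduce everything to the broken Sobolev structure already set up for $a_\delta$ and to the boundedness of the mollification map $\mathcal{M}$. For part (a), symmetry of $a_L$, I would first observe that $a_\delta(\cdot,\cdot)$ is Hermitian on $H^\delta(\mathbb{R}^3)$ by inspection of its definition (real potential $V$, and the gradient terms are of the form $\int \nabla u\cdot\nabla v^*$). So the claim $a_L(u,v)=a_\delta(u,\mathcal{M}v)=\overline{a_\delta(\mathcal{M}u,v)}$ is \emph{not} automatic; it requires that $\mathcal{M}$ be ``self-adjoint'' with respect to $a_\delta$ in the relevant sense, i.e. $a_\delta(u,\mathcal{M}v)=a_\delta(\mathcal{M}u,v)$ for $u,v\in\mathcal{U}_L(z)$. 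The natural way to see this is to write $u=\phi+\mathcal{M}u$-type decompositions: since $\mathcal{M}$ only alters a basis function on the rings $B_{n'}$ by subtracting $\eta_{n'}\sum_{\ell'>L}(\cdots)J_{\ell'm'}$, the difference $w_u:=u-\mathcal{M}u$ is supported in $\bigcup_n B_n$ and is built purely from high angular momenta $\ell'>L$ (the correction terms), while $\mathcal{M}u$ and $u$ agree on low angular momenta $\ell\le L$. By the orthogonality of spherical harmonics on each sphere $\Gamma_n$ and the fact that the functions in $\mathcal{U}_L$ carry only $\ell\le L$ content on $\Gamma_n$, the cross terms $a_\delta(w_u,\mathcal{M}v)$ involving integration over the rings decouple by angular momentum and vanish; this gives $a_\delta(u,\mathcal{M}v)=a_\delta(\mathcal{M}u,\mathcal{M}v)$, which is manifestly Hermitian. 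I would carry out this orthogonality bookkeeping carefully, as it is the crux of (a).

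For part (b1), the upper bound $a_L(u,u)=a_\delta(u,\mathcal{M}u)\le C_2\|u\|_\delta^2$ follows from continuity of $a_\delta$ (Cauchy–Schwarz, boundedness of $V\in L^2$ via Sobolev embedding) combined with $\|\mathcal{M}u\|_\delta\le C_4\|u\|_\delta$, so (b2)'s second inequality must be proved first or in parallel. The lower bound $a_L(u,u)\ge C_1\|u\|_\delta^2$ is the substantive estimate: using $a_\delta(u,\mathcal{M}u)=a_\delta(\mathcal{M}u,\mathcal{M}u)+a_\delta(w_u,\mathcal{M}u)$ and the coercivity $a_\delta(\mathcal{M}u,\mathcal{M}u)\ge\alpha_\delta\|\mathcal{M}u\|_\delta^2$, it suffices to show $\|\mathcal{M}u\|_\delta$ and $\|u\|_\delta$ are equivalent and that the ring correction term $a_\delta(w_u,\mathcal{M}u)$ is negligible. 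Here is where ``$L$ sufficiently large'' enters: the correction $w_u$ consists of $\sum_{\ell'>L}g^{nn'}_{\ell m,\ell'm'}(z)J_{\ell'm'}$, and the structure constants times Bessel functions $g^{nn'}_{\ell m,\ell'm'}(z)j_{\ell'}(\sqrt{z}R_{n'})$ decay superalgebraically in $\ell'$ because $j_{\ell'}(\sqrt{z}r)$ decays like $(\sqrt{|z|}r/2)^{\ell'}/(2\ell'+1)!!$ for $r<R_{n'}<$ (intersite distance) — this is exactly the kind of estimate the paper will exploit for the main convergence theorem. Thus $\|w_u\|_{H^1(B_{n'})}\le \epsilon_L\|u\|_\delta$ with $\epsilon_L\to0$, so for $L$ large $|a_\delta(w_u,\mathcal{M}u)|\le C\epsilon_L\|u\|_\delta^2$ is absorbed, giving both the coercivity and the norm equivalence $\|\mathcal{M}u\|_\delta\sim\|u\|_\delta$.

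For part (b2), the $L^2$ bound $\|\mathcal{M}u\|_{L^2}\le C_3\|u\|_{L^2}$ and the $H^\delta$ bound are proved by the same decomposition $\mathcal{M}u=u-w_u$ together with $\|w_u\|_{L^2(B_{n'})}\le\epsilon_L\|u\|$ and $\|w_u\|_\delta\le\epsilon_L\|u\|_\delta$; one also needs that $\|w_u\|$ is controlled by the \emph{low-momentum} coefficients of $u$, i.e. by $\|u\|$ itself, which follows because the coefficients $c_{n\ell m}$ of $u$ in the $\phi^n_{\ell m,L}$ basis are controlled by $\|u\|_\delta$ (the basis is, up to the structure-constant couplings, comparable to an orthogonal system on the spheres) and $w_u$ depends linearly and boundedly on those same finitely many coefficients. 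The main obstacle overall is the decay estimate for the structure-constant/Bessel-function products controlling $\|w_u\|$ on the rings $B_{n'}$: everything in the lemma (Hermitian symmetry, coercivity, boundedness of $\mathcal{M}$) hinges on the difference $w_u=u-\mathcal{M}u$ being small in the appropriate norm, and quantifying that smallness is precisely where the geometry ($R_{n'}<\min_{n''}|\mathbf{R}_{n'}-\mathbf{R}_{n''}|$) and the special-function asymptotics must be combined. I would isolate this as a preliminary estimate before proving (a), (b1), (b2) in turn.
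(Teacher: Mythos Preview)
Your approach to part (a) has a genuine gap. You claim that $a_\delta(w_u,\mathcal{M}v)=0$ by angular-momentum orthogonality, arguing that $w_u=u-\mathcal{M}u$ has only $\ell>L$ content on the rings while $\mathcal{M}v$ has only $\ell\le L$ content. The first half is correct, but the second is not: on a ring $B_{n'}$ with $n'\neq n_2$, the mollified basis function $\varphi^{n_2}_{\ell_2 m_2,L}$ equals $H_{\ell_2 m_2}(\bm{r}_{n_2};z)-\eta_{n'}\sum_{\ell'>L}g^{n_2 n'}_{\ell_2 m_2,\ell'm'}J_{\ell'm'}$, which in the local coordinates $\bm{r}_{n'}$ becomes $\sum_{\ell'\le L}g^{n_2 n'}J_{\ell'm'}+(1-\eta_{n'})\sum_{\ell'>L}g^{n_2 n'}J_{\ell'm'}$. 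So $\mathcal{M}v$ still carries high-$\ell$ content on $B_{n'}$, weighted by $1-\eta_{n'}$, and the cross term $\int_{B_{n'}}\nabla(\eta_{n'}F)\cdot\nabla((1-\eta_{n'})G)^*$ does not vanish. (Your argument does go through on $B_{n_1}$ and $B_{n_2}$, so for $N=2$ sites you would be fine; it is precisely the third-site rings that break the orthogonality route.) The paper instead proves symmetry by a direct case analysis on $B_n$ for the three situations $n=n_1=n_2$, $n\notin\{n_1,n_2\}$, and $n=n_1\neq n_2$: in the nontrivial cases it integrates by parts and uses that $\phi^{n_i}_{\ell_i m_i,L}$ satisfies $(-\Delta-z)\phi=0$ on $B_n$ to reduce the gradient difference to boundary terms on $\partial B_n$, which then cancel because $\varphi=\phi$ on the outer boundary and the inner-boundary contributions are symmetric.

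Your approach to part (b) also differs from the paper's, and the difference matters. You want to show $\|w_u\|_\delta\le\epsilon_L\|u\|_\delta$ with $\epsilon_L\to 0$ via decay of $g^{nn'}_{\ell m,\ell'm'}j_{\ell'}$, then absorb $a_\delta(w_u,\mathcal{M}u)$ as a small perturbation. This would require a bound uniform over $u\in\mathcal{U}_L(z)$, a space that itself depends on $L$, and you would have to control the high-$\ell'$ tail of $u$ on $B_{n'}$ by $\|u\|_\delta$ with a constant independent of $L$ --- not impossible, but not as automatic as you suggest. The paper sidesteps this entirely: it shows $|a_\delta(u,\mathcal{M}u-u)|\le\gamma\,a_\delta(u,u)$ for a fixed $\gamma<1$ by noting that both sides involve the \emph{same} coefficients $|a^n_{\ell m}|^2$ for $\ell>L$ multiplied by integrals of $|j_\ell|^2$ and $|j'_\ell|^2$, but over rings of width $r_n^{\rm b}$ (left side) versus $2r_n^{\rm b}$ (right side). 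Since $|j_\ell(\sqrt{z}r)|$ is monotone increasing in $r$ for $z<0$, the ratio is bounded by a geometric constant less than $1$; ``$L$ large'' enters only to absorb a $1/\epsilon$ term from Cauchy--Schwarz into the $\ell(\ell+1)$ weight. No structure-constant asymptotics are needed.
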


Then we can derive the following theorem, which shows that the solutions of the KKR secular equation are equivalent to those of the variational form \eqref{Galerkin-form}.
The proof is given in \ref{append-equivalence}.

\begin{theorem}
\label{equivalence-them}
If the asymptotic assumption \eqref{assumption} is satisfied, then the following two statements are equivalent:
(a) $(E_\lmax,\ul)$ is an eigenpair of \eqref{Galerkin-form};
(b) $E_{L}$ is a solution of \eqref{kkr} and $\ul$ is given by \eqref{kkr-wavefun}.
\end{theorem}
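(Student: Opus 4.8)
The plan is to establish the equivalence by a direct translation between the finite-dimensional linear system defining the KKR secular equation and the variational eigenvalue problem \eqref{Galerkin-form}, using the basis $\{\phi_{\ell m,\lmax}^{n}(\cdot;z)\}$ as the bridge. First I would make the basic identification: writing $\ul = \sum_{n}\sum_{\ell\le\lmax} a^n_{\ell m}\,\phi^n_{\ell m,\lmax}(\cdot;E_\lmax)$, the coefficient vector $\pmb a = \{a^n_{\ell m}\}$ plays the role of the vector in \eqref{eq:alm}. The construction \eqref{trial-basis} is precisely engineered so that inside each sphere $\Omega_{n'}$ the function $\ul$ reduces to $\sum_{\ell'\le L}\big(a^{n'}_{\ell'm'} + \sum_{n\ne n'} a^n_{\ell m}\struc(E_\lmax) j_{\ell'}(\sqrt{E_\lmax}R_{n'})\big)\zeta^{n'}_{\ell'm'}$; comparing with the desired form \eqref{kkr-wavefun}, where the in-sphere coefficient should equal $a^{n'}_{\ell'm'}(E_\lmax)$ with $b^{n'}_{\ell'm'} = t^{n'}_{\ell'}h_{\ell'}(\sqrt{E_\lmax}R_{n'})$-type matching through $\Gamma_{n'}$, forces exactly the relation $S(E_\lmax)\pmb a = 0$ up to the invertible diagonal rescaling by $h_{\ell'}(\sqrt{E_\lmax}R_{n'})$ (which is where assumption \eqref{assumption} enters, guaranteeing $h_{\ell'}(\sqrt{E_\lmax}R_{n'})\ne 0$ and that the rescaling $S(E_\lmax)=P(E_\lmax)+(\text{diagonal})$ manipulation is legitimate; in particular $\chi^n_\ell(R_n;E_\lmax)\ne 0$). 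So the two statements are already linked at the level of the underlying spaces via Lemma~\ref{lem-weak-continuous}.

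Next I would show $(b)\Rightarrow(a)$. Suppose $E_\lmax$ solves \eqref{kkr} and $\ul$ is given by \eqref{kkr-wavefun}. By Lemma~\ref{lem-weak-continuous}, $\ul\in\wlz\subset\ulz$, so it is an admissible trial function. The point is to verify $a_L(\ul,v) = E_\lmax(\ul,\mollf v)$ for all $v\in\trial$, i.e. $a_\delta(\ul,\mollf v) = E_\lmax(\ul,\mollf v)$. Since $\ul$ satisfies the strong Schr\"odinger equation $(-\Delta+V-E_\lmax)\ul=0$ pointwise in $\mathbb{R}^3\setminus\bigcup_n\Gamma_n$ (this is \eqref{bvp-schrodinger} with $z=E_\lmax$, valid because $\ul\in\wlz$), integration by parts on $\Omega^{\rm I}$ and on each $\Omega_n$ gives $a_\delta(\ul,w) - E_\lmax(\ul,w) = \sum_n \int_{\Gamma_n} [\partial_{r_n}\ul]\, w^*$ for any $w\in H^\delta$; the jump $[\partial_{r_n}\ul]_{\Gamma_n}$ of the normal derivative is weakly continuous (it satisfies \eqref{mortar-value}, again because $\ul\in\wlz$ — this is exactly the extra condition distinguishing $\wlz$ from $\ulz$), while $w = \mollf v$ is genuinely \emph{continuous} through every $\Gamma_n$ by the construction \eqref{test-basis}; therefore the trace $(\mollf v)^*|_{\Gamma_n}$ expands in spherical harmonics with $\ell\le\lmax$ components that pair with $[\partial_{r_n}\ul]$, and its higher-$\ell$ components pair against $[\partial_{r_n}\ul]$ whose own expansion is finite (only $\ell\le L$, since $\ul$ has finite angular content on $\Gamma_n$ from both sides) — so each surface integral vanishes by orthogonality of spherical harmonics combined with \eqref{mortar-value}. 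This yields the variational identity. The normalization $(u_L,\mollf u_L)=1$ is then achieved by the free multiplicative constant, using $(u_L,\mollf u_L)>0$ from \eqref{M-eq} / Lemma~\ref{lem-equ}(b) (strict positivity since $a_L$-type positivity forces $\mollf u_L\ne 0$ when $u_L\ne 0$).

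For $(a)\Rightarrow(b)$ I would run the argument in reverse. Given an eigenpair $(E_\lmax,\ul)$ of \eqref{Galerkin-form}, expand $\ul = \sum a^n_{\ell m}\phi^n_{\ell m,\lmax}(\cdot;E_\lmax)$. Testing \eqref{Galerkin-form} against each basis function and integrating by parts as above, the bulk terms again cancel (each $\phi^n_{\ell m,\lmax}$ solves the Schr\"odinger equation away from the $\Gamma$'s and away from the rings $B_{n'}$; one must be slightly careful about the $B_{n'}$ regions where $\varphi$ was mollified, but there the discrepancy is controlled and ultimately produces only surface contributions on $\Gamma_{n'}$ because $\mollf v$ restores continuity there). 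What survives is a set of surface identities on the $\Gamma_n$ which, written in the spherical-harmonic basis with the coefficient vector $\pmb a$, are precisely the equations forcing the normal-derivative jumps to vanish weakly for $\ell\le\lmax$; decoding the continuity of $\ul$ itself (built into membership in $H^\delta$ combined with the explicit form \eqref{trial-basis}) together with this weak normal-derivative continuity places $\ul$ in $\wlz$, hence by Lemma~\ref{lem-weak-continuous} $\ul$ has the form \eqref{kkr-wavefun} and $E_\lmax$ solves \eqref{kkr}. One still must check $E_\lmax\in\mathbb{R}$: this follows from the symmetry of $a_L$ (Lemma~\ref{lem-equ}(a)) together with reality of $(u_L,\mollf u_L)$ — take the eigen-identity, conjugate, use \eqref{norm-sym}, and divide.

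\textbf{Main obstacle.} The delicate point is the treatment of the rings $B_{n'}$ in the $(a)\Rightarrow(b)$ direction: the test basis $\varphi^n_{\ell m,\lmax}$ differs from $\phi^n_{\ell m,\lmax}$ there by the cut-off-truncated tail $\eta_{n'}\sum_{\ell'>\lmax}\struc(z)J_{\ell'm'}$, so the integration-by-parts manipulation on $B_{n'}$ does not immediately give a clean surface term. The key is that this tail, although nonzero in $B_{n'}$, is a sum of functions each of which \emph{does} solve the free Schr\"odinger equation (being Bessel times spherical harmonic at energy $z$), so the only genuine volume contribution comes from the derivatives of $\eta_{n'}$, and those are arranged to telescope into boundary terms on $\Gamma_{n'}$ and on the outer boundary $\{|\bm r_{n'}| = R_{n'}+r^{\rm b}_{n'}\}$ where $\eta_{n'}$ and its derivative vanish; combined with the fact that the surviving $\Gamma_{n'}$ contribution involves only $\ell'\le\lmax$ harmonics (the tail's $\ell'>\lmax$ part is orthogonal to everything in $\vlz$'s $\Gamma_{n'}$-trace up to order $\lmax$), one recovers exactly the $S(E_\lmax)\pmb a=0$ equations. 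Verifying that this bookkeeping closes — i.e. that no higher-$\ell$ coupling leaks in through $B_{n'}$ — is the heart of the proof and the place where the specific design of \eqref{test-basis} is essential; I expect this to consume most of the technical work, with the rest being the (routine) spherical-harmonic orthogonality and the (short) reality/normalization arguments.
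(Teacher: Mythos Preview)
Your overall strategy matches the paper's: reduce both directions to Lemma~\ref{lem-weak-continuous} via an integration-by-parts identity that isolates the surface jumps $[\partial_{r_n}u_L]_{\Gamma_n}$. The $(b)\Rightarrow(a)$ direction is fine, though your argument about the angular content of $[\partial_{r_n}u_L]$ is garbled: the jump does \emph{not} have finite angular content (Hankel functions centred at other sites have a full $\ell$-expansion at site $n$); what actually makes the surface integral vanish is that $(\mollf v)|_{\Gamma_n}$ has only $\ell\le L$ content (this is the paper's formula \eqref{basis-surface}), and those components of the jump vanish by the weak continuity in $\wlz$.

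The genuine gap is in $(a)\Rightarrow(b)$. Your ``main obstacle'' --- the ring contributions in $B_{n'}$ --- is a non-issue: because $u_L\in\trial$ satisfies the strong equation $(-\Delta+V-E_L)u_L=0$ throughout $\Omega^{\rm I}$, including each $B_{n'}$, integration by parts is carried out on $u_L$ (not on $\mollf v$) and yields \emph{only} the surface identity
\[
\sum_{n=1}^N\int_{\Gamma_n}(\mollf v)^*\,[\partial_{r_n}u_L]_{\Gamma_n}=0\qquad\forall\,v\in\trial,
\]
with no volume remainder, regardless of what $\mollf v$ does inside $B_{n'}$. What you have \emph{missed} is the actual crux: this is a \emph{coupled} system across sites, because each test basis $\varphi^{n}_{\ell m,L}$ has nonzero trace on every $\Gamma_{n'}$. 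Writing the unknowns as $x_{n'\ell'm'}=\int_{\Gamma_{n'}}[\partial_{r_{n'}}u_L]Y_{\ell'm'}$ and using \eqref{basis-surface}, the system reads $\sum_{n'\ell'm'}h_{\ell'}(\sqrt{E_L}R_{n'})P_{n\ell m,n'\ell'm'}(E_L)\,x_{n'\ell'm'}=0$. It is precisely assumption~\eqref{assumption} --- the invertibility of $P(E_L)$ --- that forces each $x_{n'\ell'm'}=0$, i.e.\ the weak continuity of the normal derivative, hence $u_L\in\wlz$. You invoke \eqref{assumption} only for ``$h_{\ell'}\neq 0$'', which is not its role here; without the $P$-invertibility argument your proof has no mechanism to decouple the sites, and the implication does not close.
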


With this key observation, the error estimates for the KKR equation can now be obtained from the analysis of the variational problem.
From this theorem, we see that although the choice of cutoff $\eta_n$ affects the construction of the test function space $\vlz$ as well as the operator $\mollf$, but the solutions of \eqref{Galerkin-form} do not depend on $\eta_n$.

\subsection{Convergence and {\em a prior} error estimates}
\label{subsec:nual-anal-galerkin}

Let $(E_L,u_L)$ solves \eqref{Galerkin-form} with the angular-momentum truncation $L\in\N$.
We will provide an error estimate for the approximate error with respect to $L$. 

Due to the coercivity of $a_L(\cdot, \cdot)$ stated in Lemma \ref{lem-equ}, the approximate eigenvalues $E_{\lmax}$ have a lower bound. 
Note that we search the eigenvalue approximations in a finite energy window that contains the eigenvalues to be calculated, we will have limiting points in this region.
We will show that any limiting point is an eigenvalue of the original problem \eqref{model-weak-eq} and possesses some spectral convergence rate.

Let $E_{\infty}\in\R$ be any accumulation point, that is,
there exists a subsequence $\{E_{\lmax_j}\}_{j\in\Z_+}$ converge to $E_{\infty}$
\begin{eqnarray}
\label{eigenvalue-convergence}
\lim\limits_{j\rightarrow \infty} \big|E_{\lmax_j}-E_{\infty}\big|=0.
\end{eqnarray}
Then it follows from the coercivity in Lemma \ref{lem-equ} that 
\begin{eqnarray}
\label{eigenfunction-bound}
E_{L_j} = a_{L_j} (u_{L_j}, u_{L_j}) \ge C \|u_{L_j}\|^{2}_{\delta} ,
\end{eqnarray}
which indicates that $u_{L_j}$ is bounded in $H^{\delta}(\mathbb{R}^3)$. 
Consequently, there exists a weakly convergent subsequence $\{u_{L_{j_k}}\}_{k\in\Z_+}$, such that
\begin{eqnarray}
\label{weak-convergence}
u_{L_{j_k}} \rightharpoonup u_{\infty} \quad\text{in} ~ H^{\delta}(\mathbb{R}^3) \quad \text{as}~ k\rightarrow \infty.
\end{eqnarray} 
For simplicity of the presentation, we will still denote the weakly converged subsequence $u_{L_{j_k}}$ by $u_{\lmax}$ in the following.

The next theorem is the main result of this paper, which shows that the limiting pair $(E_{\infty},u_{\infty})$ is an eigenpair of the eigenvalue problem \eqref{model-weak-eq}, 
and the KKR approximations (i.e. the solutions of \eqref{Galerkin-form} or \eqref{kkr}) converge to the eigenpair super-algebraically fast as $\lmax$ increases.
The proof of this theorem is given in \ref{append-convergence}.

\begin{theorem}
\label{them-convergence}
Any limiting pair $(E_{\infty}, u_{\infty})$ is an eigenpair of \eqref{model-weak-eq}, that is
\begin{eqnarray}
\label{convergence}
a(u_{\infty},v) = E_{\infty} (u_{\infty}, v) \qquad \forall ~v\in H^{1}(\mathbb{R}^3) .
\end{eqnarray}
Moreover, if the asymptotic assumption \eqref{assumption} is satisfied and $\lmax$ is sufficiently large, then there exists a positive constant $C_{s}$ depending only on $u_{\infty}$ and $s$ such that
\begin{eqnarray}
\label{convergent-rate-eq}
|E_{\lmax}-E_{\infty}| + \|u_{\lmax}-u_{\infty}\|_{\delta} \le C_{s} \lmax^{1-s}
\qquad \forall~s>1.
\end{eqnarray}
\end{theorem}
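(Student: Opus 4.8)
The plan is to argue in two stages. In the first stage I identify the limiting pair $(E_{\infty},u_{\infty})$ as a genuine eigenpair of \eqref{model-weak-eq} by passing to the limit in the discrete equation along the weakly convergent subsequence \eqref{weak-convergence}. In the second stage I establish the superalgebraic rate \eqref{convergent-rate-eq} by a C\'ea/Strang-type quasi-optimality argument whose right-hand side is the best-approximation error of the (smooth) eigenfunction $u_{\infty}$ by the angular-momentum-truncated trial space $\mathcal U_L$, which decays faster than any power of $L$, together with the standard quadratic relation between eigenvalue and eigenfunction errors.

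For the first stage I read \eqref{Galerkin-form} in its Petrov--Galerkin form $a_{\delta}(u_L,\varphi)=E_L(u_L,\varphi)$ for $\varphi\in\mathcal V_L(E_L)$, using that the elements of $\mathcal V_L(E_L)$, being piecewise $H^1$ and genuinely continuous across every $\Gamma_n$, actually lie in $H^1(\mathbb R^3)$. Three ingredients are needed. (i) The jumps of $u_{\infty}$ vanish: the $H^{\delta}$-bound \eqref{eigenfunction-bound} makes $[u_L]_{\Gamma_n}$ bounded in $H^{1/2}(\Gamma_n)$, hence a subsequence converges strongly in $L^2(\Gamma_n)$; but the weak-continuity condition \eqref{mortar-value} forces all its spherical-harmonic moments of order $\le L$ to vanish, so by completeness of $\{Y_{\ell m}\}$ the $L^2$-limit is $0$, and since the trace is weakly continuous this limit equals $[u_{\infty}]_{\Gamma_n}$; thus $u_{\infty}\in H^1(\mathbb R^3)$. (ii) For fixed $v\in C_c^{\infty}(\mathbb R^3)$, integrating the strong equation \eqref{bvp-schrodinger} against $v$ over $\Omega^{\rm I}$ and over each $\Omega_n$ and summing yields $a_{\delta}(u_L,v)=E_L(u_L,v)$ up to boundary contributions $\sum_n\int_{\Gamma_n}[\partial u_L/\partial r_n]\,v^{*}$; these involve only the $\ell>L$ angular sector (both $u_L$ and its normal derivative are weakly continuous) and are driven by the addition-theorem tails of the neighbouring Hankel expansions \eqref{expansion-h-j}, hence tend to zero, the quantitative form of this estimate reappearing below as the consistency term for the rate. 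Letting $L\to\infty$ (weak $H^{\delta}$-convergence handles $a_{\delta}(u_L,v)$, and $E_L\to E_{\infty}$ with weak $L^2$-convergence handle the right side) gives $a(u_{\infty},v)=E_{\infty}(u_{\infty},v)$, hence \eqref{convergence} by density. (iii) Nontriviality: the exponential decay of the Hankel tails makes the mass of $u_L$ near infinity uniformly small, so with the local compact embedding of $H^{\delta}$ into $L^2$ the weak convergence upgrades to strong $L^2$-convergence; combined with $\|\mathcal M u_L-u_L\|_{\delta}\to 0$ (the terms defining $\mathcal M$ lie in the $\ell>L$ sector and are superalgebraically small) and the bounds \eqref{M-eq}, the normalization $(u_L,\mathcal M u_L)=1$ passes to the limit and gives $\|u_{\infty}\|_{L^2}=1$, so $u_{\infty}\neq 0$.

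For the rate, the central object is a quasi-best approximation $\tilde u_L\in\mathcal U_L(E_{\infty})$ of $u_{\infty}$. Since $u_{\infty}$ is an eigenfunction of $-\Delta+V$ with $V|_{\Omega_n}\in C^{\infty}((0,R_n)\times\mathbb S^2)$ and $V\equiv 0$ on $\Omega^{\rm I}$, elliptic (Kato-type) regularity shows that the angular content of $u_{\infty}$ — the spherical-harmonic coefficients of its restriction to concentric spheres inside each $\Omega_n$, and of its multipole expansion $\sum_n\sum_{\ell m}b^n_{\ell m}H_{\ell m}(\bm r_n;E_{\infty})$ in $\Omega^{\rm I}$ — decays faster than any power of $\ell$. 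Truncating these expansions at $\ell\le L$ and filling in the atomic spheres through the $\phi^n_{\ell m,L}(\cdot;E_{\infty})$-construction \eqref{trial-basis} produces $\tilde u_L\in\mathcal U_L(E_{\infty})$ with $\|u_{\infty}-\tilde u_L\|_{\delta}\le C_s L^{1-s}$ for every $s>1$ (the exponent $1-s$ being the $H^1(\mathbb S^2)$-truncation rate of a $C^{\infty}$ function on the sphere); here one also uses that the addition-theorem coupling \eqref{expansion-h-j} realising the flow of the $\phi$-basis into neighbouring spheres converges superalgebraically, controlled by the ratio of the ring width to the inter-atomic distances (which is $<1$ for non-overlapping spheres), and that the radial matching is stable uniformly in $\ell$ (the assumption \eqref{assumption}, respectively its $L\to\infty$ limit, excludes $\chi^n_{\ell}(R_n;E)=0$, while for large $\ell$ the matching is automatically benign). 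Replacing the energy parameter $E_{\infty}$ by $E_L$ in the trial space perturbs $\tilde u_L$ by an amount controlled through the $z$-continuity of the basis functions $\phi^n_{\ell m,L}(\cdot;z)$, uniform over $\ell\le L$, which is dominated by the smallness just obtained.

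Finally I assemble the estimate. Lemma \ref{lem-equ} provides, for $L$ large, the symmetry, coercivity and continuity of $a_L$ on $\mathcal U_L(E_L)$ with constants independent of $L$; together with the consistency terms identified above — the jump terms $\int_{\Gamma_n}[\partial u_L/\partial r_n]\,\varphi^{*}$ and the $(\mathcal M-\mathrm{id})$-terms, both living in the $\ell>L$ sector and hence superalgebraically small — a standard Strang argument gives $\|u_L-u_{\infty}\|_{\delta}\le C\bigl(\inf_{v\in\mathcal U_L(E_L)}\|u_{\infty}-v\|_{\delta}+\text{consistency}\bigr)\le C_s L^{1-s}$; a preliminary crude bound $\|u_L-u_{\infty}\|_{\delta}\to 0$ (from $E_L\to E_{\infty}$, $\|\mathcal M u_L-u_L\|_{\delta}\to 0$, the weak $H^{\delta}$-convergence and the coercivity of $a_{\delta}$) closes the circular dependence between this bound and $|E_L-E_{\infty}|$ that enters through the trial-space perturbation. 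The eigenvalue error then follows from the familiar quadratic identity: expanding $E_L-E_{\infty}=a_L(u_L,u_L)-a(u_{\infty},u_{\infty})$ and using the exact and discrete eigen-equations to cancel the terms linear in $u_L-u_{\infty}$ leaves $O(\|u_L-u_{\infty}\|_{\delta}^2)$ plus consistency, i.e.\ $\le C_s L^{1-s}$; adding the two bounds gives \eqref{convergent-rate-eq}. I expect the main obstacle to be the third-paragraph construction of $\tilde u_L$: proving the superalgebraic decay of the angular content of $u_{\infty}$ up to and including the nuclear singularity, simultaneously with the superalgebraic convergence of the structure-constant coupling in the $\phi$-basis and the $\ell$-uniform stability of the radial matching, and then carrying the bookkeeping of the $E_L$-versus-$E_{\infty}$ perturbation of the trial space through the whole argument. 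By contrast the passage to the limit in the first stage and the variational-crime consistency terms are comparatively routine, once one sees that $\mathcal M v-v$ and the relevant jumps live entirely in the $\ell>L$ angular sector.
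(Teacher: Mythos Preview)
Your proposal is correct and follows the same overall architecture as the paper: pass to the limit in the discrete equation using weak $H^{\delta}$-convergence and vanishing boundary jumps for \eqref{convergence}; then, for the rate, combine a best-approximation/interpolation estimate in $\mathcal U_L$, a quasi-optimality argument from the coercivity of $a_L$, and a quadratic eigenvalue--eigenfunction error relation to close the circular dependence between $|E_L-E_{\infty}|$ and $\|u_L-u_{\infty}\|_{\delta}$. The differences are in packaging rather than substance. First, the paper organises Stage~2 through the Babu\v{s}ka--Osborn machinery: it introduces solution operators $T$ and $T_L:=T_L^{E_L}$ and spectral projections $\mathscr E,\mathscr E_L$ via a contour integral around $E^{-1}$, reduces the eigenfunction error to $\|T-T_L\|$ on $\mathcal R(\mathscr E)$ (their \eqref{spectral-pro}), and proves three lemmas that correspond precisely to your ingredients (interpolation error $\|u-I_{L,E}u\|$, operator error $\|Tu-T_Lu\|$, and the quadratic bound $|E-E_L|\le C_s(L^{1-s}+\|u-u_L\|_{\delta}^{2})$). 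This buys a clean treatment of possible eigenvalue multiplicity and sidesteps the bookkeeping you flag as the ``circular dependence''. Your direct C\'ea/Strang route is a legitimate and somewhat more elementary alternative, but you should be aware that for eigenvalue problems it requires extra care beyond the source-problem version. Second, and more substantively, the paper does \emph{not} invoke interior angular regularity of $u_{\infty}$ near the nuclei (where Coulomb singularities may live): its interpolation operator $I_{L,z}$ is defined purely from the multipole expansion in $\Omega^{\rm I}$, and the error inside each $\Omega_n$ is then controlled by the trace on $\Gamma_n$ via the well-posedness of the interior boundary value problem, which is exactly what the asymptotic assumption \eqref{assumption} guarantees (see their estimate of $P_2$). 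This is simpler and more robust than appealing to Kato-type regularity up to the nuclear singularity, and I would recommend adopting it if you write out the details.
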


\begin{remark}
Note that the estimate is not restricted to the case of single eigenvalue, but for general multiple eigenvalues.
Moreover, the result can also be generalized to problems with non-spherical potentials, as the spherically symmetric condition is not essential in the proof.
\end{remark}

\section{Numerical experiments}
\label{sec:numerics}
\setcounter{equation}{0}

In this section, we will present some numerical results to support our theory.
All the experiments are performed on a PC via Matlab codes.
To test the convergence with respect to the angular-momentum truncation $L$, the result obtained by using a sufficiently large $L$ is taken to be the reference (exact solution).

\vskip 0.3cm

\noindent
{\bf Example 1} (spherical potentials).
Consider a system with four atoms located at 
$\bm{R}_1=(0.6, 0.6, 1)$, $\bm{R}_2=(0.6, 1.2, 1)$, $\bm{R}_3=(1.2, 0.6, 1)$ and $\bm{R}_4=(1.2, 1.2, 1)$, respectively.
The potential takes the form of \eqref{mt-potential} with the same radial components
\begin{eqnarray}\nonumber
v_1(r) = v_2(r) = v_3(r) = v_4(r) = \left\{
\begin{array}{cc}
\displaystyle -\frac{10}{r} + \frac{10}{R} & |r|\le R \\[2ex]
\displaystyle 0 & \rm{otherwise}
\end{array}\right. 
\end{eqnarray}
with $R=0.2$.
We refer to Figure \ref{fig-coul-potential} 
for a schematic plot of the atomic configuration (four atomic spheres centred at $\pmb{R}_n~(n=1,\cdots,4)$ and the interstitial region $\Omega_{\rm I}$).

We solve the KKR equation by the ``root-tracing" methods.
More precisely, we scan some energy window to find the root of the determinant $S(E_L)$ with some given $L$.
We show in Figure \ref{fig-determinant-coul} the determinant as a function of energy parameter $z$ in the region of interest, with $L=3$.
We then present the decay of errors with respect to the angular-momentum truncation $L$ for the first eigenvalue in Figure \ref{fig-err-coul}. 
We observe that the error decays exponentially fast as $L$ increases, which is consistent with our theoretical result.

\begin{figure}[htbp]
\centering
\subfloat[]{
	\includegraphics[width=0.25\textwidth]{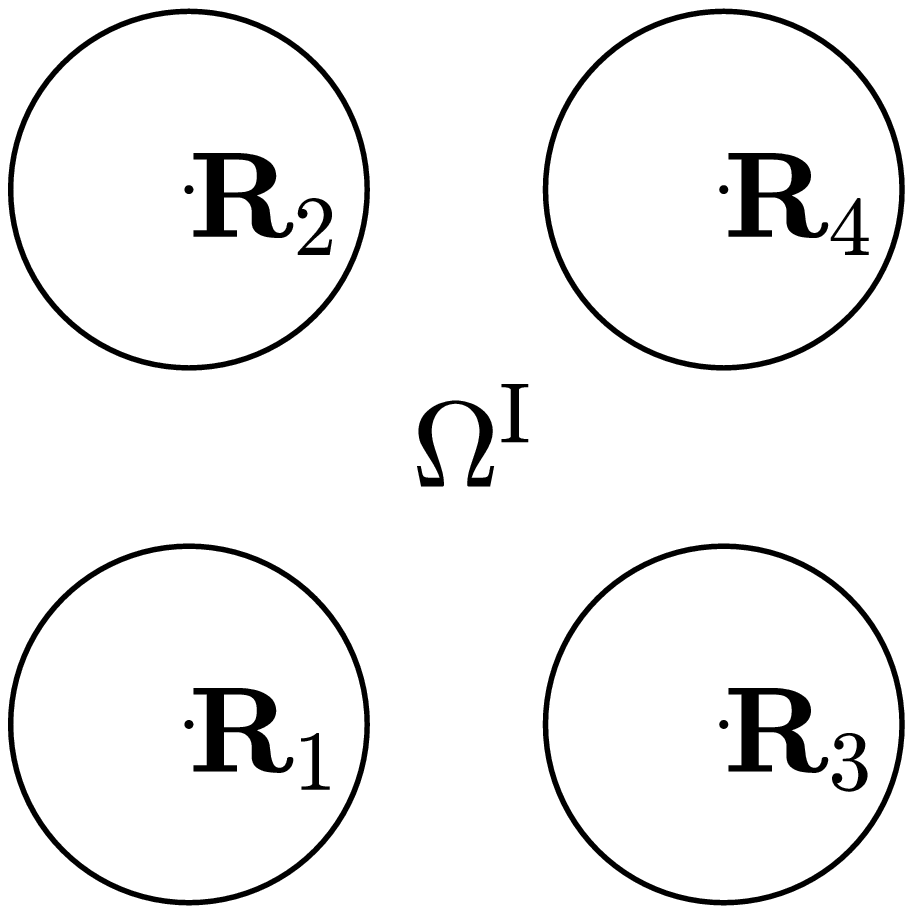}
	\label{fig-coul-potential}
}
\subfloat[]{
	\includegraphics[width=0.35\textwidth]{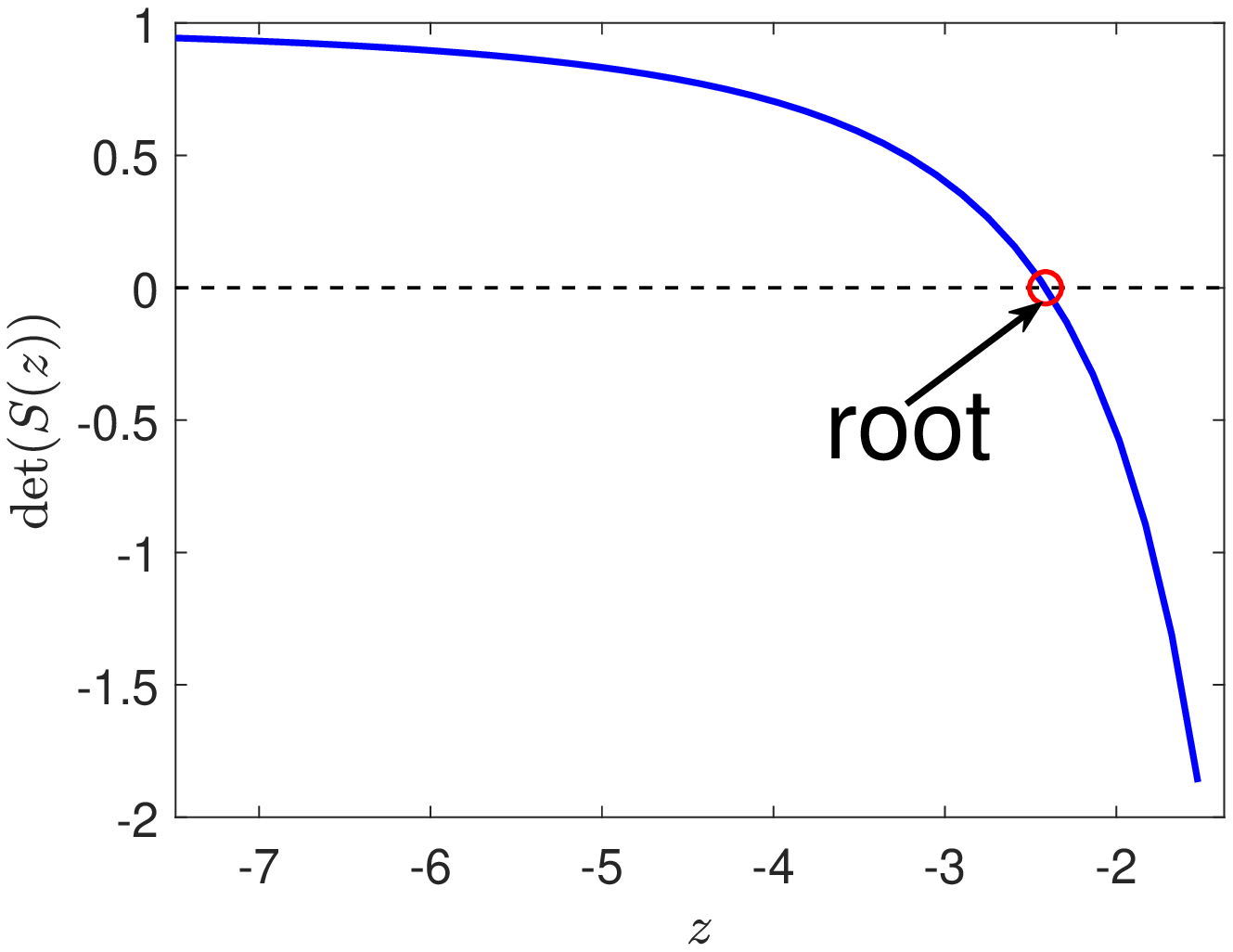}
	\label{fig-determinant-coul}
}
\subfloat[]{
	\includegraphics[width=0.35\textwidth]{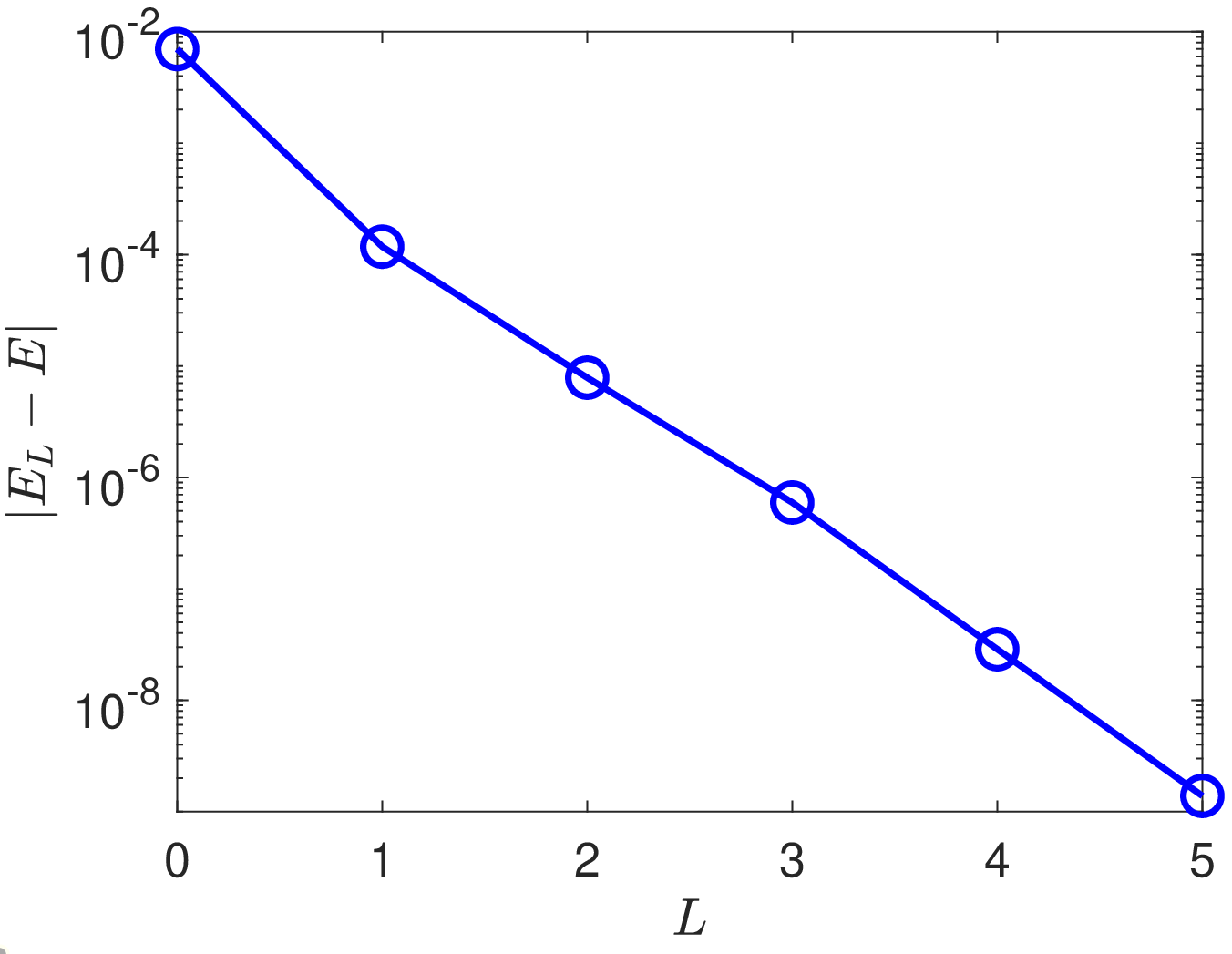}
	\label{fig-err-coul}
}	
\caption{Example 1. (a): Domain decomposition. (b): Solving the KKR equation by ``root-tracing". (c): Decay of the numerical errors with respect to angular-momentum truncation.}
\end{figure}

\vskip 0.3cm

\noindent
{\bf Example 2} (non-spherical potentials).
Consider a system with two atoms located at $\bm{R}_1=(0.5, 1, 1)$ and $\bm{R}_2=(1.5, 1, 1)$.
The radii for the atomic spheres are $R_1=0.35$ and $R_2=0.3$, respectively (see Figure \ref{fig-Gaussian-potential}).
We put non-spherical potentials in both atomic spheres, which has non-vanishing angular components for $\ell=1$.
In particular, the potential is given by
\begin{eqnarray}
\label{total-potential}
V(\bm{r}) = \left\{
\begin{array}{ll}
\displaystyle v^{1}_{00}(r_1) Y_{00} + \sum^{1}_{m=-1} v^{1}_{1m}(r_1) Y_{1,m}(\hat{\bm{r}}_1) & \quad\bm{r}\in \Omega_1, \\[1ex]
\displaystyle v^{2}_{00}(r_2) Y_{00} + \sum^{1}_{m=-1} v^{2}_{1m}(r_2) Y_{1,m}(\hat{\bm{r}}_2) & \quad\bm{r}\in \Omega_2, \\[2ex]
\displaystyle 0 & \quad\bm{r}\in \Omega^{\rm I} ,
\end{array}\right.
\end{eqnarray}
with the Gaussian-type radial parts
\begin{eqnarray}
\label{potential-gaussian}
v^{n}_{\ell m}(r) = c^{n}_{\ell m} r^{\ell} \left( e^{-(r/\sigma^{n}_{\ell m})^2} - e^{-(R_n/\sigma^{n}_{\ell m})^2} \right) \qquad {\rm for} ~ n=1,2, ~\ell=0,1, ~|m|\le \ell ,
\end{eqnarray}
where the coefficients are given in Table \ref{table-coef}.

\begin{table}  
\centering  
\begin{tabular}{ccccccccc}   
\toprule[1pt]  
\rowcolor[gray]{1} 
$(n,\ell,m)$ &$(1,0,0)$ &$(1,1,-1)$ &$(1,1,0)$  &$(1,1,1)$ &$(2,0,0)$ &$(2,1,-1)$ &$(2,1,0)$  &$(2,1,1)$ \\  
\midrule  
$c^{n}_{\ell m}$   &-700  &-30  &-20  &-15  &-720 &-70 &-30 &-20 \\  
$\sigma^{n}_{\ell m}$  &0.1   &0.05  &0.04  &0.045  &0.1 &0.04 &0.03 &0.04 \\  
\bottomrule[1pt]  
\end{tabular}  
\caption{The coefficients for the potential in \eqref{total-potential} and \eqref{potential-gaussian}.}
\label{table-coef}
\end{table} 

We use the ``root-tracing" method to solve the KKR equation, and show the behavior of determinant (for $L=3$) in Figure \ref{fig-determinant-nonsym}. 
The decay of numerical errors with respect to the angular-momentum truncation $L$ for the first eigenvalue is presented in Figure \ref{fig-err-nonsym}, from which we observe a spectral convergence rate.

\begin{figure}[htbp]
\centering
\subfloat[]{
	\includegraphics[width=0.25\textwidth]{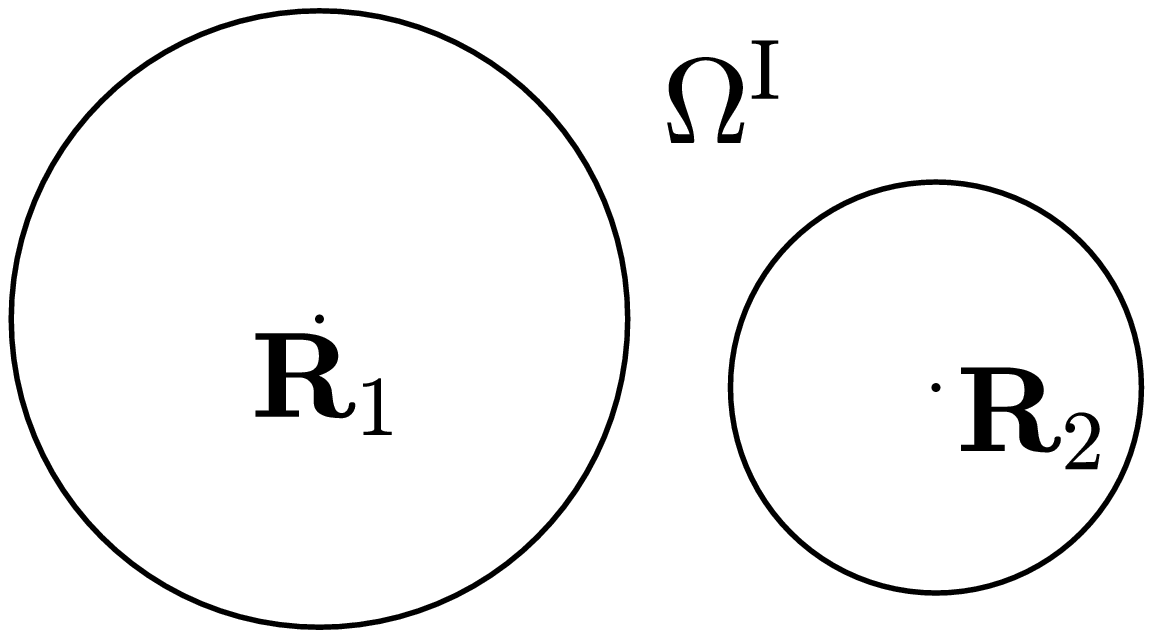}
	\label{fig-Gaussian-potential}
}
\subfloat[]{
	\includegraphics[width=0.35\textwidth]{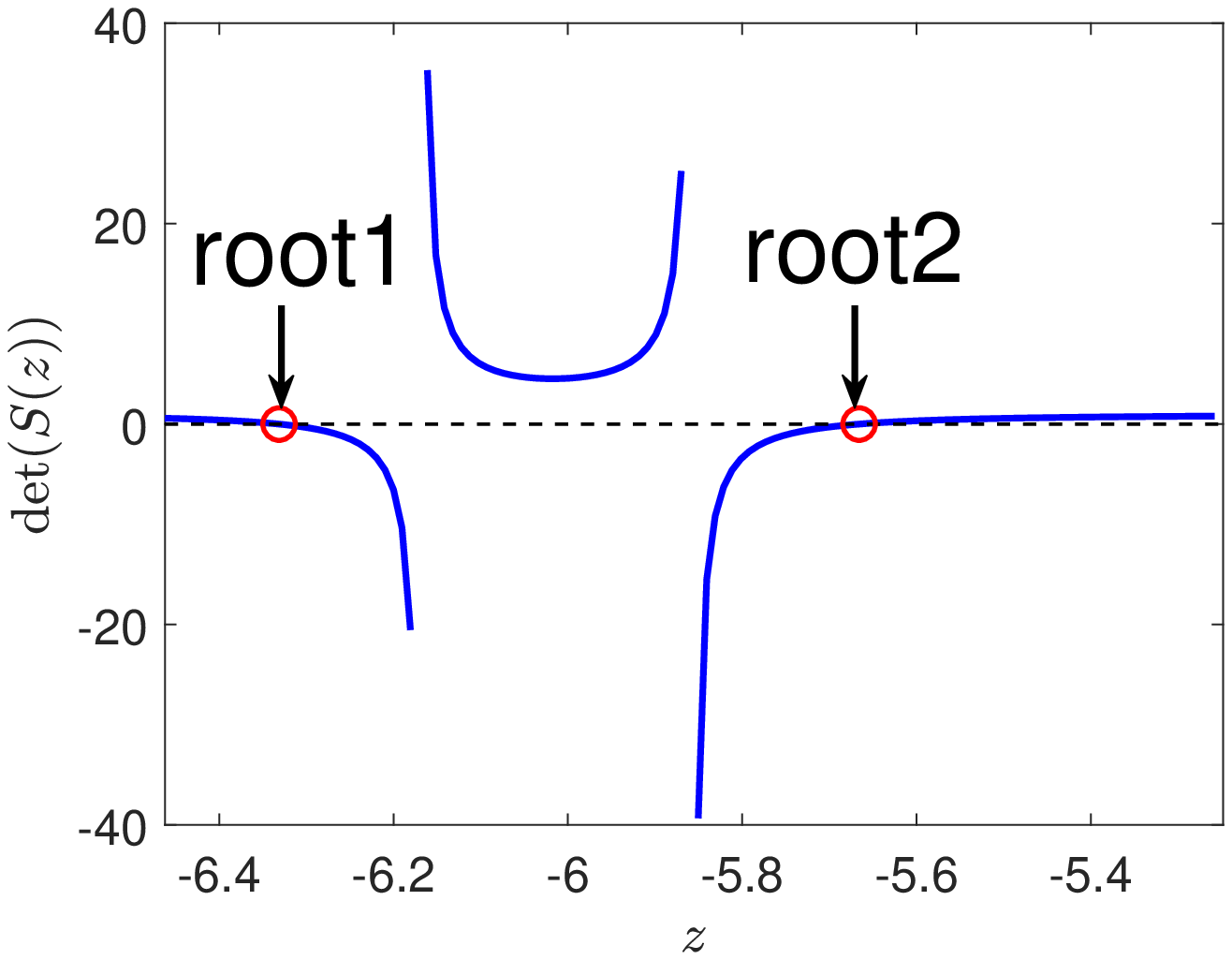}
	\label{fig-determinant-nonsym}
}
\subfloat[]{
	\includegraphics[width=0.35\textwidth]{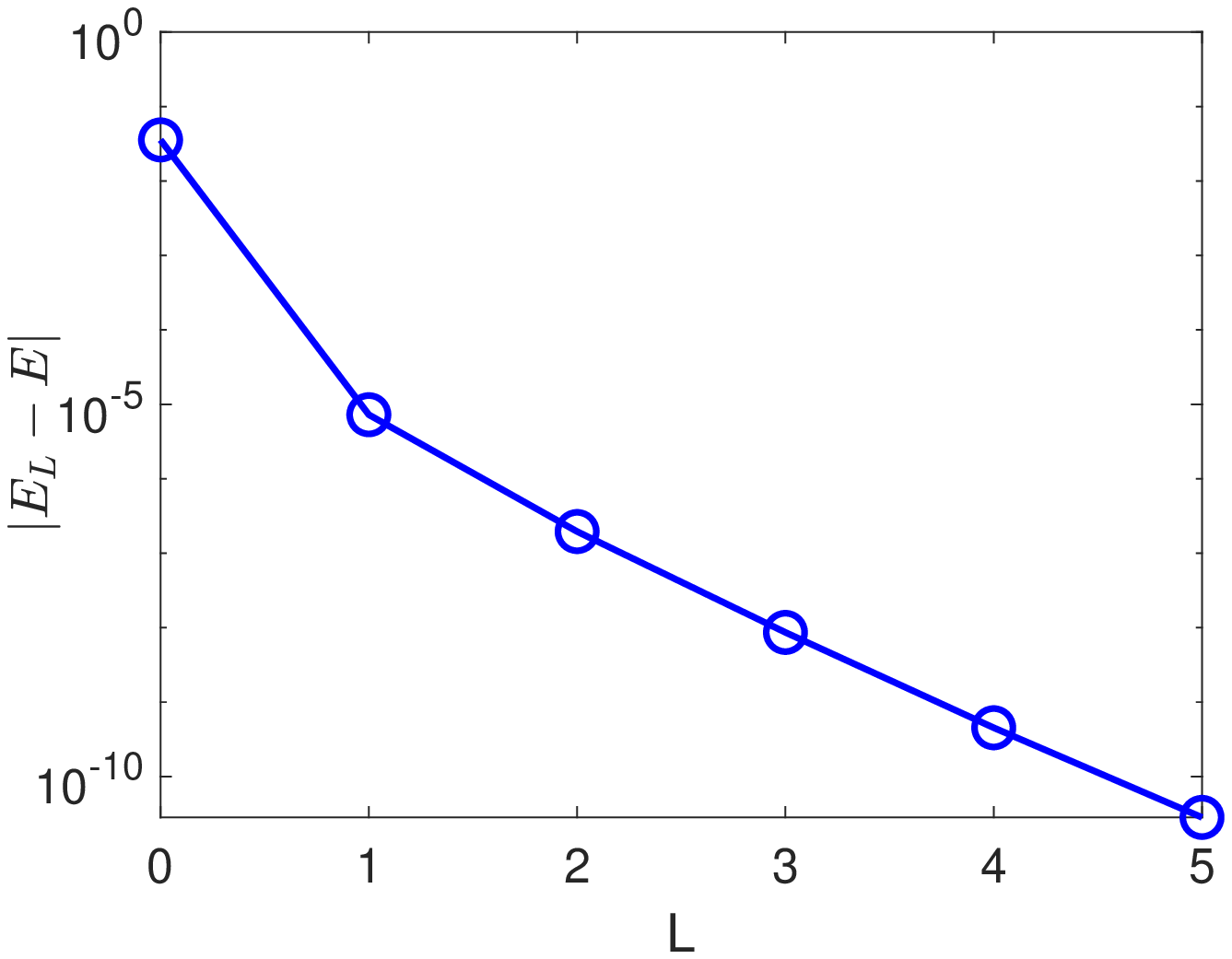}
	\label{fig-err-nonsym}
}		
\caption{Example 2. (a): Domain decomposition. (b): Solving the KKR equation by ``root-tracing". (c): Decay of the numerical errors with respect to angular-momentum truncation.}
\end{figure}


\section{Concluding remarks}
\label{sec:conclusions}
\setcounter{equation}{0}

In this paper, we investigate the KKR secular equations and justify the spectral convergence rate with respect to the truncation of the angular-momentum sums.
Our theory works for almost all MST methods (not only the KKR equation but also other forms) due to their mathematical equivalence in the angular-momentum representations.
We further point out that since the MST methods are essentially Green's function methods, the angular-momentum truncation on the wave function expansion can also be viewed as a truncation of the free electron Green's function expansion.
In particular, it approximates \eqref{free-green-angular-momentum} by
\begin{eqnarray*}
G_{0}(\bm{r}, \bm{r}';z) \approx \sum_{\ell\leq L} \ylm g_{\ell}(r,r';z)  Y_{\ell m}(\hat{\bm{r}}') .
\end{eqnarray*}
We will present an analysis based on the approximation of Green's function in our future work.

The formalism and theory in this paper are made within the muffin-tin approximation of the potentials, which confine the potentials in non-overlapping atomic spheres (scattering cells).
Although the muffin-tin approximation has been appreciated in electronic structure calculations of metals and many of their alloys, it is necessary to generalize the theory to non-muffin-tin cases (space-filling potential cells) for systems with low rotational symmetry, for example, covalently bonded materials, surfaces, various kinds of impurities, and so on \cite{gonis00}.
The extension of our numerical analysis to MST methods with general space-filling potential cells is highly non-trivial, which will be studied in another work.

\appendix
\renewcommand\thesection{\appendixname~\Alph{section}}


\section{Proofs}
\label{append-proofs}

\subsection{Proof of Lemma \ref{lem-weak-continuous}}
\label{append-weak-continuity}
\renewcommand{\theequation}{A.1.\arabic{equation}}
\setcounter{equation}{0}  

\begin{proof}
Let $E_L$ be the solution of the KKR secular equation \eqref{kkr}, and $\ul$ be given by \eqref{kkr-wavefun} with the coefficients $a^{n}_{\ell m}$ and $b^{n}_{\ell m}$ satisfying \eqref{eq:alm} and \eqref{multi-incoming-scattering}.
To show that $u_L\in\mathcal{W}_L(E_L)$, it is sufficient to demonstrate that $u_L$ and its first derivative are weakly continuous through $\gamn$.
We first evaluate the jump of $u_L$ on $\Gamma_n$ by using \eqref{expansion-h-j} and \eqref{coef-in-sc}
\begin{align*}
& \hskip -0.2cm
[u_L]_{\Gamma_n}= ~\ul^{+}|_{\gamn} - \ul^{-}|_{\gamn} \\[1ex]
& = \sumlL a^{n}_{\ell m} j_{\ell}(\sqrt{E_{\lmax}} R_n) Y_{\ell m}(\hat{{\bm r}}_{n}) 
- \sumnn \sum_{\ell'\le L} b^{n'}_{\ell' m'} H_{\ell'm'}({\bm r}_{n'};E_{\lmax}) \bigg|_{\gamn}  \\[1ex]
& = \sumlL a^{n}_{\ell m} j_{\ell}(\sqrt{E_{\lmax}} R_n) Y_{\ell m}(\hat{{\bm r}}_{n}) 
- \sumlm \left( \sumnn \sum_{\ell'\le L} g_{\ell m,\ell'm'}^{nn'}(E_{\lmax}) b^{n'}_{\ell'm'} \right)
j_{\ell}(\sqrt{E_{\lmax}} R_n) Y_{\ell m}(\hat{{\bm r}}_{n})  
\\[1ex]
& = -\sum_{\ell>\lmax} a^{n}_{\ell m} j_{\ell}(\sqrt{E_{\lmax}}R_n) Y_{\ell m}(\hat{{\bm r}}_{n}).
\end{align*}
We then immediately obtain $\int_{\Gamma_n} [u_L]_{\Gamma_n}Y_{\ell m} =0~(\forall~\ell\leq L)$ by the orthogonality of the spherical harmonic functions. 
Similarly, we have that for the first derivative, $\int_{\Gamma_n} [\partial u_L/\partial r_n]_{\Gamma_n} Y_{\ell m} =0~(\forall~\ell\leq L)$.
Therefore, $u_L \in \mathcal{W}_L(E_L)$.
	
Next, we will show that if $\mathcal{W}_L(E_L)\backslash \{0\} \ne\emptyset$, then $E_L$ solves \eqref{kkr} and any $0\neq u_{L}\in \mathcal{W}_{\lmax}(E_{\lmax})$ can be written as \eqref{kkr-wavefun}.
Since $u_L\in\mathcal{W}_{\lmax}(E_{\lmax})$, we can write it as
\begin{eqnarray}\nonumber
u_{L}(\bm r) = \left\{
\begin{array}{ll}
\displaystyle \sum_{\ell\le \lmax} A^{n}_{\ell m} \zeta^{n}_{\ell m}(\bm{r}_{n};E_{\lmax})  & {\rm in} ~ \omen~(n=1,\cdots, N), \\[3ex]
\displaystyle \sumn \sum_{\ell \le \lmax} B^{n'}_{\ell m} H_{\ell m}(\bm{r}_{n'};E_{\lmax}) & \text{in}~\omeout,
\end{array}
\right.
\end{eqnarray}
with $A^{n}_{\ell m}$ and $B^{n}_{\ell m}$ the coefficients and $\zeta^{n}_{\ell m}$ given by \eqref{radial-component}.
We then have
\begin{align*}
[u_L]_{\Gamma_n} 
&= \sumlL A^{n}_{\ell m} \zeta_{\ell m}^{n}(\bm{r}_{n};\laml) \bigg|_{\gamn} - \sumn \sumlL B^{n'}_{\ell m} H_{\ell m}(\bm{r}_{n'};\laml) \bigg|_{\gamn} 
\\[1ex]
& = \sumlL A^{n}_{\ell m} \left( j_{\ell}(\sqrt{\laml}R_n) + t^{n}_{\ell}(\laml) h_{\ell}(\sqrt{\laml}R_n) \right)  Y_{\ell m}(\hat{{\bm r}}_{n}) 
- \sumlL B^{n}_{\ell m} h_{\ell}(\sqrt{\laml}R_n)  Y_{\ell m}(\hat{{\bm r}}_{n}) \\[1ex]
& \quad - \sum_{\ell m} \left( \sumnn \sum_{\ell'\le L} g_{\ell m,\ell'm'}^{nn'}(\laml) B^{n'}_{\ell'm'} \right) 
j_{\ell}(\sqrt{\laml}R_n)  Y_{\ell m}(\hat{{\bm r}}_{n}),
\end{align*}
which together with the weak continuity of $\ul$ (since $\ul\in \mathcal{W}_{\lmax}(E_{\lmax})$) implies
\begin{align}\nonumber
\left( A^{n}_{\ell m} - \sumnn \sum_{\ell'\le L} g_{\ell m,\ell'm'}^{nn'}(\laml) B^{n'}_{\ell'm'}  \right) &j_{\ell}(\sqrt{\laml}R_n) 
+ \Big( t^{n}_{\ell}(\laml) A^{n}_{\ell m} - B^{n}_{\ell m} \Big)  h_{\ell}(\sqrt{\laml}R_n) = 0, \\ \label{proof-1-a}
&1\le n\le N, ~0\le \ell\le \lmax, ~|m|\le \ell.
\end{align}
Using a similar calculation and the weak continuity of $\partial \ul/\partial r_n$, we have
\begin{align}\nonumber
\left( A^{n}_{\ell m} - \sumnn \sum_{\ell'\le L} g_{\ell m,\ell'm'}^{nn'}(\laml) B^{n'}_{\ell'm'} \right) &j'_{\ell}(\sqrt{\laml}R_n) + \Big( t^{n}_{\ell}(\laml) A^{n}_{\ell m} - B^{n}_{\ell m} \Big) h'_{\ell}(\sqrt{\laml}R_n) = 0, \\ \label{proof-1-b}
&1\le n\le N, ~0\le \ell\le \lmax, ~|m|\le \ell.
\end{align}
Combining \eqref{proof-1-a}, \eqref{proof-1-b}, and the following Wronskian relation
\begin{eqnarray*}
\begin{vmatrix}
j_{\ell}(x) & h_{\ell}(x) \\
j'_{\ell}(x) & h'_{\ell}(x)
\end{vmatrix}
= \frac{i}{x^2} \ne 0,
\end{eqnarray*}
we have
\begin{eqnarray}
\label{proof-1-d}
\left\{
\begin{array}{l}
\displaystyle A^{n}_{\ell m} =  \sumnn \sum_{\ell'\le L} g_{\ell m, \ell'm'}^{nn'}(\laml) B^{n'}_{\ell'm'} \\[3ex]
B^{n}_{\ell m} = t^{n}_{\ell}(\laml) A^{n}_{\ell m}
\end{array}\right.
\quad 1\le n\le N, ~0\le \ell\le \lmax, ~|m|\le \ell.
\end{eqnarray}
Note that \eqref{proof-1-d} and $u_L\neq 0$ indicate that $\{A^{n}_{\ell m}\}$ is a non-trivial solution of \eqref{eq:alm} and hence $E_{\lmax}$ solves the KKR secular equation \eqref{kkr}.
This completes the proof.
\end{proof}

\subsection{Proof of Lemma \ref{lem-equ}}
\label{appen-bilinear}
\renewcommand{\theequation}{A.2.\arabic{equation}}
\setcounter{equation}{0}  

\begin{proof} [Proof of Lemma \ref{lem-equ} (a)]
In order to show the symmetry of $a_{L}(\cdot,\cdot)$, it is only necessary to show that the following equality holds for any basis function $\phi^n_{\ell m,L}$ (defined in \eqref{trial-basis})
\begin{multline}
\label{basis-sym}
\qquad
a_{L}(\phi^{n_1}_{\ell_1 m_1,\lmax},\phi^{n_2}_{\ell_2 m_2,\lmax})
= a_{L}(\phi^{n_2}_{\ell_2 m_2,\lmax}, \phi^{n_1}_{\ell_1 m_1,\lmax})^*  \\
\forall~1\le n_1,n_2\le N, ~0\le \ell_1,\ell_2\le \lmax, ~|m_1|\le\ell_1, ~|m_2|\le \ell_2 . \qquad
\end{multline} 
According to the definition of $a_L(\cdot,\cdot)$, \eqref{basis-sym} can be written in terms of $\varphi^n_{\ell m,L}$ (defined in \eqref{test-basis}) as
\begin{eqnarray}
\label{basis-sym-uv}
a_{\delta}(\phi^{n_1}_{\ell_1 m_1,\lmax},\varphi^{n_2}_{\ell_2 m_2,\lmax})
= a_{\delta}(\phi^{n_2}_{\ell_2 m_2,\lmax}, \varphi^{n_1}_{\ell_1 m_1,\lmax})^*.
\end{eqnarray}
Furthermore, since $a_{\delta}(\cdot,\cdot)$ is symmetric and the basis functions $\phi^n_{\ell m}$ and $\varphi^n_{\ell m}$ only differ in the regions $B_{n'}~(n'\neq n)$, it is only necessary for us to consider the above integrals (in \eqref{basis-sym-uv}) in $B_{n}$ for $1\le n\le N$.
In particular, we need to show that for any $1\le n\le N$,
\begin{subequations}
\begin{align}
\label{L2-sym}
& \int_{B_n} \phi^{n_1}_{\ell_1 m_1,\lmax} \varphi^{n_2 *}_{\ell_2 m_2,\lmax} = \int_{B_n} \phi^{n_2 *}_{\ell_2 m_2,\lmax} \varphi^{n_1}_{\ell_1 m_1,\lmax} 
\qquad\qquad {\rm and}
\\[1ex]
\label{H1-sym}
& \int_{B_n} \nabla\phi^{n_1}_{\ell_1 m_1,\lmax} \cdot \nabla \varphi^{n_2 *}_{\ell_2 m_2,\lmax} = \int_{B_n} \nabla \phi^{n_2 *}_{\ell_2 m_2,\lmax} \cdot \nabla \varphi^{n_1}_{\ell_1 m_1,\lmax} .
\end{align}
\end{subequations}
We will consider \eqref{L2-sym} and \eqref{H1-sym} for three cases separately in the following: (i) $n=n_1=n_2$, (ii) $n \ne n_1$ and $n \ne n_2$, and (iii) $n = n_1 \ne n_2$ (the same for $n = n_2 \ne n_1$).

The first case with $n=n_1=n_2$ is clear due to the fact that $\varphi^{n}_{\ell m,\lmax}=\phi^{n}_{\ell m,\lmax}$ in $B_n$.

For the second case with $n\ne n_1$ and $n\ne n_2$, \eqref{L2-sym} can be easily verified by the definitions \eqref{trial-basis}, \eqref{test-basis} and a direct calculation.
Then by using the definition of the cutoff function $\eta_{n}$ and the fact that $\phi^n_{\ell m}$ satisfies \eqref{bvp-schrodinger} on $B_{n'}~(\forall~n')$, we have
\begin{align}
\label{basis-sym-h1}
\nonumber
&\int_{B_n} \left( \nabla \phi^{n_1}_{\ell_1 m_1,\lmax} \cdot \nabla \varphi^{n_2 *}_{\ell_2 m_2,\lmax}  - \nabla \phi^{n_2 *}_{\ell_2 m_2,\lmax} \cdot \nabla\varphi^{n_1}_{\ell_1 m_1,\lmax} \right) 
\\[1ex]\nonumber
= &\int_{B_n}  \left( -\Delta \phi^{n_1}_{\ell_1 m_1,\lmax} \varphi^{n_2 *}_{\ell_2 m_2,\lmax} 
+ \varphi^{n_1}_{\ell_1 m_1,\lmax} \Delta\phi ^{n_2 *}_{\ell_2 m_2,\lmax}  \right) 
+ \int_{\partial B_n}  \left( \frac{\partial \phi^{n_1}_{\ell_1 m_1,\lmax} }{\partial r_n} \varphi^{n_2 *}_{\ell_2 m_2,\lmax} - \frac{\partial \phi^{n_2 *}_{\ell_2 m_2,\lmax} }{\partial r_n} \varphi^{n_1}_{\ell_1 m_1,\lmax} \right) 
\\[1ex]\nonumber
= & z\int_{B_n} \left( \phi^{n_1}_{\ell_1 m_1,\lmax} \varphi^{n_2 *}_{\ell_2 m_2,\lmax} - \phi^{n_2 *}_{\ell_2 m_2,\lmax} \varphi^{n_1}_{\ell_1 m_1,\lmax} \right)
+ \int_{r_n = R_n+r_n^{\rm b}} \left( \frac{\partial \phi^{n_1}_{\ell_1 m_1,\lmax} }{\partial r_n} \phi^{n_2 *}_{\ell_2 m_2,\lmax} - \frac{\partial \phi^{n_2 *}_{\ell_2 m_2,\lmax} }{\partial r_n} \phi^{n_1}_{\ell_1 m_1,\lmax} \right)
\\[1ex] 
& + \int_{\gamn} \left( \frac{\partial \phi^{n_1}_{\ell_1 m_1,\lmax} }{\partial r_n} \varphi^{n_2 *}_{\ell_2 m_2,\lmax} - \frac{\partial \phi^{n_2 *}_{\ell_2 m_2,\lmax} }{\partial r_n} \varphi^{n_1}_{\ell_1 m_1,\lmax} \right) .
\end{align}
Note that a simple calculation can indicate that each of the three terms in \eqref{basis-sym-h1} vanishes.
Therefore, \eqref{H1-sym} is also true for the second case.

For the last case with $n=n_1\ne n_2$, we can use similar arguments and the fact that $\phi^{n}_{\ell m,\lmax}=H_{\ell m}$ in $B_{n}$ to show  \eqref{L2-sym} and \eqref{H1-sym}.
\end{proof}

\begin{proof}[Proof of Lemma \ref{lem-equ} (b)]
It is only necessary to show that there exists a constant $0<\gamma<1$, independent of $L$, such that
\begin{eqnarray}
\label{norm-equ-control}
|a_{\delta}(u,\mollf u-u)|\le \gamma a_{\delta}(u,u)
\qquad \forall~ u\in \ulz .
\end{eqnarray}
We will consider the right-hand side and left-hand side of \eqref{norm-equ-control} respectively in the following.
	
Since $a_{\delta}(\cdot,\cdot)$ is coercive in any region, the right-hand side of \eqref{norm-equ-control} can be estimated by 
\begin{eqnarray}
\label{proof-1-e}
a_{\delta}(u,u) \ge\int_{\omeout} |\nabla u|^2 \ge \sum_{n=1}^{N} \int_{\widetilde{B}_n}  |\nabla u|^2,
\end{eqnarray}
where $\widetilde{B}_{n}=\big\{ {\bm r}\in\omeout ~\big|~ R_{n}<|{\bm r}_{n}|<R_{n}+2r_n^{\rm b} \big\}$.
In each $\widetilde{B}_{n}$, $u\in\ulz$ can be represented in the spherical coordinates with respect to ${\bm R}_n$ as
\begin{eqnarray}
\label{u-expand}
u({\bm r})=\sum_{\ell m} a^{n}_{\ell m} J_{\ell m}({\bm r}_n;z) + \sum_{\ell\le L} b^{n}_{\ell m} H_{\ell m}({\bm r}_n;z).
\end{eqnarray}
Using \eqref{u-expand} and $\Delta=\frac{1}{r^2} \frac{\partial}{\partial r}(r^2 \frac{\partial}{\partial r}) + \frac{1}{r^2} \Delta_{S^2}$ with $\Delta_{S^2}$ the Laplace-Beltrami operator on the unit sphere $S^2$, we have
\begin{align}
\label{sym-bilinear}
\nonumber
\int_{\widetilde{B}_n} |\nabla u|^2
& = \int_{R_n}^{R_n+2 r_n^{\rm b}} r^2\int_{S^2} \left| \frac{\partial u}{\partial r} \right|^2 
- \int_{R_n}^{R_n+2r_n^{\rm b}} \int_{S^2} u^* \Delta_{S^2}u 
\\[1ex]
& \ge \displaystyle \sum_{\ell>\lmax} |a_{\ell m}^{n}|^2 \int_{R_n}^{R_n+2 r_n^{\rm b}} \Big( r^2|z| |j'_{\ell}(\sqrt{z}r)|^2  
+ \ell(\ell+1) \big|j_{\ell}(\sqrt{z}r)\big|^2 \Big),
\end{align}
where all the $\ell\le L$ components are positive and dropped for the last inequality.

For the left-hand side of \eqref{norm-equ-control}, we have from the definition of $\mollf$ that
\begin{eqnarray}
\label{proof-1-g}
	a_{\delta}(u, \mollf u-u) = \sum_{n=1}^{N} \int_{B_n} \nabla u \cdot \nabla (\mollf u-u)^*.
\end{eqnarray}
By writing $\mollf u$ as
\begin{eqnarray*}
	(\mollf u)(\bm{r}) =\sum_{\ell\le L} \Big( a^{n}_{\ell m} J_{\ell m}({\bm r}_n;z) +  b^{n}_{\ell m} H_{\ell m}({\bm r}_n;z) \Big)
	+ \Big(1 - \eta_{n}(|{\bm r}_n|)\Big) \sum_{\ell>L} a^{n}_{\ell m} J_{\ell m}({\bm r}_n;z) ,
\end{eqnarray*}
we have that for each $B_n$, 
\begin{align}
\label{proof-1-f}
\nonumber
& \hskip -0.2cm
\int_{B_n} \nabla u \cdot \nabla (\mollf u-u)^* = \int_{R_n}^{R_n+r_n^{\rm b}} r^2\int_{S^2}  \frac{\partial(\mollf u-u)^*}{\partial r} \frac{\partial u}{\partial r} 
- \int_{R_n}^{R_n+r_n^{\rm b}} \int_{S^2} (\mollf u-u)^* \Delta_{S^2}u \\[1ex]\nonumber
= & \sum_{\ell>\lmax} |a^{n}_{\ell m}|^2  \int_{R_n}^{R_n+r_n^{\rm b}} \Big( -|z|\eta_{n}(r) r^2 |j'_{\ell}(\sqrt{z}r)|^2 
- \sqrt{z} \eta'_{n}(r) r^2 j'_{\ell}(\sqrt{z}r) j^*_{\ell}(\sqrt{z}r) 
\\[1ex]
&\qquad\qquad\qquad\qquad\quad 
- \ell(\ell+1) \int_{R_n}^{R_n+r_n^{\rm b}} \eta_{n}(r) \big|j_{\ell}(\sqrt{z}r)\big|^2  \Big) .
\end{align}
Then we have from \eqref{proof-1-f} and the Cauchy-Schwartz inequality that for any $\epsilon>0$
\begin{multline}
\label{sym-err}
\quad
\left|\int_{B_n} \nabla u \cdot \nabla (\mollf u-u)^*\right| 
\\
\leq \sum_{\ell>\lmax} |a^{n}_{\ell m}|^2  \int_{R_n}^{R_n+r_n^{\rm b}} \Big( (1+\epsilon)r^2|z| \big|j'_{\ell}(\sqrt{z}r)\big|^2 
+ \Big(\ell(\ell+1)+\frac{1}{\epsilon} \Big)  \big|j_{\ell}(\sqrt{z}r)\big|^2 \Big) .
\qquad
\end{multline}

Note that for $z<0$, $j_{\ell}(\sqrt{z}r)$ can be written by
\begin{eqnarray}
\label{analytic-bessel}
j_{\ell}(\sqrt{z}r) = i^{\ell} \frac{\sqrt{\pi}}{2^{\ell+1}} (kr)^{\ell} \sum_{m=0}^{\infty} \frac{k^{2m}}{m!\varGamma(m+\ell+3/2)}
\left( \frac{r}{2} \right)^{2m}
\end{eqnarray}
with $k=\sqrt{|z|}$ and $\varGamma$ denoting the Gamma function.
We can see that $|j_{\ell}(\sqrt{z}r)|$ and $|j'_{\ell}(\sqrt{z}r)|$ are increasing functions with respect to $r$ for any $\ell\in\N$.
By taking into accounts this property and \eqref{proof-1-e}, \eqref{sym-bilinear}, \eqref{proof-1-g} and \eqref{sym-err}, we can obtain \eqref{norm-equ-control} by appropriately choosing some $0<\epsilon<1$.
This completes the proof of Lemma \ref{lem-equ} (b1).

Lemma \ref{lem-equ} (b2) can be proved by almost the same arguments, and we skip the details for simplicity of the presentation. 
\end{proof}

\subsection{Proof of Theorem \ref{equivalence-them}}
\label{append-equivalence}
\renewcommand{\theequation}{A.3.\arabic{equation}}
\setcounter{equation}{0}  

\begin{proof}
Let $E_{L}$ be the solution of \eqref{kkr} and $\ul$ be given by \eqref{kkr-wavefun}.
We have from Lemma \ref{lem-weak-continuous} that $\ul\in \mathcal{W}_L(E_L)$, and hence both $\ul$ and its first derivative are weakly continuous. 
Then by using the expression \eqref{kkr-wavefun} and the weak continuity, we can see by a direct calculation that $(E_{\lmax},\ul)$ satisfies the equation \eqref{Galerkin-form}. 
	
Conversely, let $(\laml,\ul)\in\R\times\mathcal{U}_{\lmax}(E_{\lmax})$ be an eigenpair of \eqref{Galerkin-form}. 
According to Lemma \ref{lem-weak-continuous}, it is only necessary for us to show that $\ul\in \mathcal{W}_L(E_L)$.
By comparing the definitions of $\mathcal{U}_{\lmax}(E_{\lmax})$ and $\mathcal{W}_L(E_L)$, we only need to prove that $\partial \ul/\partial r_{n}$ is weakly continuous through $\Gamma_n$ for $1\le n\le N$.
First, integrating the left-hand side of \eqref{Galerkin-form} by parts gives
\begin{eqnarray}
\label{surface-integ-sum}
\sum_{n=1}^{N} \int_{\Gamma_{n}} v \left[ \frac{\partial\ul}{\partial r_{n}} \right]_{\gamn}  \dd\hat{\bm{r}}_{n} =0 \qquad \forall ~v\in\test.
\end{eqnarray}
Then we note that the basis functions $\varphi_{\ell m,\lmax}^{n}$ of $\test$ on the spherical surface $\Gamma_{n'}~(n'\neq n)$ can be written by
\begin{align}
\label{basis-surface}
\nonumber
\varphi_{\ell m,\lmax}^{n}(\bm{r};E_{L}) \Big|_{\Gamma_{n'}} 
& = \displaystyle \sum_{\ell'\le L} h_{\ell' }(\sqrt{E_L}R_{n'}) \Big(  \delta_{nn'} \delta_{\ell\ell'} \delta_{mm'} + \frac{j_{\ell'}(\sqrt{\laml}R_{n'}) }{h_{\ell'}(\sqrt{\laml}R_{n'})} \struc(\laml)  \Big) Y_{\ell'm'}(\hat{{\bm r}}_{n'})
\\[1ex]
& = \sum_{\ell'\le L}  h_{\ell' }(\sqrt{E_L}R_{n'}) P_{n\ell m,n'\ell'm'}(E_L) Y_{\ell'm'}(\hat{{\bm r}}_{n'}) 
\end{align}
with $P_{n\ell m,n'\ell'm'}$ given by \eqref{assump-inv}.
By substituting \eqref{basis-surface} into \eqref{surface-integ-sum} with $v(\bm{r})=\varphi_{\ell m,\lmax}^{n}(\bm{r};E_{L})$, we can derive from the assumption \eqref{assumption} (for asymptotic problem) that
\begin{eqnarray}
\label{comp-surface}
\int_{\Gamma_{n}} \left[ \frac{\partial\ul}{\partial r_{n}} \right]_{\gamn} Y_{\ell m}(\hat{{\bm r}}_{n}) \dd\hat{\bm{r}}_{n} = 0 \qquad \forall~ 1\le n\le N, ~0\le \ell\le L, ~|m|\le\ell .
\end{eqnarray}
This indicates the weak continuity of $\partial \ul/\partial r_{n}$ and completes the proof.
\end{proof}

\subsection{Proof of Theorem \ref{them-convergence}}
\label{append-convergence}
\renewcommand{\theequation}{A.4.\arabic{equation}}
\renewcommand{\thelemma}{A.\arabic{lemma}}

\setcounter{equation}{0}  
\setcounter{lemma}{0}

The proof of Theorem \ref{them-convergence} will be divided into two parts. First, we will show in \ref{append-convergence-a} that \eqref{convergence} holds, that is, the limiting pair $(E_{\infty},u_{\infty})$ solves the original problem \eqref{model-weak-eq}.
Then, we will justify the convergence rate \eqref{convergent-rate-eq} in \ref{append-convergence-b}.
Some auxiliary lemmas will be used, and we put their proofs in \ref{append-convergence-c}.

\subsubsection{Proof of \eqref{convergence}}
\label{append-convergence-a}

\begin{proof}
%
To show that $(E_{\infty},u_{\infty})$ satisfies \eqref{model-weak-eq}, we consider that for any $v\in H^{1}(\mathbb{R}^3)$,
\begin{align}
\label{proof-3.2-a}
\nonumber
&|a(u_{\infty},v)-E_{\infty}(u_{\infty},v)| = |a_{\delta}(u_{\infty},v)-E_{\infty}(u_{\infty},v)| \\[1ex]\nonumber
\le & |a_{\delta}(u_{\infty}-u_L,v)| + |a_{\delta}(u_L,v)-E_{L}(u_L,v)| + |(E_{\infty}-E_L) (u_L,v)| + |E_{\infty} (u_L-u_{\infty},v)| \\[1ex]\nonumber
\le  & C\left( \int_{\omeout} \left|\nabla (u_{\infty}-u_L)\right| \cdot \left| \nabla v\right| + \sum_{n=1}^{N} \int_{\omen} \left| \nabla (u_{\infty}-u_L)\right| \cdot \left| \nabla v\right| + \int_{\mathbb{R}^3} \big|V (u_{\infty}-u_L) v\big| \right.
\\[1ex]
& \qquad + |a_{\delta}(u_L,v)-E_{L}(u_L,v)| +
|E_{\infty}-E_L| + \|u_L-u_{\infty}\|_{L^2(\mathbb{R}^3)} \Bigg).
\end{align}

Since \eqref{weak-convergence} and the compact embedding
\footnote{It has been shown in \cite[Lemma I.1]{lions84} that the possible loss of the compactness in the unbounded domain occurs from the vanishing of the functions, or the splitting of the functions into at least two parts which are going infinitely away from each other.
In our case, we consider functions in spaces $\ulz$ with $L\in\N$, and the ``mass'' of the functions therein concentrates around the atomic sites.
Then it is not difficult to see that the above two possibilities of loss of the compactness can be ruled out.} 
imply that $u_L$ converges to $u_{\infty}$ in $L^3(\mathbb{R}^3)$, we can derive by using the H\"{o}lder inequality that for any $v\in H^{1}(\mathbb{R}^3)$,
\begin{eqnarray}
\label{V-convergence}
\lim_{L\rightarrow \infty} \int_{\mathbb{R}^3} \big|V (u_{\infty}-u_L) v\big| \le
\lim_{L\rightarrow \infty} \|V\|_{\ltwo} \|u_L-u_{\infty}\|_{L^3(\mathbb{R}^3)} \|v\|_{L^6(\mathbb{R}^3)} = 0 .
\end{eqnarray}

Note that $u_L \in\mathcal{U}_{L}(E_L)$ satisfies \eqref{bvp-schrodinger} with the energy parameter $z=E_L$ and $\mathcal{V}_{L}(E_{L}) \subset H^{1}(\mathbb{R}^3)$, we have that for any $v\in C_{c}^{\infty}(\mathbb{R}^3)$ and $v_L\in\mathcal{V}_{L}(E_{L})$,
\begin{multline}
\label{adelta_j}
\quad
a_{\delta}(u_L,v)-E_{L} (u_L,v) = a_{\delta}(u_L,v-v_L) - E_{L} (u_L, v-v_L) + a_{\delta}(u_L, v_L)-E_{L}(u_L,v_L)
\\[1ex]
= \sum_{n=1}^{N} \int_{\gamn}  \bigg[ \frac{\partial u_L}{\partial r_{n}}\bigg]_{\Gamma_n} (v-v_L)^* 
\le C \sum_{n=1}^{N} \|v-v_L\|_{H^{\frac{1}{2}}(\gamn)} \|u_L\|_{\delta} , 
\quad
\end{multline}
where the Cauchy-Schwartz inequality and trace theorem are used for the last inequality.
Using similar arguments as that in \eqref{surface-integ-sum}-\eqref{comp-surface}, we see that for any $v\in C_{c}^{\infty}(\mathbb{R}^3)$, there exists a $v_L\in\mathcal{U}_{L}(E_L)$ such that 
$\int_{\gamn}(v-v_L)Y_{\ell m}(\hat{\bm r}_n) = 0$ for $n=1,\cdots,N,~0\le\ell\le L,~|m|\le\ell$,
and hence $\sum_{n=1}^N\|v-v_L\|_{H^{\frac{1}{2}}(\gamn)}\rightarrow 0$ as $L\rightarrow\infty$.
We then obtain from \eqref{eigenfunction-bound} and \eqref{adelta_j} that
\begin{eqnarray}
\label{convergence-lim}
\lim\limits_{L\rightarrow \infty} a_{\delta}(u_L,v)-E_{L} (u_L,v) =0 \qquad \forall~ v\in C_{c}^{\infty}(\mathbb{R}^3).
\end{eqnarray} 
Since $C_{c}^{\infty}(\mathbb{R}^3)$ is dense in $H^{1}(\mathbb{R}^3)$, \eqref{convergence-lim} also holds for any $v\in H^{1}(\mathbb{R}^3)$.

By taking into accounts \eqref{proof-3.2-a}, \eqref{eigenvalue-convergence}, \eqref{weak-convergence}, \eqref{V-convergence} and \eqref{convergence-lim}, we obtain \eqref{convergence}.
\end{proof}

\subsubsection{Proof of \eqref{convergent-rate-eq}}
\label{append-convergence-b}

We will first introduce some operators and notations, then give some lemmas that are key to the proof, and finally provide the proof of \eqref{convergent-rate-eq} by using these lemmas.
For simplicity of the notations, we will use $(E,u)$ to denote $(E_{\infty},u_{\infty})$ hereafter.

Define the solution operator $	T:\ltwo\rightarrow \hone$ such that for $f\in \ltwo$
\begin{eqnarray*}
a(Tf,v)=(f,v)\qquad\forall~v\in \hone .
\end{eqnarray*}
For given $L\in\N$ and $z\in\R$, define the approximate solution operator $T_{L}^{z}:\ltwo\rightarrow \mathcal{U}_{L}(z)$ such that for $f\in \ltwo$
\begin{eqnarray*} 
a_{L}(T^{z}_{L}f,v)=(f,\mollf v)\qquad \forall~v\in \mathcal{U}_{L}(z) .
\end{eqnarray*}
Then the eigenvalue problems \eqref{model-weak-eq} and \eqref{Galerkin-form} are equivalent to $E Tu=u$ and $E_{L} T^{E_L}_{L_L} u_{L}=u_{L}$, respectively.
For simplicity, we will use $T_L$ to denote $T^{E_L}_{L}$ in the following.
	
Let $\sigma(T)$ be the spectrum of the solution operator $T$ and $\rho(T)$ be its resolvent set.
We have that $E^{-1}\in\sigma(T)$.
Let  $\gamma$ be a circle in the complex plane that is centered at $E^{-1}$ and does not enclose any other point of $\sigma(T)$. 
Define the following two operators by contour integrals
\begin{eqnarray}\nonumber
	\mathscr{E}=\frac{1}{2\pi i}\int_{\gamma} (z-T)^{-1} \dd z 
	\quad{\rm and}\quad
	\mathscr{E}_{L}=\frac{1}{2\pi i}\int_{\gamma} (z-T_{L})^{-1} \dd z.
\end{eqnarray}
Note that $\mathscr{E}$ can be viewed as the spectral projections of $T$ related to the eigenvalue $E^{-1}$.
In fact, if $\lmax$ is sufficiently large, then the convergence $E_L\rightarrow E$ implies that $(z-T_L)^{-1}$ is well-defined and bounded on $\gamma$, and $\mathscr{E}_{L}$ is the spectral projections of $T_L$ related to $E_L^{-1}$.
We can then observe from the relationship $(z-T)^{-1}-(z-T_L)^{-1}=(z-T)^{-1}(T-T_L)(z-T_{L})^{-1}$ that
\begin{eqnarray}
\label{spectral-pro}
\|\mathscr{E}-\mathscr{E}_{L}\|_{\mathscr{L}(\hdel,\mathcal{R}(\mathscr{E}))} \le C \|T-T_L\|_{\mathscr{L}(\mathcal{R}(\mathscr{E}),\hdel)}
\end{eqnarray}
with $\mathcal{R}(\mathscr{E})$ denoting the range of $\mathscr{E}$.
	
We finally need to introduce an ``interpolation" operator from $\mathcal{R}(\mathscr{E})$ to $\mathcal{U}_{L}(z)$.
For any $u\in \mathcal{R}(\mathscr{E})$, we can write $u$ in the interstitial region $\omeout$ as
\begin{eqnarray}
\label{expand-v-eq}
u\big|_{\omeout}({\bm r}) = \sum_{n=1}^{N}\sum_{\ell m} c_{\ell m}^{n} H_{\ell m} ({\bm r}_{n};E) =: \sum_{n=1}^{N} u^{(n)}(\vr) ,
\end{eqnarray}
where $c_{\ell m}^{n}$ are the coefficients with respect to the basis $H_{\ell m}$.
We then define the interpolation operator $I_{L,z}: \mathcal{R}(\mathscr{E}) \rightarrow \mathcal{U}_{L}(z)$ by
\begin{eqnarray}
\label{interp-operator}
I_{L,z} u({\bm r}) = \sum_{n=1}^{N} \sum_{\ell\le\lmax}  c_{\ell m}^{n} \phi_{\ell m,\lmax}^n ({\bm r};z) ,
\end{eqnarray}
where $\phi_{\ell m,\lmax}^n$ are the basis functions defined in \eqref{trial-basis}.
It is easy to see from \eqref{expand-v-eq} and \eqref{interp-operator} that 
$I_{L,z} u({\bm r}) = \sum_{n=1}^{N} I_{L,z} u^{(n)}(\vr)$ for ${\bm r}\in\omeout$.

The following three lemmas are the three key ingredients of the proof.
Lemma \ref{lem-append-3} gives the error bound for interpolation $I_{L,E}$, which can be viewed as the best approximation error in the finite dimensional space $\mathcal{U}_{L}(E)$, and is crucial for the proof of the other two lemmas. 
Lemma \ref{lem-append-1} gives the error bound for the approximate solution operator $T_L$.
Lemma \ref{lem-append-2} estimates the energy error in terms of the wave functions.
We put the proofs of these lemmas in the next subsection \ref{append-convergence-c}.

\begin{lemma}
\label{lem-append-3}
Let $u\in \mathcal{R}(\mathscr{E})$. Then there exists a constant $C_s>0$ depending only on $s$ and $u$ such that 
\begin{eqnarray}
\label{interp-err}
\|u-I_{L,E}u\|_{H^r(\Omega^{\rm I})} \le C_s L^{r-s} \qquad \forall~s>r\geq 1.
\end{eqnarray}
\end{lemma}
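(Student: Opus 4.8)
The plan is to reduce the estimate to a single-center computation and then exploit the analyticity/decay properties of the spherical Hankel functions. Since $u\in\mathcal{R}(\mathscr{E})$ and $\mathscr{E}$ is the spectral projection of the smoothing solution operator $T$ onto the (finite-dimensional) eigenspace associated with $E^{-1}$, the function $u$ is in fact an eigenfunction combination of \eqref{model-weak-eq}; hence $u$ solves $(-\Delta+V-E)u=0$ in $\Omega^{\rm I}$, where $V=0$, so $u|_{\Omega^{\rm I}}$ is a (decaying) solution of the Helmholtz-type equation $(-\Delta-E)u=0$ with $E<0$. This is exactly why the multicenter expansion \eqref{expand-v-eq}, $u|_{\Omega^{\rm I}}=\sum_{n}\sum_{\ell m}c^n_{\ell m}H_{\ell m}(\bm r_n;E)$, is available and convergent. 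The key analytic input I would establish first is a decay rate on the coefficients: because $u$ is a fixed function (independent of $L$) and the $H_{\ell m}(\cdot;E)$ grow like $(\ell)!/(\sqrt{|E|}\,r)^{\ell}$ pointwise away from each $\bm R_n$, the requirement that the series converges on a neighbourhood of $\Omega^{\rm I}$ (in particular on the rings $\widetilde B_n$) forces $|c^n_{\ell m}|\le C_s\,\rho^{\ell}\,(\ell!)^{-1}$ for some $\rho<1$ depending on the geometry, or more conservatively $|c^n_{\ell m}|\le C_s L^{-s}$ style bounds after summation — this is the statement that an analytic (real-analytic, in fact entire-in-the-shifted-variable) solution has superalgebraically decaying angular-momentum coefficients.

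Concretely I would proceed as follows. \emph{Step 1:} By linearity and the splitting $u=\sum_n u^{(n)}$, $I_{L,E}u=\sum_n I_{L,E}u^{(n)}$, it suffices to bound $\|u^{(n)}-I_{L,E}u^{(n)}\|_{H^r(\Omega^{\rm I})}$ for each fixed $n$ and sum. \emph{Step 2:} On $\Omega^{\rm I}$ the basis functions $\phi^n_{\ell m,L}$ restricted to the interstitial region are exactly $H_{\ell m}(\bm r_n;E)$ (first line of \eqref{trial-basis}), so $u^{(n)}-I_{L,E}u^{(n)}\big|_{\Omega^{\rm I}}=\sum_{\ell>L}c^n_{\ell m}H_{\ell m}(\bm r_n;E)$ — a pure tail of the Hankel expansion. \emph{Step 3:} Estimate $\|H_{\ell m}(\bm r_n;E)\|_{H^r(\Omega^{\rm I})}$ from above; on $\Omega^{\rm I}$ one has $|\bm r_n|\ge R_n$, and using the known bounds for $h_\ell(\sqrt{E}r)$ and its derivatives for $r$ bounded below, together with $|Y_{\ell m}|\lesssim \ell^{1/2}$ and $r$-derivatives introducing at most polynomial-in-$\ell$ factors, one gets $\|H_{\ell m}(\cdot;E)\|_{H^r(\Omega^{\rm I})}\le C\,\ell^{r}\,\Lambda^{\ell}$ for some $\Lambda$ determined by $\max_{\bm r\in\Omega^{\rm I}}|\bm r-\bm R_n|^{-1}$ and $\sqrt{|E|}$. \emph{Step 4:} Combine with the coefficient decay from the preliminary analyticity estimate: the product $|c^n_{\ell m}|\cdot\|H_{\ell m}(\cdot;E)\|_{H^r(\Omega^{\rm I})}$ decays geometrically (or superalgebraically), so the tail over $\ell>L$ is bounded by $C_s L^{r-s}$ for every $s>r$. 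Summing the finitely many $n$ preserves the bound.

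The main obstacle is \emph{Step 3 combined with the coefficient estimate}: one must quantify the competition between the factorial growth of $h_\ell(\sqrt{E}r)$ as $\ell\to\infty$ (for $r$ in the interstitial region, which is bounded away from $\bm R_n$ but not uniformly large) and the factorial decay of the coefficients $c^n_{\ell m}$ coming from the fact that $u$ is smooth/analytic near $\Omega^{\rm I}$ — i.e. one needs the radius of convergence of the multicenter expansion to strictly contain $\Omega^{\rm I}$, which is a consequence of $u$ solving an elliptic equation with analytic (here zero) potential on a neighbourhood. Making the constant $C_s$ depend only on $u$ and $s$ (and the fixed geometry), and not on $L$, hinges on extracting this uniform region of analyticity. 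A clean way to package it is to invoke interior elliptic regularity / analytic hypoellipticity for $(-\Delta-E)u=0$ to get Cauchy-type estimates $\|D^\alpha u\|_{L^\infty(K)}\le C\,|\alpha|!\,\tau^{-|\alpha|}$ on a compact $K\supset\overline{\Omega^{\rm I}}$, translate these into the claimed decay of $c^n_{\ell m}$ via the addition theorem for Hankel functions, and then the rest is the routine summation sketched above. Once that uniform-analyticity fact is in hand, the estimate \eqref{interp-err} with arbitrary algebraic rate $L^{r-s}$ follows immediately, since geometric decay beats any polynomial.
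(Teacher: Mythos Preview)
Your reduction (Steps 1--2) is correct and matches the paper, but from Step 3 onward your route diverges from the paper's and carries a real gap.

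\textbf{What the paper actually does.} The paper does \emph{not} try to estimate $c^n_{\ell m}$ and $\|H_{\ell m}\|_{H^r(\Omega^{\rm I})}$ separately and then balance factorial growth against factorial decay. Instead it proves a \emph{Sobolev norm equivalence on annuli}: for any solution $v$ of $(-\Delta-z)v=0$ on a ring $[\underline d,\overline d]\times\mathbb S^2$ written as $\sum_{\ell m}c_{\ell m}\big(a_\ell j_\ell(ikr)+b_\ell h_\ell(ikr)\big)Y_{\ell m}$, one has
\[
\|v\|^2_{H^s} \;\sim\; \sum_{\ell m}(1+\ell^2)^s|c_{\ell m}|^2\big(\|a_\ell j_\ell\|^2_{L^2}+\|b_\ell h_\ell\|^2_{L^2}\big),
\]
the main technical point being that radial derivatives are controlled by angular ones via the recursion $v'_\ell=\frac{1}{2\ell+1}(\ell v_{\ell-1}-(\ell+1)v_{\ell+1})$ together with monotonicity estimates for $|j_\ell|,|h_\ell|$ in $\ell$. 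This immediately gives the truncation bound $\|v-v_L\|_{H^r}\le C L^{r-s}\|v\|_{H^s}$. Then, on the ring $B_n$, the paper writes $u=\sum_{\ell m}c^n_{\ell m}\big(h_\ell+j_\ell/t^n_\ell(E)\big)Y_{\ell m}$; since $t^n_\ell(E)$ is real, the Bessel and Hankel parts contribute \emph{additively} to the equivalent norm, so $\|u^{(n)}\|_{H^s(B_n)}\le C\|u\|_{H^s(B_n)}$, and exponential radial decay of $h_\ell$ upgrades this to $\|u^{(n)}\|_{H^s(\mathbb R^3\setminus\Omega_n)}\le C\|u\|_{H^s(\Omega^{\rm I})}$. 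Elliptic $H^s$-regularity of the eigenfunction then finishes the proof. No analyticity, no explicit coefficient decay, no factorial arithmetic.

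\textbf{Where your argument breaks.} First, your Step 3 bound $\|H_{\ell m}(\cdot;E)\|_{H^r(\Omega^{\rm I})}\le C\ell^r\Lambda^\ell$ with a constant $\Lambda$ is false: at $r_n=R_n$ the Hankel function satisfies $|h_\ell(ikR_n)|\sim (2\ell-1)!!/(kR_n)^{\ell+1}$, i.e.\ factorial growth in $\ell$, not geometric. Second, and more seriously, your mechanism for obtaining $|c^n_{\ell m}|\lesssim \rho^\ell/\ell!$ from analyticity of $u$ is not spelled out and is genuinely delicate. Cauchy estimates on $u$ give decay of the \emph{total} spherical-harmonic components of $u$ about $\bm R_n$, but on $B_n$ those components are $c^n_{\ell m}h_\ell+a^n_{\ell m}j_\ell$ with the other-center contribution folded into $a^n_{\ell m}$ via the addition theorem; extracting $c^n_{\ell m}$ alone from this, with a decay rate sharp enough to beat the factorial growth of $h_\ell$ down to $r_n=R_n$, requires exactly the kind of norm-separation argument the paper provides. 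Appealing to ``the radius of convergence strictly contains $\Omega^{\rm I}$'' presupposes the conclusion: it is not clear a priori that the individual $u^{(n)}$ extends analytically across $\Gamma_n$, and without that you only get convergence \emph{on} $\Omega^{\rm I}$, which gives $\rho\Lambda\le 1$ rather than $<1$. The paper's norm-equivalence trick is precisely what lets you bypass this radius-of-convergence question entirely.
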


\begin{lemma}
\label{lem-append-1}
If $u\in \mathcal{R}(\mathscr{E})$ and the asymptotic assumption \eqref{assumption} is satisfied, then there exists a constant $C_s>0$ depending only on $s$ and $u$ such that 
\begin{eqnarray}
\label{err}
\|Tu-T_L u\|_{\delta} \le C_s \left( |E-E_L| + L^{1-s} \right) .
\end{eqnarray}
\end{lemma}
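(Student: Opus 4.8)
\textbf{Proof proposal for Lemma \ref{lem-append-1}.}

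The plan is to estimate $\|Tu - T_L u\|_\delta$ by inserting the interpolant $I_{L,E}u \in \mathcal{U}_L(E)$ as an intermediate object and splitting the error into an approximation part and a consistency part. First I would write $Tu - T_L u = (Tu - I_{L,E}u) + (I_{L,E}u - T_L u)$. The first term is controlled directly by Lemma \ref{lem-append-3} with $r=1$, since $u \in \mathcal{R}(\mathscr{E})$ means $Tu = E^{-1}u$ (as $ETu = u$), so up to the constant $E^{-1}$ the difference $Tu - I_{L,E}u$ is exactly the interpolation error $u - I_{L,E}u$ on $\Omega^{\rm I}$, which is $O(L^{1-s})$; on the atomic spheres the two functions agree by construction of $\phi^n_{\ell m, L}$ once we track the low-$\ell$ coefficients, so the only genuine contribution is the interstitial tail. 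The bulk of the work is the second term $w_L := I_{L,E}u - T_L u \in \mathcal{U}_L(E_L)$... but here one must be careful, because $I_{L,E}u$ lives in $\mathcal{U}_L(E)$ while $T_L u$ lives in $\mathcal{U}_L(E_L)$, i.e. the two finite-dimensional spaces carry \emph{different} energy parameters. So the first real step is to pass from $I_{L,E}u$ to $I_{L,E_L}u$ and estimate $\|I_{L,E}u - I_{L,E_L}u\|_\delta \le C|E - E_L|$, using the smooth (indeed analytic) dependence of the Bessel/Hankel functions, the structure constants $\struc(z)$, and the radial solutions $\chi^n_\ell(\cdot;z)$ on the energy parameter $z$ near $z = E$; this contributes the $|E-E_L|$ term in \eqref{err}.

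Next I would exploit the coercivity of $a_{L_L}(\cdot,\cdot)$ from Lemma \ref{lem-equ}(b1): writing $\tilde w_L := I_{L,E_L}u - T_L u \in \mathcal{U}_L(E_L)$, we have $C_1\|\tilde w_L\|_\delta^2 \le a_{L_L}(\tilde w_L, \tilde w_L)$, and it remains to bound the right-hand side by testing the equation. Using the definition $a_{L_L}(T_L u, v) = (u, \mathcal{M}v)$ for $v \in \mathcal{U}_L(E_L)$, I get
\begin{eqnarray*}
a_{L_L}(\tilde w_L, \tilde w_L) = a_{L_L}(I_{L,E_L}u, \tilde w_L) - (u, \mathcal{M}\tilde w_L) = a_\delta(I_{L,E_L}u, \mathcal{M}\tilde w_L) - (u, \mathcal{M}\tilde w_L).
\end{eqnarray*}
Now I would compare with the exact relation $a_\delta(Tu, \mathcal{M}\tilde w_L) = (u, \mathcal{M}\tilde w_L)$, which holds since $\mathcal{M}\tilde w_L \in \mathcal{V}_L(E_L) \subset H^1(\mathbb{R}^3)$ and $Tu$ solves the full problem. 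Subtracting, $a_{L_L}(\tilde w_L,\tilde w_L) = a_\delta(I_{L,E_L}u - Tu, \mathcal{M}\tilde w_L)$. The continuity of $a_\delta$ and the $\mathcal{M}$-bound \eqref{M-eq} then give $a_{L_L}(\tilde w_L,\tilde w_L) \le C\|I_{L,E_L}u - Tu\|_\delta \|\tilde w_L\|_\delta$, whence $\|\tilde w_L\|_\delta \le C\|I_{L,E_L}u - Tu\|_\delta$; the latter is bounded by $\|I_{L,E_L}u - I_{L,E}u\|_\delta + \|I_{L,E}u - Tu\|_\delta \le C(|E-E_L| + L^{1-s})$ by the previous step and Lemma \ref{lem-append-3}. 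Assembling the three pieces — interpolation error, energy-shift of the interpolant, and the Galerkin/coercivity estimate — and using the triangle inequality yields \eqref{err}.

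The main obstacle I anticipate is handling the mismatch of energy parameters cleanly. One subtlety is that $\mathcal{M} = \mathcal{M}_{z,L}$ itself depends on $z$, so when I test with $\mathcal{M}_{E_L,L}\tilde w_L$ I must make sure every object is consistently at energy $E_L$; the exact solution identity $a_\delta(Tu,\cdot) = (u,\cdot)$ is energy-free and causes no trouble, but I should double-check that $Tu$ does not need any angular-momentum-dependent modification before pairing it with a test function in $\mathcal{V}_L(E_L)$ — and it does not, because $\mathcal{V}_L(E_L) \subset H^1(\mathbb{R}^3)$ so the full bilinear form $a = a_\delta$ applies. A second delicate point is verifying that $\|I_{L,E}u - I_{L,E_L}u\|_\delta \le C|E-E_L|$ with a constant independent of $L$: this requires uniform-in-$\ell$ control of the $z$-derivatives of $\chi^n_\ell(\cdot;z)$, $h_\ell(\sqrt{z}R_n)$, and $\struc(z)$ against the decaying Fourier-type coefficients $c^n_{\ell m}$ of $u \in \mathcal{R}(\mathscr{E})$, which decay super-algebraically because $u$ is smooth in the interstitial region; this is the same mechanism underlying Lemma \ref{lem-append-3}, so the estimate should go through, but it is where the bookkeeping is heaviest.
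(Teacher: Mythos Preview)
Your overall strategy---Galerkin orthogonality via the coercivity of $a_L$ to reduce $\|Tu-T_Lu\|_\delta$ to a best-approximation error, then choosing the interpolant $I_{L,E_L}u$ (properly scaled by $E^{-1}$) and splitting the resulting interpolation error into an $L^{1-s}$ piece and an $|E-E_L|$ energy-shift piece---is exactly the paper's approach. The Céa-type step and the energy-shift step are both correctly identified, and your discussion of the $\mathcal{M}$-dependence on $z$ is sound.

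The genuine gap is your treatment of the atomic-sphere contribution to $\|u-I_{L,E}u\|_\delta$. You write that ``on the atomic spheres the two functions agree by construction of $\phi^n_{\ell m,L}$ once we track the low-$\ell$ coefficients,'' but this is false: inside $\Omega_n$ the exact eigenfunction $u$ has \emph{all} angular-momentum components, and even its $\ell\le L$ coefficients need not match those of $I_{L,E}u$ (the latter are assembled from the interstitial Hankel coefficients via the structure constants, cf.\ \eqref{trial-basis}). Lemma~\ref{lem-append-3} only bounds the interpolation error on $\Omega^{\rm I}$, so you cannot invoke it for the $H^1(\Omega_n)$ part. The paper closes this gap by observing that both $u$ and $I_{L,E}u$ satisfy $(-\Delta+V-E)v=0$ in $\Omega_n$, and then uses the asymptotic assumption \eqref{assumption}---which guarantees $E$ is not a Dirichlet eigenvalue on $\Omega_n$---to obtain the elliptic interior estimate $\|u-I_{L,E}u\|_{H^1(\Omega_n)}\le C\|u-I_{L,E}u\|_{H^{1/2}(\Gamma_n)}$; the trace on $\Gamma_n$ is then controlled by the interstitial estimate. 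This is precisely where the hypothesis \eqref{assumption} enters the proof, and your sketch does not account for it. A parallel interior estimate is also needed for the atomic-sphere part of $\|I_{L,E}u-I_{L,E_L}u\|_\delta$, which you likewise do not address.

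A minor point: to make the algebra line up you should insert $E^{-1}I_{L,E_L}u$ rather than $I_{L,E_L}u$ as the intermediate element (as the paper does), so that $\|I_{L,E_L}u-Tu\|_\delta$ becomes $E^{-1}\|u-I_{L,E_L}u\|_\delta$; your current splitting produces $\|I_{L,E}u-E^{-1}u\|_\delta$, which is not the interpolation error.
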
	

\begin{lemma}
\label{lem-append-2}
Let $(E_L,u_L)$ be an eigenpair of \eqref{Galerkin-form}.
Then there exists a constant $C_s>0$ depending only on $s$ and $u$ such that  
\begin{eqnarray}
\label{energy_bound}
|E-E_L| \le C_s \left( L^{1-s} + \|u-u_L\|^2_{\delta}\right) \qquad \forall~ s>1 .
\end{eqnarray}
\end{lemma}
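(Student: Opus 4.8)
The plan is to exploit the variational structure and follow the classical eigenvalue-approximation trick: write $E-E_L$ as a Rayleigh-quotient-type difference and control it by the wave function error plus the consistency error of the scheme. Concretely, start from the normalizations $(u,u)=1$ (taking $u=u_\infty$, $\|u\|_{L^2}=1$) and $(u_L,\mollf u_L)=1$, and from the two variational identities $a(u,v)=E(u,v)$ for all $v\in\hone$ (Theorem \ref{them-convergence}, part \eqref{convergence}) and $a_L(u_L,v)=E_L(u_L,\mollf v)$ for all $v\in\ulz$. The idea is to test the first identity with a suitable approximation of $u_L$ and the second with $u$ (or with $\mathcal M u$, or with $I_{L,E}u$), subtract, and read off $E-E_L$. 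Because the bilinear forms and the $L^2$-pairings differ between the continuous and discrete problems (one has $a$, the other $a_L=a_\delta(\cdot,\mollf\,\cdot)$; one has $(\cdot,\cdot)$, the other $(\cdot,\mollf\,\cdot)$), the difference will contain genuine "variational crime" terms that must be shown to be of size $L^{1-s}$.

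The key steps, in order, would be: \textbf{(1)} Set $e_L := u - u_L$ (interpreted in $\hdel$ after the weak-convergence identification). Using the symmetry of $a_L$ from Lemma \ref{lem-equ}(a) and of $a$, derive an algebraic identity of the form
\begin{eqnarray*}
E - E_L = \big(a(u,u) - a_L(u_L,u_L)\big) - \big(\text{pairing correction terms}\big),
\end{eqnarray*}
and then rewrite $a(u,u)-a_L(u_L,u_L) = a_\delta(u-u_L,\,u-u_L) + 2\,\mathrm{Re}\,a_\delta(u_L, u-u_L) - \big(a_\delta(u_L,u_L)-a_L(u_L,u_L)\big)$, where the last bracket is $a_\delta(u_L, u_L - \mollf u_L)$, a quantity supported only on the rings $B_n$. \textbf{(2)} Bound the quadratic term $a_\delta(e_L,e_L)\le C\|e_L\|_\delta^2 = C\|u-u_L\|_\delta^2$ by continuity of $a_\delta$, which furnishes the $\|u-u_L\|_\delta^2$ contribution in \eqref{energy_bound}. \textbf{(3)} Handle the cross term $\mathrm{Re}\,a_\delta(u_L, u-u_L)$: here test the exact equation \eqref{convergence} against $I_{L,E}u_L$ or symmetrically use that $u_L$ is a discrete eigenfunction, reducing this to a surface/interstitial consistency term. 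By integration by parts — exactly as in \eqref{adelta_j} — this collapses to jump integrals $\sum_n\int_{\Gamma_n}[\partial u/\partial r_n](u_L - v_L)^*$ for an appropriate $v_L\in\ulz$, which by the trace theorem and Lemma \ref{lem-append-3} is $O(L^{1-s})$. \textbf{(4)} The ring terms $a_\delta(u_L, u_L-\mollf u_L)$ and the $L^2$-pairing corrections $(u_L,\mollf u_L)-(u_L,u_L) = (u_L, (\mollf - \mathrm{Id})u_L)$ are, by construction of $\mollf$, supported in $\bigcup_n B_n$ and involve only the high angular-momentum tail $\ell > L$ of $u_L$ (cf. \eqref{test-basis} and the computations in the proof of Lemma \ref{lem-equ}(b)); estimating these by the decay of $|j_\ell(\sqrt{z}r)|$ on $B_n$ and the coefficient bounds inherited from $\|u_L\|_\delta\le C$ gives an $O(L^{1-s})$ bound again. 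Combining (2)–(4) yields \eqref{energy_bound}.

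The main obstacle I expect is step \textbf{(3)}: making the cross term genuinely first order in the jump (rather than only $O(\|e_L\|_\delta)$, which would be useless since we need $\|u-u_L\|_\delta^2$, not $\|u-u_L\|_\delta$, to close the later bootstrap with Lemma \ref{lem-append-1}). This requires carefully choosing the test function in the exact problem so that the mismatch is measured only by the weakly-continuous jump $[\partial u/\partial r_n]$ of the \emph{exact} solution paired against the part of $u_L - v_L$ orthogonal to $\{Y_{\ell m}\}_{\ell\le L}$ — i.e., genuinely using the weak-continuity structure of $\ulz$ from \eqref{mortar-value} twice, once for each factor. A secondary technical point is that $u$ and $u_L$ live a priori in different spaces ($\hone$ versus $\ulz\subset\hdel$), so all pairings must be interpreted in $\hdel$ and the normalization discrepancy $\|u\|_{L^2}=1$ versus $(u_L,\mollf u_L)=1$ must be reconciled, contributing a harmless $O(\|u-u_L\|_{L^2}) = O(\|u-u_L\|_\delta)$ correction that, when multiplied against already-small factors, stays within the claimed bound. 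Once these bookkeeping issues are handled, the estimate follows by collecting terms.
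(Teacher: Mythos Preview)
Your overall strategy matches the paper's: decompose $E-E_L$ into a quadratic wave-function error, a consistency (jump) term of size $O(L^{1-s})$, and mollification corrections $a_\delta(u_L,u_L-\mollf u_L)$, $(u_L,u_L-\mollf u_L)$ supported on the rings $B_n$ and carrying only $\ell>L$ modes. Steps (2) and (4) are correct and are exactly what the paper does for its terms $Q_2$, $Q_3$ and the normalization estimate \eqref{proof-M-error}.

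The gap is in step (3). Two concrete issues. First, your jump formula is wrong: the exact eigenfunction $u\in H^1(\mathbb{R}^3)$ has no jump, so $[\partial u/\partial r_n]_{\Gamma_n}=0$; the boundary term that actually appears after integrating by parts is $\sum_n\int_{\Gamma_n}\frac{\partial u}{\partial r_n}\,[u_L]_{\Gamma_n}^*$ (trace of the smooth $\partial u/\partial r_n$ paired with the jump of $u_L$, which by weak continuity has only $\ell>L$ components), or the symmetric version with $[\partial u_L/\partial r_n]$ paired against the trace of $u$. Either yields $O(L^{1-s})$, but not for the reason you state. Second, and more seriously, your decomposition leaves the cross term in the form $2\,\mathrm{Re}\,a_\delta(u_L,u-u_L)$, and after using the equations this produces an $L^2$ term $E_L\cdot(u_L,u-u_L)$ that is \emph{a priori} only $O(\|u-u_L\|_{L^2})$, not quadratic. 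You flag this as a ``secondary technical point'' but it is in fact the crux: without it the bootstrap with Lemma \ref{lem-append-1} does not close.

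The paper handles this cleanly by a renormalization that you do not use: it sets $\bar u_L:=u_L/\|u_L\|_{L^2}$, so that both $u$ and $\bar u_L$ are $L^2$-unit vectors, and writes the decomposition with the cross term $2a_\delta(u,u-\bar u_L)$ instead of $2a_\delta(u_L,u-u_L)$. Then the exact equation gives $2a_\delta(u,u-\bar u_L)=2E(u,u-\bar u_L)+2D_\delta$, and the Pythagorean identity for unit vectors yields $2(u,u-\bar u_L)=\|u-\bar u_L\|_{L^2}^2$ exactly --- immediately quadratic. The consistency error $D_\delta=-a_\delta(u,\bar u_L)+E(u,\bar u_L)$ is then bounded, after integrating by parts on the smooth $u$, by $C\|u\|_{H^1}\sum_n\|[u_L]_{\Gamma_n}\|_{H^{1/2}(\Gamma_n)}\le C_s L^{1-s}$. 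The passage between $\bar u_L$ and $u_L$ costs only $\|u_L-\bar u_L\|_\delta\le C\|u_L-\mollf u_L\|_{L^2}\le C_sL^{-s}$, which is absorbed. Your route can be repaired along the same lines (showing $2\,\mathrm{Re}(u-u_L,u)=\|u-u_L\|_{L^2}^2+O(L^{-s})$ via $(u_L,\mollf u_L)=1$), but the paper's renormalization to $\bar u_L$ is the tidier device.
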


By using these lemmas, we can immediately complete the proof of Theorem \ref{them-convergence}.

\begin{proof}[Proof of \eqref{convergent-rate-eq}]
Combining \eqref{spectral-pro}, \eqref{err} and \eqref{energy_bound}, we can obtain that
\begin{eqnarray}\nonumber
\|u-u_L\|_{\delta} \le C_s \left( \|u-u_L\|^2_{\delta} + L^{1-s} \right) \le C_s L^{1-s},
\end{eqnarray}
which completes the proof.
\end{proof}

\subsubsection{Proofs of the lemmas}
\label{append-convergence-c}

\begin{proof}[Proof of Lemma \ref{lem-append-3}]
The proof will be divided into two parts.
First, we will show the error on a ring by transfering the Sobolev norms into equivalent forms in the spherical coordinates.
Then we will extract the interpolation error in the interstitial region.

\newcommand{\ud}{\underline{d}}
\newcommand{\od}{\overline{d}}

\emph{Part \uppercase\expandafter{\romannumeral1}.}
Let $z<0$, $k=\sqrt{|z|}$, and $\bar{\Omega}=[\ud,\od]\times \mathbb{S}^2$ with $\od>\ud>0$. 
If $v$ satisfies the equation $-\Delta v = zv$ on $\bar{\Omega}$, then $v$ can be written in the following form
\begin{eqnarray}\nonumber
\label{u-form}
v(\bm{r};z) = \sum_{\ell m} c_{\ell m} v_{\ell}(ikr) Y_{\ell m}(\hat{\bm{r}}) 
\qquad{\rm with}\qquad 
v_{\ell}(r) = a_{\ell} j_{\ell}(r) + b_{\ell} h_{\ell}(r),
\end{eqnarray}
where the coefficients $a_{\ell}$ and $b_{\ell}$ are real numbers.
For a given $L\in\N$, let $v_L$ be the truncation of $v$ given by
\begin{eqnarray*}
v_L(\bm{r};z) = \sum_{\ell\le L} c_{\ell m} v_{\ell}(ikr) Y_{\ell m}(\hat{\bm{r}}).
\end{eqnarray*}
We will show in this first part that there exists a constant $C$ independent of $L$, such that
\begin{eqnarray}
\label{truncation-err-eq}
\|v-v_L\|_{H^r(\bar{\Omega})} \le C L^{r-s} \|v\|_{H^s(\bar{\Omega})} \quad \forall~r<s.
\end{eqnarray}

By using the fact that $\Delta=\frac{1}{r^2} \frac{\partial}{\partial r}(r^2 \frac{\partial}{\partial r}) + \frac{1}{r^2} \Delta_{S^2}$, we can transfer the Sobolev norm into the spherical coordinates as follows
\begin{align}\nonumber
\|v\|^2_{H^1(\bar{\Omega})} &= \|v\|^2_{L^2(\bar{\Omega})} + \int_{\bar{\Omega}} v(-\Delta v) + \int_{\partial\bar{\Omega}} v\frac{\partial v}{\partial n}
\\[1ex] \nonumber
&= \sum_{\ell m} |c_{\ell m}|^2 \int_{\ud}^{\od} r^2 \left(|v_\ell|^2+|z v'_\ell|^2 \right) \dd r + \sum_{\ell m} \ell(\ell+1) |c_{\ell m}|^2 \int_{\ud}^{\od} |v_{\ell}|^2 \dd r 
\\[1ex] \label{h1-norm}
&\sim \sum_{\ell m} (\ell^2+1) |c_{\ell m}|^2 \|v_{\ell}(ik\cdot)\|^2_{L^2([\ud,\od])} + \sum_{\ell m} |c_{\ell m}|^2 \| v'_{\ell}(ik\cdot)\|^2_{L^2([\ud,\od])},
\end{align}
where the notation $A \sim B$ means that there exist two positive constants $C_1$ and $C_2$ independent on $v$ such that $C_1 B\le A\le C_2 B$.
We then claim that in \eqref{h1-norm}, the second term can be controlled by the first term, more precisely, there exists $C>0$ independent of $\ell$ such that
\begin{eqnarray}
\label{radial-deriv-control-eq}
\|v'_{\ell}(ik\cdot)\|^2_{L^2([\ud,\od])} 
\le C (\ell^2+1) \|v_{\ell}(ik\cdot)\|^2_{L^2([\ud,\od])} \qquad \forall~ \ell>0.
\end{eqnarray}

To see \eqref{radial-deriv-control-eq}, we first have the following recursive formula from the properties of the functions $j_{\ell}$ and $h_{\ell}$ \cite{gonis00}
\begin{eqnarray}
\label{recursion}
v'_{\ell} = \frac{1}{2\ell+1} \left(\ell v_{\ell-1} - (\ell+1) v_{\ell+1} \right) .
\end{eqnarray}
Then we can discuss three cases respectively: (1) $v_{\ell}=h_{\ell}$; (2) $v_{\ell}=j_{\ell}$; and (3) $v_{\ell}=a_{\ell} j_{\ell} + b_{\ell} h_{\ell}$.
	
For the first case $v_{\ell}=h_{\ell}$, we have from the analytical expression of the spherical Hankel function
\begin{eqnarray}\label{analytic-hankel}
h_{\ell}(ikr) = (-i)^{\ell+2} \frac{1}{kre^{kr}} \sum_{n=0}^{\ell} \frac{1}{n! (2kr)^n} \frac{(\ell+n)!}{(\ell-n)!},
\end{eqnarray}
that ${\rm Re}\{h_{\ell}(ikr)\} \cdot {\rm Im}\{h_{\ell}(ikr)\} =0 $ for any $\ell$.
This implies
\begin{eqnarray*}\label{real-vanish}
{\rm Re} \{ h^*_{\ell+1}(ikr) h_{\ell-1}(ikr) \} = {\rm Re} \{ h_{\ell+1}(ikr) h^*_{\ell-1}(ikr) \} = 0,
\end{eqnarray*}
and hence
\begin{eqnarray}
\label{orth-hankel}
|h'_{\ell}(ikr)|^2 = \frac{\ell^2}{(2\ell+1)^2}|h_{\ell-1}(ikr)|^2 + \frac{(\ell+1)^2}{(2\ell+1)^2} |h_{\ell+1}(ikr)|^2.
\end{eqnarray}
We can derive by a direct calculation that there exist two positive constants $C_1$ and $C_2$ independent of $\ell$ such that
\begin{eqnarray}
\label{equ-hankel-eq}
C_1 |h_{\ell}(ikr)| \le |h_{\ell+1}(ikr)| 
\le C_2 \ell |h_{\ell}(ikr)| \qquad \forall~ \ell> 0, ~\ud<r<\od.
\end{eqnarray}
Combining \eqref{recursion}, \eqref{orth-hankel} and \eqref{equ-hankel-eq}, we obtain \eqref{radial-deriv-control-eq}.
	
For the second case $v_{\ell}=j_{\ell}$, we can show by similar arguments and the analytical expression of the spherical Bessel function \eqref{analytic-bessel} that
\begin{eqnarray}
\label{equ-bessel-eq}
C_1 |j_{\ell+1}(ikr)| \le |j_{\ell}(ikr)|
\le C_2 \ell |j_{\ell+1}(ikr)| \qquad \forall~ \ell> 0, ~\ud<r<\od .
\end{eqnarray}
By using \eqref{recursion} and \eqref{equ-bessel-eq}, we see that \eqref{radial-deriv-control-eq} also holds for the second case.
	
For the last case $v_{\ell}=a_{\ell} j_{\ell} + b_{\ell} h_{\ell}$,
we note that for any $\ell$, $j_{\ell}(ikr)$ and $h_{\ell}(ikr)$ are both real numbers or purely imaginary numbers at the same time.
This indicates
\begin{eqnarray*}
|v_{\ell}(ikr)|^2 = |a_{\ell}|^2 |j_{\ell}(ikr)|^2 + |b_{\ell}|^2 |h_{\ell}(ikr)|^2,
\end{eqnarray*}
and 
\begin{align*}
|v'_{\ell}(ikr)|^2 =
& |a_{\ell}|^2 
\left( \frac{\ell^2}{(2\ell+1)^2}|j_{\ell-1}(ikr)|^2 + \frac{(\ell+1)^2}{(2\ell+1)^2} |j_{\ell+1}(ikr)|^2 \right)  \\[1ex]
& + |b_{\ell}|^2  \left( \frac{\ell^2}{(2\ell+1)^2}|h_{\ell-1}(ikr)|^2 + \frac{(\ell+1)^2}{(2\ell+1)^2} |h_{\ell+1}(ikr)|^2 \right).
\end{align*}
We can then immediately obtain \eqref{radial-deriv-control-eq} by combining the first two cases.
	
By taking into accounts \eqref{h1-norm} and \eqref{radial-deriv-control-eq}, we have
\begin{eqnarray}\nonumber
\label{norm-equi-h1}
\|v\|^2_{H^1(\bar{\Omega})} \sim \sum_{\ell m} (\ell^2+1) |c_{\ell m}|^2 \left( \|a_{\ell} j_{\ell}(ik\cdot)\|^2_{L^2([\ud,\od])} + \|b_{\ell} h_{\ell}(ik\cdot)\|^2_{L^2([\ud,\od])}  \right) ,
\end{eqnarray}
which gives rise to the estimate \eqref{truncation-err-eq} with $r=1$.
We can obtain by similar arguments that
\begin{eqnarray}
\label{norm-equi-hs}
\|v\|^2_{H^s(\bar{\Omega})} \sim \sum_{\ell m} (\ell^2+1)^s |c_{\ell m}|^2 \left( \|a_{\ell} j_{\ell}(ik\cdot)\|^2_{L^2([\ud,\od])} + \|b_{\ell} h_{\ell}(ik\cdot)\|^2_{L^2([\ud,\od])}  \right) ,
\end{eqnarray}
and the estimate \eqref{truncation-err-eq} for general $r<s$ cases.
	
\vskip 0.2cm

\emph{Part \uppercase\expandafter{\romannumeral2}.}
In this second part, we will prove the estimate \eqref{interp-err} in the interstitial region.
For any $u\in R(\mathscr{E})$, it takes the form \eqref{expand-v-eq} in the interstitial region $\Omega^{\rm I}$. 
Using the truncation error \eqref{truncation-err-eq}, we have that for any $s>r$,
\begin{align}
\label{interpolation-err-eq}
\|u-I_{L,E}u\|_{H^r(\Omega^{\rm I})} 
\le \sum_{n=1}^N \|u^{(n)}-I_{L,E}u^{(n)}\|_{H^r(\mathbb{R}^3/\Omega_{n})} 
\le C L^{1-s} \sum_{n=1}^N \|u^{(n)}\|_{H^s(\mathbb{R}^3/\Omega_{n})} ,
\end{align}
where $u^{(n)}$ and $I_{L,E}$ were defined in \eqref{expand-v-eq} and \eqref{interp-operator}.
	
Note that $u$ can also be written as (see Section \ref{sec:mst})
\begin{eqnarray}
\label{v-bn-represent-eq}
u({\bm r}) = \sum_{\ell m} c_{\ell m}^{n} \left( h_{\ell}(ikr) + j_{\ell}(ikr)/t^n_{\ell}(E) \right) Y_{\ell m}(\hat{\bm{r}}_n) \qquad \vr\in B_n
\end{eqnarray}
with the $t$-matrix $t^n_{\ell}$ given by \eqref{log-deriv} or \eqref{t_l}.
%
%
Note that \eqref{analytic-bessel} and \eqref{analytic-hankel} imply that $j_{\ell}(ikr), h_{\ell}(ikr), \sqrt{z}j'_{\ell}(ikr)$ and $\sqrt{z}h'_{\ell}(ikr)$ are all real numbers or purely imaginary numbers at the same time for any $\ell$.
Therefore, we can see from the definition \eqref{log-deriv} that $t^n_{\ell}(E)$ is real for any $\ell\ge 0$.
	
Using \eqref{v-bn-represent-eq} and the exponential decay of the spherical Hankel function, which can be seen from the analytical form \eqref{analytic-hankel}, 
we have 
\begin{align}\nonumber
& \hskip -0.3cm
\|u^{(n)}\|^2_{H^s(\mathbb{R}^3/\Omega_{n})} 
= \sum_{\ell m} (1+\ell^2)^s |c^n_{\ell m}|^2 \|h_{\ell}(ik\cdot)\|^2_{L^2([R_n,+\infty))} \\[1ex]\nonumber
&\le C\sum_{\ell m} (1+\ell^2)^s |c^n_{\ell m}|^2 \|h_{\ell}(ik\cdot)\|^2_{L^2([R_n,R_n+r^{\rm b}_n])} \\[1ex]\nonumber
&\le C \sum_{\ell m} (1+\ell^2)^s |c^n_{\ell m}|^2 \left(\|h_{\ell}(ik\cdot)\|^2_{L^2([R_n,R_n+r^{\rm b}_n])} + \|j_{\ell}(ik\cdot)/t^n_{\ell}(E)\|^2_{L^2([R_n,R_n+r^{\rm b}_n])} \right) 
\\[1ex]
\label{vn-regularity-eq}
&= C \|u\|^2_{H^s(B_n)} \le C  \|u\|^2_{H^s(\Omega^{\rm I})} ,
\end{align}
where the relation \eqref{norm-equi-hs} is used for the last equality.
Combining \eqref{interpolation-err-eq}, \eqref{vn-regularity-eq} and the regularity of $R(\mathscr{E})$, we obtain \eqref{interp-err}, which completes the proof.
\end{proof}

\begin{proof}[Proof of Lemma \ref{lem-append-1}]
Let $u\in R(\mathscr{E})$, $w=Tu$ and $w_L=T_L u$.
Then the goal is to estimate $\|w-w_L\|_{\delta}$.
Using Lemma \ref{lem-equ} and the fact that $\mathcal{V}_{L}(E_L)\subset \hone$, we have
\begin{multline*}
\qquad
\|w_L-v_L\|^2_{\delta} \le C a_{L}(w_L-v_L, w_L-v_L) = C a_{L}(w-v_L, w_L-v_L) 
\\[1ex]
\le C \|w-v_L\|_{\delta} \|\mollf(w_L-v_L)\|_{\delta} \le C \|w-v_L\|_{\delta} \|w_L-v_L\|_{\delta}
\qquad\forall~v_L\in \mathcal{U}_{L}(E_L) ,
\qquad
\end{multline*}
which leads to 
\begin{eqnarray}
\label{tri-ineqn-1}
\|w_L-v_L\|_{\delta} \le C \|w-v_L\|_{\delta} \qquad \forall~ v_L\in \mathcal{U}_{L}(E_L).
\end{eqnarray}
Using \eqref{tri-ineqn-1} and the triangle inequality
$\|w-w_L\|_{\delta} \le \|w-v_L\|_{\delta} + \|w_L-v_L\|_{\delta}$,
we have
\begin{eqnarray}
\label{quasi-opt-err}
\|w-w_L\|_{\delta} 
\le C \inf_{v_L\in\mathcal{U}_{L}(E_L)} \|w-v_L\|_{\delta}
\le C \|w-E^{-1} I_{L,E_L}u\|_{\delta} \le C \|u-I_{L,E_L}u\|_{\delta}.
\end{eqnarray}
Then we can write the right-hand side of \eqref{quasi-opt-err} as
\begin{align}
\nonumber
\|u-I_{L,E_L}u\|_{\delta} &= \|u-I_{L,E_L}u\|_{\hout} + \sum_{n=1}^{N} \|u-I_{L,E}u\|_{\hn} + \sum_{n=1}^{N} \|I_{L,E_L}u-I_{L,E}u\|_{\hn}  \\[1ex]\label{err-estimate}
&=: P_1 + P_2 + P_3 ,
\end{align}
and estimate the three terms respectively in the following.

For $P_1$, we have from \eqref{recursion}, \eqref{equ-hankel-eq} and the decay property of spherical Hankel function that
\begin{align}
\label{taylor-estimate}
\nonumber
& \hskip -0.2cm
\|I_{L,E_L}u-I_{L,E}u\|_{\hout} \le \displaystyle C |E-E_{L}| \sum_{n=1}^{N} \left\| \sum_{\ell\le\lmax}  c_{\ell m}^{n} 
|\vr_n| h'_{\ell}(\sqrt{E} |\vr_{n}|) Y_{\ell m}(\hat{\bm r}_n) \right\|_{\hout} 
\\[1ex]\nonumber
&\le C |E-E_{L}| \sum_{n=1}^{N} \sum_{\ell\le L} (1+\ell^2) |c^n_{\ell m}|^2 \|r h'_{\ell}(ikr)\|^2_{L^2([R_n,+\infty))} 
\\[1ex]\nonumber
&\le C |E-E_L| \sum_{n=1}^{N} \sum_{\ell\le L} (1+\ell^2)^2 |c^n_{\ell m}|^2 \| h_{\ell}(ikr)\|^2_{L^2([R_n,+\infty))} \\[1ex]
&\le \displaystyle C |E-E_{L}| \sum_{n=1}^{N} \|I_{L,E} u^{(n)}\|_{H^2(\omeout)},
\end{align}
where $u^{(n)}$ is given in \eqref{expand-v-eq} and the high-order terms have been omitted.
Combining the above estimate with \eqref{interp-err}, \eqref{truncation-err-eq}, \eqref{vn-regularity-eq} and the regularity of $R(\mathscr{E})$, we have
\begin{align}
\label{p1}
\nonumber
P_1 & \le \|I_{L,E_L}u-I_{L,E}u\|_{\hout} + \|u-I_{L,E}u\|_{\hout} \\[1ex]\nonumber
&\le C |E-E_{L}|  \sum_{n=1}^{N} \|I_{L,E} u^{(n)}\|_{H^2(\omeout)} + \|u-I_{L,E}u\|_{\hout} 
\\[1ex]\nonumber
&\le C |E-E_{L}|  \sum_{n=1}^{N} \left( \|u^{(n)} - I_{L,E} u^{(n)}\|_{H^2(\omeout)} + \|u^{(n)}\|_{H^2(\omeout)} \right) + \|u-I_{L,E}u\|_{\hout} 
\\[1ex]
&\le C |E-E_L| + C_s L^{1-s}  \qquad \forall~s>1.
\end{align}
	
We then estimate $P_2$ for each atomic site $1\le n\le N$.
Note that the assumption on the asymptotic problem \eqref{assumption} implies that $E$ is not an eigenvalue of the problem
\begin{eqnarray*}\label{asym-no-eigen-eq}
\left\{
\begin{array}{rl}
(-\Delta+V) u = E u &\qquad {\rm in}~ \omen, \\[1ex]
u = 0 & \qquad {\rm on}~  \gamn ,
\end{array}
\right.
\end{eqnarray*}
which together with the fact $(-\Delta + V - E) (u-I_{L,E}u) = 0$ in $\omen$ implies
\begin{eqnarray}
\label{proof-C-1}
\|u-I_{L,E}u\|_{\hn} \le C \|u-I_{L,E}u\|_{H^{\frac{1}{2}}(\gamn)} .
\end{eqnarray}
Then by using \eqref{interp-err} and \eqref{proof-C-1}, we have
\begin{align}
\label{I-3}
\nonumber
\|u-I_{L,E}u\|_{\hn} & \le C \bigg( \big\|[I_{L,E}u]_{\gamn}\big\|_{H^{\frac{1}{2}}(\gamn)} +  \|u-(I_{L,E}u)^-\|_{H^{\frac{1}{2}}(\gamn)} \bigg) 
\\[1ex]\nonumber
& \le C \bigg( L^{1-s} \big\|I_{L,E}u\big\|_{H^{s}(\Omega^{\rm I})} +  \|u-I_{L,E}u\|_{\hout} \bigg) 
\\[1ex]
& \le C L^{1-s} \bigg( \sum_{n=1}^N \big\|I_{L,E}u^{(n)}\big\|_{H^{s}(\Omega^{\rm I})} +  \|u\|_{H^s(\omeout)} \bigg) . 
\qquad
\end{align}
Therefore, it follows from \eqref{vn-regularity-eq} and the regularity of $R(\mathscr{E})$ that there exists a $C_s>0$ depending only on $u$ and $s$ such that
\begin{eqnarray}
\label{p2}
P_2 \le C_s L^{1-s} \qquad \forall~s>1.
\end{eqnarray}
	
To estimate the last term $P_3$, we have that in each atomic sphere $\omen$,
\begin{eqnarray*}
(-\Delta + V - E_L) (I_{L,E}u-I_{L,E_L}u)= (E-E_L) I_{L,E}u ,
\end{eqnarray*}
which together with \eqref{taylor-estimate}, \eqref{I-3} and the weak continuity of $I_{L,z}v$ (through $\Gamma_n$)
implies that
\begin{align}
\nonumber
& \hskip 0.2cm
\|I_{L,E}u-I_{L,E_L}u\|_{\hn} 
\le C \left( |E-E_L| \|I_{L,E}u\|_{L^2(\omen)} + \|(I_{L,E}u)^+-(I_{L,E_L}u)^+\|_{H^{\frac{1}{2}}(\gamn)} \right) 
\\[1ex] \nonumber
& \le C \left( |E-E_L| \|I_{L,E}u\|_{L^2(\omen)} + \|(I_{L,E}u)^--(I_{L,E_L}u)^-\|_{H^{\frac{1}{2}}(\gamn)} \right) 
\\[1ex] \nonumber
& \le C |E-E_L| \left( \|u\|_{L^2(\omen)} +  \|u-I_{L,E}u\|_{L^2(\omen)} \right)  + C \|I_{L,E}u-I_{L,E_L}u\|_{H^{1}(\Omega^{\rm I})}  
\\[1ex]\nonumber 
& \le C |E-E_L| \left( \|u\|_{L^2(\omen)} +  \big\|[I_{L,E}u]_{\Gamma_n} \big\|_{H^{\frac{1}{2}}(\gamn)} + \|u-I_{L,E}u\|_{H^1(\omeout)} + \sum_{n=1}^{N} \|I_{L,E} u^{(n)}\|_{H^2(\omeout)} \right).
\end{align}
It follows from \eqref{interp-err}, \eqref{vn-regularity-eq} and the regularity of $R(\mathscr{E})$ that there exists $C_s>0$ depending only on $u$ and $s$ such that
\begin{eqnarray}
\label{p3}
P_3 \le C_s |E-E_L|.
\end{eqnarray}

Taking into accounts \eqref{quasi-opt-err}, \eqref{err-estimate}, \eqref{p1}, \eqref{p2} and \eqref{p3}, we complete the proof of \eqref{err}.
\end{proof}

\begin{proof}[Proof of Lemma \ref{lem-append-2}]
Let $\bar{u}_L= u_L / \|u_L\|_{\ltwo}$.
Since $(E, u)$ and $(E_L, u_L)$ satisfy \eqref{model-weak-eq} and \eqref{Galerkin-form}, respectively, we have
\begin{align}
\label{proof-C-Q}
\nonumber
& \hskip -0.2cm
E-E_L = a_{\delta}(u,u) - a_{\delta}(u_L,\mollf u_L)
\\[1ex]\nonumber
& = \big( a_{\delta}(u,u) - a_{\delta}(\bar{u}_L,\bar{u}_L) \big) 
+ \big( a_{\delta}(\bar{u}_L,\bar{u}_L)-a_{\delta}(u_L,u_L) \big) 
+ \big( a_{\delta}(u_L,u_L) - a_{\delta}(u_L,\mollf u_L) \big) 
\\[1ex]
& =: Q_1 + Q_2 + Q_3 .
\end{align}

For the first term $Q_1$, we have from \eqref{model-weak-eq} and \eqref{Galerkin-form} that
\begin{align}
\label{q1}
\nonumber
Q_1 &= a_{\delta}(u,u)-a_{\delta}(\ujbar,\ujbar) = -a_{\delta}(u-\ujbar,u-\ujbar)+2a_{\delta}(u,u-\ujbar)
\\[1ex]\nonumber
&= -a_{\delta}(u-\ujbar,u-\ujbar) + 2E(u,u-\ujbar) + 2D_{\delta}
\\[1ex]
&= -a_{\delta}(u-\ujbar,u-\ujbar) + E (u-\ujbar,u-\ujbar) + 2D_{\delta} ,
\end{align}
where $D_{\delta}$ is the consistency error, and can be estimated by Lemma \ref{lem-equ} (b2) and the weak continuity of $u_L$ as
\begin{align}
\label{consistency-err}
\nonumber
D_{\delta} & = a_{\delta}(u,u-\ujbar) - E(u,u-\ujbar) = -a_{\delta}(u,\ujbar) + E(u,\ujbar) 
\\[1ex]
& \le C \|u\|_{\hone} \sum_{n=1}^{N} \|[u_L]_{\Gamma_n}\|_{H^{\frac{1}{2}}(\gamn)} 
\le C L^{1-s} \|u\|_{\hone}    \sum_{n=1}^{N} \|u_L\|_{H^{s}(B_n)} .
\end{align}
Note that $u_L$ solves the strong form of the eigenvalue problem in the interstitial region, we can deduce that $\sum_{n=1}^{N}\|u_L\|_{H^s(B_n)}$ is uniformly bounded for any $L$.
Then by using \eqref{eigenfunction-bound}, \eqref{truncation-err-eq}, $(u_L,\mollf u_L)=1$ and the fact that $\mollf$ only mollifies the high-frequency components (i.e., $\ell>L$) of the function in $B_n~(n=1,\cdots,N)$, we have
\begin{multline}
\label{proof-M-error}
\|u_L-\ujbar\|_{\delta} = \left| \frac{1}{\|u_L\|_{\ltwo}} -1 \right| \|u_L\|_{\delta}
=  \frac{\left|(u_L,u_L)^{1/2}-(u_L,\mollf u_L)^{1/2}\right|}{\|u_L\|_{\ltwo}} \|u_L\|_{\delta} \\[1ex]
=  \frac{\left|(u_L, u_L-\mollf u_L)\right|}{\|u_L\|_{\ltwo}(\|u_L\|_{\ltwo}+1)}  \|u_L\|_{\delta} 
\le C 
\|u_L - \mollf u_L\|_{\ltwo}
\le C_s \lmax^{-s} \quad \forall~s>1.
\end{multline}
By combining \eqref{q1}, \eqref{consistency-err} and \eqref{proof-M-error}, we obtain
\begin{eqnarray}
\label{proof-C-Q1}
Q_1 \le C_s L^{-s} + C \|u-\ujbar\|^2_{\delta}
\le C_s L^{-s} + C \|u-u_L\|^2_{\delta} \qquad \forall~s>1.
\end{eqnarray}

For $Q_2$, by using the same technique in \eqref{proof-M-error} we have that
\begin{eqnarray}\label{proof-C-Q2}
Q_2 \le	C_s L^{-s} \qquad \forall~s>1.
\end{eqnarray}
For the final term $Q_3$, we can estimate by similar arguments that
\begin{eqnarray}
\label{proof-C-Q3}
Q_3 \le C \|u_L\|_{\delta} \|u_L- \mollf u_L\|_{\delta} 
\le C \|u_L- \mollf u_L\|_{\delta} \le C_s L^{1-s} \qquad \forall~s>1.
\end{eqnarray}

By taking into accounts the estimates \eqref{proof-C-1}, \eqref{proof-C-Q1}, \eqref{proof-C-Q2} and \eqref{proof-C-Q3}, we complete the proof of \eqref{energy_bound}.
\end{proof}


\section{Lippmann-Schwinger equation}
\label{append-free-electron-Green}
\renewcommand{\theequation}{B.\arabic{equation}}
\setcounter{equation}{0}  

In this appendix, we will revisit the Lippmann-Schwinger equation \cite{gonis00} and show an alternative (physical) definition of the $t$-matrix.

Let $G_{0}$ be the Green's function of the free-electron system with energy $z$, which is given by the solution of the following problem 
\begin{eqnarray}
\label{green-def}
\left\{
\displaystyle
\begin{array}{ll}
(z+\Delta) G_{0}(\bm{r},\bm{r}';z)= \delta(\bm{r}-\bm{r}') &\qquad \bm{r},\bm{r}' \in\mathbb{R}^3, \\[1ex]
G_{0}(\bm{r},{\bm r}';z) \rightarrow 0 &\qquad |\bm{r}-\bm{r}'|\rightarrow \infty .
\end{array}\right.
\end{eqnarray}
Then one observes that the solution $\psi$ of \eqref{single-schrodinger} can be written as
\begin{eqnarray}
\label{lipp-sch}
\psi({\bm r}) = \psi_0(\bm{r}) + \int_{\mathbb{R}^3} G_{0}(\bm{r},\bm{r}';z) V_{0}(r') \psi({\bm r}') \dd {\bm r}' \qquad \vr\in\mathbb{R}^3,
\end{eqnarray}
where $\psi_0$ satisfies $-\Delta \psi_0 = z\psi_0$.
This is the well-known Lippmann-Schwinger equation in the quantum scattering theory.

To decompose \eqref{lipp-sch} into each angular-momentum component, we will first need an angular-momentum representation of the free electron Green's function 
\begin{eqnarray}
\label{free-green-angular-momentum}
G_{0}(\bm{r},\bm{r'};z)=\sumlm \ylm g_{\ell}(r,r';z) Y_{\ell m}(\hat{\bm{r}}'),
\end{eqnarray}
where $g_{\ell}$ is given by
\begin{eqnarray}
\label{g_l-expansion}
g_{\ell}(r,r';z)=-i\sqrt{z}j_{\ell}(\sqrt{z}r_<)h_{\ell}(\sqrt{z}r_>)
\end{eqnarray}
with $r_>={\rm max}(r,r')$ and $r_<={\rm min}(r,r')$.
Although the expression \eqref{free-green-angular-momentum} has been well known and widely used in physics, we do not find an appropriate literature for a complete proof.
Therefore, we will provide a proof of \eqref{free-green-angular-momentum} in the following.
By using the angular momentum representation of $\psi$ in \eqref{radial-representation}, the symmetry of $V$, and \eqref{free-green-angular-momentum}, we can rewrite \eqref{lipp-sch} by
\begin{eqnarray}
\label{radial-eq}
\chi_{\ell}(r;z)=j_{\ell}(\sqrt{z}r)+\int_{0}^{R_0} g_{\ell}(r,r';z) V_0(r') \chi_{\ell}(r';z) r'^2 \dd r' \qquad \forall~\ell \in\N .
\end{eqnarray}
Combining \eqref{g_l-expansion} and \eqref{radial-eq} can lead to 
\begin{eqnarray}
\label{radial-general-LS}
\chi_{\ell}(r;z) = j_{\ell}(\sqrt{z}r) + t_{\ell}(z) h_{\ell}(\sqrt{z}r) \qquad r\geq R_0 .
\end{eqnarray}
with $t_l$ given by \eqref{t_l}.
Note that \eqref{chi_l} and \eqref{radial-general-LS} only differ by a normalization constant.

\begin{proof}[Proof of \eqref{free-green-angular-momentum}]

Using \eqref{green-def}, the identity
\begin{align*}
\delta(\bm{r}-\bm{r}')
= \frac{1}{r^2}\delta(r-r')\delta(\bm{\hat{r}}-\hat{\bm{r}}')
= \frac{2}{\pi} \sum_{\ell m} Y_{\ell m}(\bm{\hat{r}}) Y_{\ell m}(\hat{\bm{r}}') 
\int_{0}^{+\infty} \lambda^2 j_{\ell}(\lambda r)  j_{\ell}(\lambda r') \dd\lambda ,
\end{align*}
and the fact that the spherical Bessel functions $j_{\ell}$ solve
\begin{eqnarray}
(\Delta+z) j_{\ell}(\lambda r) \ylm = (z-\lambda^2) j_{\ell}(\lambda r) \ylm 
\qquad{\rm for}~z\in\mathbb{C},~z\notin\{0\}\cup\mathbb{R}^+ ,
\end{eqnarray}
we have
\begin{eqnarray}
\label{proof-a}
G_{0}(\bm{r},\bm{r}';z)
= \frac{2}{\pi}\sum_{\ell m} Y_{\ell m}(\bm{\hat{r}}) Y_{\ell m}(\hat{\bm{r}}') \int_{0}^{+\infty} \frac{\lambda^2}{z-\lambda^2} j_{\ell}(\lambda r) j_{\ell}(\lambda r') \dd\lambda .
\end{eqnarray}
From the odevity of spherical Bessel and Newmann functions, i.e., $j_{\ell}(-x)=(-1)^{\ell} j_{\ell}(x)$ and $n_{\ell}(-x)=(-1)^{\ell+1} n_{\ell}(x)$, we have
\begin{multline}
\label{appen-c-gl}
\frac{2}{\pi} \int_{0}^{+\infty} \frac{\lambda^2}{z-\lambda^2} j_{\ell}(\lambda r) j_{\ell}(\lambda r') \dd\lambda 
= -\frac{1}{\pi} \int_{-\infty}^{+\infty} \frac{\lambda^2}{(\lambda-\sqrt{z})(\lambda+\sqrt{z})} j_{\ell}(\lambda r) j_{\ell}(\lambda r') \dd\lambda \\[1ex]
= -\frac{1}{\pi} \int_{-\infty}^{+\infty} \frac{\lambda^2}{(\lambda-\sqrt{z})(\lambda+\sqrt{z})} j_{\ell}(\lambda r_<) h_{\ell}(\lambda r_>) \dd\lambda
=: \int_{-\infty}^{+\infty} f(\lambda) \dd\lambda .
\qquad
\end{multline}

In order to evaluate \eqref{appen-c-gl}, we extend the integrand $f(\lambda)$ to the complex plane, and introduce an infinite semicircle $S_{\infty}$ in the upper plane, with 
$S_{\infty} := \lim\limits_{|x|\rightarrow +\infty} S_{|x|}$ and $S_{|x|}=\{x=|x|e^{i\theta},~0\le\theta\le\pi\}$,
such that the real axis and $S_{\infty}$ form a closed contour.
Since $f(\lambda)$ has only one singular point $\lambda=\sqrt{z}$ inside the contour, the residual theorem yields
\begin{align}
\label{proof-c}
\int_{-\infty}^{+\infty} f(\lambda) \dd\lambda 
= 2\pi i{\rm Res} f(\sqrt{z}) - \int_{S_{\infty}} f(\lambda) \dd\lambda  
= -i\sqrt{z} j_{\ell}(\sqrt{z}r_<)  h_{\ell}(\sqrt{z}r_>) - \int_{S_{\infty}} f(\lambda) \dd\lambda.
\end{align}
From the asymptotic behaviors of the spherical Bessel and Hankel functions as $|x|\rightarrow +\infty$
\begin{eqnarray*}
h_{\ell}(x) \rightarrow (-i)^{\ell+1}~ \frac{e^{ix}}{x} \quad \text{and} \quad
j_{\ell}(x) \rightarrow ~i^{\ell} ~\frac{\sin x}{x},
\end{eqnarray*} 
we have $\displaystyle \lim_{|x|\rightarrow +\infty} \int_{S_{|x|}} f(\lambda)\dd \lambda=0$.
This together with \eqref{proof-a}, \eqref{appen-c-gl} and \eqref{proof-c} completes the proof of \eqref{free-green-angular-momentum}.
\end{proof}

At the end of this appendix, we provide a proof to show that the two definitions of the $t$-matrix \eqref{log-deriv} and \eqref{t_l} are equivalent.

\begin{proof}[Proof of the equivalence of  \eqref{log-deriv} and \eqref{t_l}]

First let $t_{\ell}$ be given by \eqref{t_l}, we have from \eqref{radial-eq} and \eqref{g_l-expansion} that
\begin{eqnarray}
\label{proof-d}
\chi_{\ell}(r;z)=\left\{
\begin{array}{ll}
j_{\ell}(\sqrt{z}r) + t_{\ell}(z) h_{\ell}(\sqrt{z}r) &\quad {\rm if}~r>R_0, \\[1ex]
\displaystyle
j_{\ell}(\sqrt{z}r) -i\sqrt{z} h_{\ell}(\sqrt{z}r) \int_{0}^{r} j_{\ell}(\sqrt{z}r') V_0(r') \chi_{\ell}(r';z) r'^2 \dd r'  \\[1ex]
\displaystyle
\qquad\qquad -i\sqrt{z}  j_{\ell}(\sqrt{z}r) \int_{r}^{R_0}  h_{\ell}(\sqrt{z}r') V_0(r') \chi_{\ell}(r';z) r'^2 \dd r'  &\quad {\rm if}~0<r\le R_0 .
\end{array}\right.
\end{eqnarray}
We observe from \eqref{proof-d} that 
\begin{align*}
& \chi_{\ell}(R_0;z) := \lim_{r\rightarrow R^{-}_{0}} \chi_{\ell}(r;z) 
= j_{\ell}(\sqrt{z}R_0) + t_{\ell}(z) h_{\ell}(\sqrt{z}R_0)
= \lim_{r\rightarrow R^{+}_{0}} \chi_{\ell}(r;z) 
\qquad\qquad{\rm and}
\\[1ex]
& \chi'_{\ell}(R_0;z) := \lim_{r\rightarrow R^{-}_{0}} \chi'_{\ell}(r;z) 
= \sqrt{z} j'_{\ell}(\sqrt{z}R_0) + \sqrt{z} t_{\ell}(z) h'_{\ell}(\sqrt{z}R_0)
= \lim_{r\rightarrow R^{+}_{0}} \chi'_{\ell}(r;z).
\end{align*}
The above two equations lead to
\begin{eqnarray*}
\sqrt{z} \chi_{\ell}(R_0;z) \left(  j'_{\ell}(\sqrt{z}R_0) + t_{\ell}(z) h'_{\ell}(\sqrt{z}R_0) \right)
= \chi'_{\ell}(R_0;z) \left( j_{\ell}(\sqrt{z}R_0) + t_{\ell}(z) h_{\ell}(\sqrt{z}R_0) \right),
\end{eqnarray*}
which together with a direct calculation implies that $t_{\ell}$ equals \eqref{log-deriv}.

Conversely, if $t_{\ell}$ be given by \eqref{log-deriv}. 
One can derive that $t_{\ell}$ equals \eqref{t_l} by using \eqref{radial-eq} and similar arguments.
\end{proof}


\section{Formalism for non-spherical potentials}
\label{append-asym}
\renewcommand{\theequation}{C.\arabic{equation}}
\setcounter{equation}{0}

In this appendix, we will generalize the formalism to problems with non-spherically symmetric potentials. We will still require the potentials to be supported by non-overlapping atomic spheres.
Let us consider the single-site problem with $V_0$ in \eqref{single-schrodinger} given by
\begin{eqnarray}\nonumber
\label{non-spherical-potential}
V_{0}(\bm r)=\sum_{\ell m} V_{\ell m}(r) Y_{\ell m}(\bm{\hat{r}}),
\end{eqnarray}
with $V_{\ell m}(r)=0,~\forall~r>R_0$ and $\ell\in\N,~|m|\leq\ell$.
Compared to the problem with spherically symmetric potentials, the major difference is that an incoming wave cannot preserve its angular momentum after being scattered by the non-spherical potential $V_0$.
More precisely, if we consider the Lippmann-Schwinger equation \eqref{lipp-sch} with an incoming wave $\psi_0(\vr)=J_{\ell m}(\bm{r};z)$ for some $\ell\in\N$ and $|m|\le\ell$, then the corresponding outgoing wave can be written as
\begin{eqnarray}
\label{lipp-full}
\zeta_{\ell m}({\bm r};z) = J_{\ell m}(\bm{r};z) + \int_{\mathbb{R}^3} G_0({\bm r},{\bm r'};z) V_0(\bm r') \zeta_{\ell m}({\bm r'};z) \dd{\bm r'} \qquad
\bm{r}\in\mathbb{R}^3.
\end{eqnarray}
It is important to note that $\zeta_{\ell m}$ contains more angular momentum than $Y_{\ell m}$ since $V_0$ is not spherically symmetric, and the subscript of $\zeta_{\ell m}$ only indicates the angular momentum of its incoming wave.

Note that $\zeta_{\ell m}$ satisfies the Schr\"{o}dinger equation \eqref{schrodinger} for any $\ell\in\N$ and $|m|\le\ell$.
We can write $\zeta_{\ell m}$ in the angular momentum representation as
\begin{eqnarray}\nonumber
\zeta_{\ell m}(\bm{r};z) =\sum_{\ell' m'} \chi_{\ell' m',\ell m}(r;z) Y_{\ell' m'}(\bm{\hat{r}}),
\end{eqnarray}
where $\chi_{\ell' m',\ell m}$ satisfies a system of coupled equations in the atomic sphere ($r<R_0$) for any fixed $\ell\in\N$ and $|m|\le\ell$
\begin{eqnarray}
\label{radial-nonsym}
\sum_{\ell'm'} \left[ \left( -\frac{1}{r}\frac{\partial^2}{\partial r^2} r + \frac{\ell'(\ell'+1)}{r^2} -z \right) \delta_{\ell'\ell''} \delta_{m'm''} + \overline{V}_{\ell' m', \ell''m''}(r)  \right] \chi_{\ell'm',\ell m}(r;z) = 0
\end{eqnarray}
with the potential 
$\overline{V}_{\ell' m', \ell''m''}(r) = \sum_{\ell'''m'''} C_{\ell'm',\ell''m'',\ell''' m'''} V_{\ell'''m'''}(r)$.

Then we can rewrite \eqref{lipp-full} in the interstitial region by
\begin{eqnarray}\nonumber
\zeta_{\ell m}({\bm r};z) = J_{\ell m}(\bm{r};z) 
+ \sum_{\ell' m'} t_{\ell' m', \ell m}(z) H_{\ell' m'}(\bm{r};z) \qquad
{\rm for}~\vr\in\omeout,
\end{eqnarray}
where $t_{\ell' m', \ell m}(z)$ are matrix elements of the $t$-matrix (an $(L+1)^2\times(L+1)^2$ matrix)
\begin{align}
\label{t-matrix-nonspherical-}
\nonumber
t_{\ell' m', \ell m}(z) 
&= -i\sqrt{z} \int_{\mathbb{R}^3} J_{\ell' m'}({\bm r'};z) V_0(\bm r') \zeta_{\ell m}({\bm r'};z) \dd{\bm r'} 
\\[1ex] \nonumber
&= -i\sqrt{z}\int_{0}^{R_0} j_{\ell'}(\sqrt{z}r') \sum_{\ell''m''} \overline{V}_{\ell' m', \ell''m''}(r') \chi_{\ell'' m'',\ell m}(r';z) r'^2 \dd r'.
\end{align}
Compared to \eqref{t_l}, we see that the $t$-matrix for a non-spherical potential is no longer diagonal.

Let $a^n_{\ell m}$ and $b^n_{\ell m}$ be the coefficients as those in \eqref{single-wavefun}, then we have the following relationship which is similar to \eqref{multi-incoming-scattering}
\begin{eqnarray}
\label{full-in-sc-coef}
b^n_{\ell m} =  \sum_{\ell' m'} t^n_{\ell m,\ell'm'}(z) a^n_{\ell' m'} .
\end{eqnarray}
Combining \eqref{coef-in-sc} and \eqref{full-in-sc-coef} can give the equation for systems with non-spherical potentials
\begin{eqnarray}
\label{kkr-coeff-eq-full}
\nonumber
\sum_{n=1}^{N} \sum_{\ell m} \left( \delta_{nn'} \delta_{\ell\ell'} \delta_{mm'}-  \sum_{\ell''m''} t^n_{\ell''m'',\ell m}(E) g^{nn'}_{\ell''m'',\ell' m'}(E) \right)  a^{n}_{\ell m} =0 \qquad\qquad 
\\[1ex]
{\rm for}~ n' = 1,\cdots,N,~\ell'=0,1,\cdots,~{\rm and}~ |m'|\leq\ell'.
\end{eqnarray}
Then, for a given angular momentum cut-off $L$, we have the KKR secular equation ${\rm Det} \big( S(E_L) \big) = 0$,
where the matrix elements of $S(E_L)\in \mathbb{C}^{(N(L+1)^2)\times (N(L+1)^2)}$ are given by the coefficients of the system \eqref{kkr-coeff-eq-full}.

Once the approximate solution $E_L$ is obtained, the corresponding wave function can be given by
\begin{eqnarray}
\label{kkr-wave-full-eq}
u_{\lmax}(\bm r;E_{\lmax}) = \left\{
\begin{array}{ll}
\displaystyle \sum_{\ell\le \lmax} a^{n}_{\ell m}(E_{\lmax}) \zeta^{n}_{\ell m}(\bm{r}_{n};E_{\lmax})  
&\quad \bm{r}\in \omen~(n=1,\cdots, N), 
\\[1ex]
\displaystyle \sumn \sum_{\ell \le \lmax} b^{n'}_{\ell m}(E_{\lmax}) H_{\ell m}(\bm{r}_{n'};E_{\lmax}) 
&\quad \bm{r}\in \Omega^{\rm I},
\end{array}
\right.
\end{eqnarray}
where the coefficients $\pmb{a}:=\{a^{n}_{\ell m}(E_{\lmax})\}$ satisfy $S(E_L) \pmb{a} = 0$ and $\zeta^{n}_{\ell m}$ in each atomic sphere $\Omega_n$ is
\begin{eqnarray}
\nonumber
\zeta^n_{\ell m}(\bm{r}_{n};E_L) = \sum_{\ell'\le L} \chi^n_{\ell' m',\ell m}(r_{n};E_L) Y_{\ell' m'}(\bm{\hat{r}}_n) \qquad |\vr_n|<R_n,
\end{eqnarray}
with $\chi_{\ell' m',\ell m}^n$ the solution of \eqref{radial-nonsym} satisfying the boundary condition
\begin{eqnarray}\nonumber
\chi_{\ell' m',\ell m}^n(R_n;E_{\lmax}) = j_{\ell}(\sqrt{E_{\lmax}}R_n) \delta_{\ell\ell'} \delta_{mm'} + t^{n}_{\ell'm',\ell m}(E_{\lmax}) h_{\ell'}(\sqrt{E_{\lmax}}R_n).
\end{eqnarray}

A key observation is that the wave function $u_L$ given in \eqref{kkr-wave-full-eq} is weakly continuous through the atomic spherical surfaces $\Gamma_n~(n=1,\cdots,N)$, as well as its first derivative.
Then with the above formalism, all our analysis for problems with spherical potentials can be generalized directly to non-spherical problems.

\small
\bibliographystyle{plain}
\bibliography{bib}

\end{document}